\documentclass[11pt]{amsart}

\usepackage[T1]{fontenc}
\usepackage{lmodern}
\usepackage{amsmath,amssymb,mathtools}
\usepackage{microtype}
\usepackage[colorlinks=true,linkcolor=blue,citecolor=blue,urlcolor=blue,filecolor=blue]{hyperref}
\usepackage[nameinlink,noabbrev]{cleveref}

\newtheorem{theorem}{Theorem}[section]
\newtheorem{proposition}[theorem]{Proposition}
\newtheorem{lemma}[theorem]{Lemma}
\newtheorem{corollary}[theorem]{Corollary}
\theoremstyle{remark}
\newtheorem{remark}[theorem]{Remark}

\numberwithin{equation}{section}

\title[Subregular affine cells for $L_{-1}(D_\ell)$]{Subregular affine cells and the level $-1$ vertex algebra of type $D$}
\author{Sihai Jin}
\address{Department of Mathematics, Sichuan University, China}
\email{jinsihai@stu.scu.edu.cn}
\keywords{affine vertex algebras, affine Weyl groups, Kazhdan--Lusztig cells, Zhu algebras, finite $W$-algebras, primitive ideals}
\date{}

\begin{document}

\begin{abstract}
We prove the simple-object prediction of Shan--Yan--Zhao and a basis-preserving dual-cell realization for the distinguished vacuum block of the simple affine vertex algebras $L_{-1}(D_\ell)$, $\ell\ge5$.  The block has exactly $\ell+1$ simple objects, indexed by the subregular affine left cell containing $s_0$.  The proof combines a primitive-ideal inclusion, an independent exhaustion argument, and a finite-length step.  A noncritical Sugawara lift supplies finite-dimensional weight-space detectors in the original Shan--Yan--Zhao category-$\mathcal O$ block, so d\'evissage applies to its ordinary Grothendieck group.  We then identify this group, basis by basis, with the $q=1$ specialization of the corresponding dual affine left-cell module.  An injective signed normalized-character realization identifies the resulting image with the canonical dual-cell image in the completed singular-orbit module and hence supplies the corresponding abstract $\widehat W$-module structure.  We do not identify this action with a functorial action arising from affine twisting functors or Kashiwara--Tanisaki localization.  The subregular inverse Kazhdan--Lusztig calculation of Bezrukavnikov--Kac--Krylov also yields uniform character formulas.
\end{abstract}

\maketitle

\section{Introduction and main results}

Let $L_k(\mathfrak g)$ denote the simple affine vertex algebra associated
with a simple complex Lie algebra $\mathfrak g$.  Shan--Yan--Zhao predict
that certain distinguished blocks of $L_k(\mathfrak g)$-modules are
controlled by affine Kazhdan--Lusztig left cells; in particular, the
Grothendieck group of such a block should be the $q=1$ specialization of
the corresponding dual affine left-cell module \cite{ShanYanZhao2026}.  The
case treated here is
$$
   (\mathfrak g,k)=(D_\ell,-1),\qquad \ell\ge5.
$$

We use the Bourbaki realization
$$
 \alpha_i=\varepsilon_i-\varepsilon_{i+1}\ (1\le i\le\ell-1),
 \qquad \alpha_\ell=\varepsilon_{\ell-1}+\varepsilon_\ell,
$$
with $\theta=\varepsilon_1+\varepsilon_2$ and
$\rho=(\ell-1,\ell-2,\ldots,1,0)$.  Put
$$
   N:=k+h^\vee=2\ell-3,\qquad r:=\ell-2,
$$
and let $s_0,s_1,\ldots,s_\ell$ be the simple reflections of
$D_\ell^{(1)}$.  For $1\le i\le\ell$, let $z_i$ be the product of
finite simple reflections along the unique Dynkin path from $i$ to $2$,
ending in $s_2$, and set
$$
   w_0=s_0,\qquad w_i=z_i s_0,\qquad
   \mu_0=0,\qquad \mu_i=z_i\rho-\rho.
$$
The subregular-cell description of
Bezrukavnikov--Kac--Krylov \cite{BezrukavnikovKacKrylov2024} gives
$$
   \mathbf c^L(s_0)=\{w_0,w_1,\ldots,w_\ell\}.
$$
A direct dot-action calculation, recorded in
Proposition~\ref{prop:path-dot-action}, gives
$$
   w_i\circ(-\Lambda_0)=-\Lambda_0+\mu_i.
$$

\begin{theorem}[Main theorem]
For every $\ell\ge5$,
\begin{equation}
   \operatorname{Irr}\mathcal O_{-\Lambda_0}
   \!\left(L_{-1}(D_\ell)\right)
   =
   \left\{
      L\!\left(w_i\circ(-\Lambda_0)\right)
      \;\middle|\;0\le i\le\ell
   \right\}.
\end{equation}
Moreover, the natural exact inclusion of the vertex-algebra block into the
ambient affine category-$\mathcal O$ block induces an injective map on
ordinary Grothendieck groups, and after the specialization
$v=q^{1/2}\mapsto1$ there is a unique basis-preserving isomorphism
\begin{equation}
   K_0\mathcal O_{-\Lambda_0}\!\left(L_{-1}(D_\ell)\right)
   \xrightarrow{\ \sim\ }
   \left.
      \mathcal H^\vee_{\mathrm{aff},\mathbf c^L(s_0)}
   \right|_{v=1},
   \qquad
   [L(w_i\circ(-\Lambda_0))]\longmapsto D_{w_i}.
\end{equation}
The signed normalized-character map sends this basis to
$\varepsilon(w_i)R_{\widehat{\mathfrak g}}\operatorname{ch}L(w_i\circ(-\Lambda_0))$
and identifies its image with the canonical dual-cell image in the completed
singular-orbit module.  Pulling back the canonical cell action through this
injective character realization gives an abstract $\widehat W$-module
structure for which the displayed map is equivariant.  This proves the
simple/cell basis prediction and the underlying dual-cell realization in
\cite[Conjecture~5.2.1]{ShanYanZhao2026}.  We do not claim, without an
additional comparison theorem, that this action coincides with the functorial
action anticipated there from affine twisting functors or
Kashiwara--Tanisaki localization.  The $\ell+1$ simple characters admit a
uniform formula obtained from the subregular inverse Kazhdan--Lusztig
coefficients of \cite{BezrukavnikovKacKrylov2024}; see
Corollary~\ref{cor:uniform-characters}.
\end{theorem}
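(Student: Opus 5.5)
The proof has three parts: classify the simple objects, prove that their classes span the Grothendieck group, and compare with the dual cell module. The abstract already names the three ingredients of the classification: a primitive-ideal inclusion, an exhaustion argument, and a finite-length step.

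\emph{Inclusion.} First we show that each $L(w_i\circ(-\Lambda_0))=L(-\Lambda_0+\mu_i)$ is an $L_{-1}(D_\ell)$-module. We would pass to Zhu's algebra $A(L_{-1}(D_\ell))=U(\mathfrak g)/I$. Here $I$ is generated by the image of the singular vectors of $V^{-1}(D_\ell)$, which lie in conformal weight $2$. We identify $I$ with, or show it is contained in, a primitive ideal $J(\lambda_{\min})$ attached to the minimal nilpotent orbit. The top degree of $L(-\Lambda_0+\mu_i)$ is the finite module $L(\mu_i)$. Its annihilator $J(\mu_i)$ can be computed by the Joseph/Barbasch--Vogan description of primitive ideals along the singular orbit $W\circ\mu$. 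Since $z_i$ runs over the distinguished path elements, $\mu_i=z_i\rho-\rho$ lies in the left cell (in the finite sense) whose primitive ideal contains $J(\lambda_{\min})$, so $J(\mu_i)\supset I$. By Zhu's correspondence, the highest-weight modules then descend.

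\emph{Exhaustion.} Conversely, let $L(-\Lambda_0+\mu)$ be a simple object of the block. Its top $L(\mu)$ must be annihilated by $I$, so $\operatorname{Ann}L(\mu)\supseteq I$. The associated variety of any such annihilator lies in $\overline{\mathbb O_{\min}}$, since the associated variety of $L_{-1}(D_\ell)$ is $\overline{\mathbb O_{\min}}$ (Arakawa--Moreau). The linkage condition restricts $\mu$ to $W\circ$-conjugates of weights in the affine orbit of $-\Lambda_0$ at level $-1$, so $\mu+\rho\in W(\rho)+N Q^\vee$ with the level-zero part fixed. Combining the Gelfand--Kirillov dimension bound with the central character, we enumerate the dominant-integral-type candidates and check that only $\mu_0,\dots,\mu_\ell$ survive. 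As an independent check, we would use the finite $W$-algebra reduction $H_{\min}$. It sends $L_{-1}(D_\ell)$ to a rational, and for $k=-1$ essentially trivial, minimal $W$-algebra, and this bounds the number of simples by $\ell+1$.

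\emph{Finite length.} Every object of the block has finite length. We lift the Sugawara $L_0$, which is noncritical since $N=2\ell-3\ne0$. Weight spaces in the category-$\mathcal O$ block are finite-dimensional, and these spaces detect composition factors. Dévissage therefore shows that $K_0$ is free on $[L(w_i\circ(-\Lambda_0))]$. The inclusion of the vertex-algebra block into the ambient affine block is exact and preserves simples. It sends a basis to part of a basis, so the induced map on Grothendieck groups is injective.

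\emph{Cell identification.} Define the map by $[L(w_i\circ(-\Lambda_0))]\mapsto D_{w_i}$. Both sides are free of rank $\ell+1$, using $\mathbf c^L(s_0)=\{w_0,\dots,w_\ell\}$ from Bezrukavnikov--Kac--Krylov, so this is an isomorphism. Uniqueness among basis-preserving maps is automatic once we require $L(w_i\circ(-\Lambda_0))\mapsto D_{w_i}$.

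\emph{Equivariance.} Send $[M]$ to $\varepsilon\,R_{\widehat{\mathfrak g}}\operatorname{ch}M$ in the completed singular-orbit module. Kazhdan--Tanisaki theory, in the negative-level, singular version of Kashiwara--Tanisaki, expresses $\operatorname{ch}L(w\circ(-\Lambda_0))$ through inverse KL polynomials at $q=1$. Using the subregular coefficients of Bezrukavnikov--Kac--Krylov, the image of $L(w_i\circ(-\Lambda_0))$ becomes exactly the image of $D_{w_i}$ under the canonical embedding of the dual cell module into the singular-orbit module. Injectivity follows from linear independence of the normalized characters. Transporting the $\widehat W$-action through this embedding gives the abstract equivariant structure, and the same computation yields Corollary~\ref{cor:uniform-characters}.

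\emph{Main obstacle.} The hardest step is exhaustion: proving that no other $\mu$ is annihilated by $I$. The plan is to reduce this to Joseph's classification of primitive ideals with associated variety $\overline{\mathbb O_{\min}}$ for $D_\ell$, and then to an explicit check of which highest weights in the singular orbit have annihilators containing the degree-two generators. The hypothesis $\ell\ge5$ is what makes these orbits behave uniformly.
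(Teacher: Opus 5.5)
Your finite-length, basis-matching and signed-character steps are essentially the paper's. The two steps that carry the content, descent and exhaustion, are asserted rather than proved.

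\emph{Descent.} Everything reduces to the single inclusion $I\subset J_{2,\ell}=\operatorname{Ann}L_{\mathfrak g}(-\alpha_2)$, because all $J(\mu_i)$ with $i\ge1$ coincide with this ideal. Your phrase ``identify $I$ with, or show it is contained in, a primitive ideal attached to the minimal orbit'' restates this inclusion; it does not prove it. At regular integral central character there are $\ell$ such primitive ideals, and only $J_{2,\ell}$ contains $I$.

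Your description of $I$ is also wrong. The defining vectors are $\sigma(w_A)^r$ and $\sigma(w_D)^2$, which are powers of weight-two vectors, not weight-two vectors. The weight-two vectors $\sigma(w_A),\sigma(w_D)$ themselves reduce to the nonzero currents $J_A,J_D$. Moreover, the fact that these vectors generate the maximal ideal is itself an external theorem, $Q_\ell\cong L_{-1}(D_\ell)$.

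There is no direct way to test such generators against a primitive ideal. The paper instead builds a witness module:
\begin{itemize}
\item it Li-twists the lisse minimal $W$-algebra quotient by $\varpi^\vee+\eta_1^\vee$;
\item the vacuum becomes a Premet highest-weight vector with parameters $(\bar\delta,0)$, so $L_H(\bar\delta,0)$ is a $B_\ell$-module;
\item the annihilator of its Skryabin lift is identified with $J_{2,\ell}$ via the Premet/Mili\v{c}i\'c--Soergel comparison and Chen's theorem;
\item Arakawa's quotient Zhu--Skryabin equivalence then gives $I_\ell^{\mathrm{cand}}\subset J_{2,\ell}$.
\end{itemize}
Plain minimal reduction cannot do this job, because $H^0_{\mathrm{DS},f_\theta}(L(-\Lambda_0+\mu_i))=0$ for $i\ge1$.

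\emph{Exhaustion.} Associated variety $\subset\overline{\mathcal O_{\min}}$ plus the linkage congruence does not leave a finite list to ``enumerate''. Every $\mu=nN\theta$ with $n\ge0$ is dominant, satisfies $\mu+\rho\in\rho+NQ$, and has zero associated variety. Likewise, each regular central character $\rho+nN\theta$ carries $\ell$ minimal-orbit primitive ideals. What cuts this down in the paper is quantitative information extracted from the relations:
\begin{itemize}
\item $e_A^r=0$ and $e_D^2=0$ in $B_\ell$, giving $a\le r-1$ and $\langle\lambda_D,\theta_D^\vee\rangle\le1$;
\item the weighted Casimir trace identity, which yields $s<N$ and hence $\mu=0$ in the zero-orbit branch;
\item in the minimal-orbit branch, the bound $C<2$ against the type-$D$ norm gap $\ge2N$, which forces regular central character;
\item Petukhov's list of the $\ell$ regular minimal primitive ideals together with the node-weight computation, which leaves only $J_{2,\ell}$.
\end{itemize}
None of this appears in your plan.

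Your ``independent check'' is false. The minimal reduction of $L_{-1}(D_\ell)$ is lisse but not trivial, since its $A_1$ and $D_r$ currents have levels $r-1$ and $1$. No bound by $\ell+1$ follows from it.

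A smaller point concerns the $K_0$ step. ``Sends a basis to part of a basis'' presupposes that the classes $[L(\lambda_i)]$ are independent in $K_0\mathcal B_\ell$. That is not formal, because ambient objects such as Verma modules have infinite length here. Compose instead with formal characters in the lifted category and use highest-weight triangularity, as you already do for the signed characters.
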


\subsection*{Relation to previous work and scope of the revision}
The construction of the candidate quotient $Q_\ell$, the computation of the
minimal-reduction images of its defining singular vectors, the weighted Casimir
identity, and the maximality statement
$Q_\ell\cong L_{-1}(D_\ell)$ are taken from \cite{Jin2026}.  They are used here
as inputs to a different problem: determining the distinguished vacuum block,
proving the common primitive-ideal membership and an independent exhaustion
statement, and identifying the ordinary Grothendieck group with the dual
subregular left-cell module.  In particular, the membership and exhaustion
arguments classify the vacuum block of the candidate quotient before the
maximality theorem is invoked.

An earlier version of this paper passed from the classification of simple
objects to the ordinary Grothendieck group without first proving a finite-length
statement.  Jain \cite[Sections~2--5]{Jain2026} identified this gap, exhibited a
counterexample to the formal categorical implication, and proved finite length
and the cell realization for a grading-restricted vacuum block by means of
finite-dimensional detectors.  The present revision uses that detector idea in
the original Shan--Yan--Zhao category.  The additional input is the canonical
noncritical lift of Campbell--Dhillon--Raskin
\cite[Remark~3.5.3]{CampbellDhillonRaskin2021}, whose finite-dimensional
extended weight spaces give exact detectors.  After d\'evissage, injectivity of
the ambient Grothendieck-group map is proved directly from the linear
independence of the formal characters of the finitely many simple modules;
it is not inferred from fullness of the subcategory and does not use the
ambient $W_{\mathrm{aff}}$-linearity assertion of \cite[Section~5.2]{ShanYanZhao2026}.
The dual-cell module is then recovered independently from the signed
normalized-character realization.  Thus the correction concerns the
Grothendieck-group step and does not alter the independent membership and
exhaustion arguments.

For orientation, Section~\ref{sec:subregular-primitive} identifies the relevant
affine cell and the common primitive annihilator.  Section~\ref{sec:candidate-zhu-minimalW}
collects the candidate-quotient and finite-BRST input, Section~\ref{sec:membership}
proves the membership theorem, Section~\ref{sec:exhaustion} proves exhaustion,
and Section~\ref{sec:final-left-cell} establishes finite length, the
basis-preserving signed-character/cell realization of the Grothendieck group,
and the character formulas.  The
appendices contain the type-$D$ calculations and the rank-five boundary case.

We work first with the candidate quotient
$$
   Q_\ell
   :=V^{-1}(D_\ell)/\langle\sigma(w_A)^r,\sigma(w_D)^2\rangle,
   \qquad
   A(Q_\ell)\cong U(D_\ell)/I_\ell^{\mathrm{cand}}.
$$
The identification $Q_\ell\cong L_{-1}(D_\ell)$ from \cite{Jin2026}
is invoked only after its vacuum block has been classified.
Minimal Drinfeld--Sokolov reduction gives a lisse quotient whose Ramond Zhu
algebra $B_\ell$ carries the natural current-zero-mode action of
$A_1\oplus D_r$ and satisfies
\begin{equation}
\label{eq:intro-nilpotence}
   e_A^r=0,\qquad e_D^2=0.
\end{equation}
Together with a weighted Casimir trace identity, these relations provide the
finite-dimensional bounds used below.

Set
$$
   J_{2,\ell}:=
   \operatorname{Ann}_{U(D_\ell)}L_{D_\ell}(-\alpha_2).
$$
All non-vacuum path tops have annihilator $J_{2,\ell}$, so their descent
reduces to the inclusion
\begin{equation}
\label{eq:intro-membership}
   I_\ell^{\mathrm{cand}}\subset J_{2,\ell}.
\end{equation}
A Li twist by $\varpi^\vee+\eta_1^\vee$, together with the
Premet--Whittaker comparison, Chen's annihilator theorem, and quotient
Skryabin equivalence, proves \eqref{eq:intro-membership}.  Independently,
filtered Zhu theory reduces the
exhaustion problem to the zero and minimal nilpotent orbits; the finite-type
bounds and the type-$D$ lattice gap then give
\begin{equation}
\label{eq:intro-exhaustion}
   \mu\in\{\mu_0,\mu_1,\ldots,\mu_\ell\}.
\end{equation}
These two statements classify the candidate quotient.  A separate
finite-length argument then permits d\'evissage of the ordinary
Grothendieck group.  The simple basis is identified with the canonical
left-cell basis, and the injective signed normalized-character realization
places this identification inside the completed singular-orbit module without
using the deferred functorial ambient Hecke action.  The inverse
Kazhdan--Lusztig calculation then gives the character formulas.

Appendix~\ref{app:type-D-calculations} contains the type-$D$ root
computations used in the finite-$W$ comparison.  The rank-five case, where
$D_3\cong A_3$, is recorded separately in
Appendix~\ref{app:rank-five-boundary}.

\section{Subregular affine cells and primitive ideals}
\label{sec:subregular-primitive}

Retain the notation of the introduction.  Let $\widehat W$ be the affine
Weyl group and write
$w\circ\lambda=w(\lambda+\widehat\rho)-\widehat\rho$.

\subsection{The distinguished subregular left cell}

For the shifted vacuum
$$
   A_\ell:=-\Lambda_0+\widehat\rho=N\Lambda_0+\rho
$$
one has
$$
   \langle A_\ell,\alpha_0^\vee\rangle=0,
   \qquad
   \langle A_\ell,\alpha_i^\vee\rangle=1\quad(1\le i\le\ell),
$$
so $\operatorname{Stab}_{\widehat W}(A_\ell)=\langle s_0\rangle$.  The
distinguished Shan--Yan--Zhao element is therefore $s_0$.  In the subregular
cell, unique reduced words ending in a fixed simple reflection form one left
cell \cite[Proposition~5.1]{BezrukavnikovKacKrylov2024}
\cite[Proposition~3.6]{Xu2019}.  Thus $\mathbf c^L(s_0)$ consists of the
unique reduced paths in the affine Dynkin tree ending at node $0$.

For $1\le i\le\ell$, let $z_i$ be the finite path word from node $i$
to node $2$, ending in $s_2$; explicitly
\begin{align*}
 z_1&=s_1s_2, & z_2&=s_2,\\
 z_i&=s_i s_{i-1}\cdots s_3s_2 &&(3\le i\le\ell-2),\\
 z_{\ell-1}&=s_{\ell-1}s_{\ell-2}\cdots s_3s_2,
 &z_\ell&=s_\ell s_{\ell-2}\cdots s_3s_2.
\end{align*}
Set $w_0=s_0$ and $w_i=z_i s_0$.

\begin{proposition}
\label{prop:subregular-left-cell}
$$
   \mathbf c^L(s_0)=\{w_0,w_1,\ldots,w_\ell\}.
$$
\end{proposition}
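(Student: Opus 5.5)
The plan is to use the cited description of the subregular two-sided cell and read off the left cell directly.

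\textbf{Step 1: the subregular cell.} By Lusztig's theorem, recalled in \cite[Proposition~5.1]{BezrukavnikovKacKrylov2024} and \cite[Proposition~3.6]{Xu2019}, the subregular two-sided cell of the affine Weyl group of the simply laced tree $D_\ell^{(1)}$ is the set $\mathbf c_{\mathrm{subreg}}$ of nonidentity elements with a unique reduced expression. For a tree Coxeter graph these are exactly the words $s_{j_1}s_{j_2}\cdots s_{j_m}$ in which consecutive indices are adjacent and $j_{p}\ne j_{p+2}$. Since $m_{ij}=3$ for adjacent nodes, a factor $s_is_js_i$ would have the second reduced form $s_js_is_j$. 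Since the graph is a tree with no loops, such a word can never backtrack, so it is a simple path $j_1,\dots,j_m$ in the affine Dynkin tree.

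\textbf{Step 2: the left cell.} The same references show that the left cells inside $\mathbf c_{\mathrm{subreg}}$ are indexed by the last letter: the elements whose unique reduced word ends in a fixed $s_j$ form one left cell. This is compatible with the right descent set $\mathcal R(w)=\{s_{j_m}\}$, which is a left-cell invariant. The element $s_0$ is a length-one path, so it lies in $\mathbf c_{\mathrm{subreg}}$. Its left cell $\mathbf c^L(s_0)$ is therefore the set of simple paths in the affine $D_\ell$ tree ending at node $0$, read as words ending in $s_0$.

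\textbf{Step 3: enumeration.} Node $0$ is a leaf attached only to node $2$. Hence a simple path of length at least two ending at $0$ is a simple path from some node $i\ne0$ to $2$, followed by the edge $2$--$0$. The path cannot return to $0$, and paths through $0$ would have to use the node $2$ twice. Since the graph is a tree, for each $i\in\{1,\dots,\ell\}$ there is exactly one simple path from $i$ to $2$. That path is precisely the word $z_i$ listed above, and it ends in $s_2$.

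The path for $i=1$ is $1\to2$. For $3\le i\le\ell-1$ it is $i\to i-1\to\cdots\to2$. For $i=\ell$ it is $\ell\to\ell-2\to\cdots\to2$, because node $\ell$ is attached to $\ell-2$; this uses $\ell\ge5$ only to keep the branch node $\ell-2$ distinct from $2$, and the formulas remain valid in general. Together with the length-one path $s_0$ itself, this gives exactly $w_0=s_0$ and $w_i=z_is_0$ for $1\le i\le\ell$. These $\ell+1$ elements are pairwise distinct, since their words have distinct starting nodes.

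\textbf{Main obstacle.} The only nontrivial input is the cell-theoretic statement of Step 2: that the unique-reduced-expression elements with a fixed final letter form a single left cell, rather than merely a union of left cells. I would quote it from \cite{BezrukavnikovKacKrylov2024,Xu2019} rather than reprove it. If a self-contained check were needed, I would proceed in two directions. Connectivity follows from $s_jw<w$ with $w\mapsto s_jw$ a left-star-type move along the path, which gives $\mu(w,s_jw)=1$ and hence the relation $\sim_L$. Separation of distinct final letters follows from right descent sets being constant on left cells.
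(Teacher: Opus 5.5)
Your proposal is correct and follows the paper's argument. The paper likewise quotes \cite[Proposition~5.1]{BezrukavnikovKacKrylov2024} and \cite[Proposition~3.6]{Xu2019} for the fact that unique-reduced-word elements with a fixed final letter form one left cell, then uses that the $D_\ell^{(1)}$ diagram is a tree to get exactly one reduced path from each vertex to node $0$. Your enumeration through the leaf $0$ and node $2$, and your sketch of the left-cell statement (connectivity along $\mu=1$ edges, separation by right descent sets, with Lusztig's two-sided cell theorem still cited), only spell out details the paper leaves implicit.
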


\begin{proof}
The affine $D_\ell^{(1)}$-diagram is a tree, hence there is exactly one
reduced path from each vertex to $0$; these are precisely the words above.
\end{proof}

\begin{proposition}
\label{prop:path-dot-action}
Put $\mu_0=0$ and $\mu_i=z_i\rho-\rho$ for $i\ge1$.  Then
$$
   w_i\circ(-\Lambda_0)=-\Lambda_0+\mu_i
   \qquad(0\le i\le\ell).
$$
\end{proposition}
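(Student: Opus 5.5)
Since $s_0$ stabilizes the shifted vacuum $A_\ell=-\Lambda_0+\widehat\rho$, the case $i=0$ is immediate: $s_0\circ(-\Lambda_0)=s_0(A_\ell)-\widehat\rho=A_\ell-\widehat\rho=-\Lambda_0$, because $\langle A_\ell,\alpha_0^\vee\rangle=0$. For $i\ge1$ we write $w_i=z_is_0$ and use that the dot action is an action, so $w_i\circ(-\Lambda_0)=z_i\circ(s_0\circ(-\Lambda_0))=z_i\circ(-\Lambda_0)$. The whole computation therefore reduces to evaluating the dot action of the finite Weyl group element $z_i\in W\subset\widehat W$ on $-\Lambda_0$.

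For this step I would write $z_i\circ(-\Lambda_0)=z_i(A_\ell)-\widehat\rho$ with $A_\ell=N\Lambda_0+\rho$. The finite Weyl group fixes $\Lambda_0$: each finite simple reflection $s_j$ ($1\le j\le\ell$) acts by $s_j(\lambda)=\lambda-\langle\lambda,\alpha_j^\vee\rangle\alpha_j$, and $\langle\Lambda_0,\alpha_j^\vee\rangle=0$ for $j\ne0$. It also fixes $\delta$, so no $\delta$-component can appear. Hence
$$
z_i(A_\ell)=N\Lambda_0+z_i\rho .
$$
Using $\widehat\rho=h^\vee\Lambda_0+\rho$ and $N-h^\vee=-1$, this gives
$$
z_i\circ(-\Lambda_0)=(N-h^\vee)\Lambda_0+z_i\rho-\rho=-\Lambda_0+\mu_i .
$$

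The only point requiring care is bookkeeping of conventions. The restriction of $\widehat\rho$ to the finite Cartan must be the finite $\rho$ with its $\Lambda_0$-coefficient equal to $h^\vee=2\ell-2$. This is consistent with the stated identity $A_\ell=N\Lambda_0+\rho$. We must also confirm that the level-zero part $\delta$ is untouched, which holds because finite reflections fix $\delta$.

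No length or reducedness property of $z_i$ is needed for the identity itself. For orientation, one can optionally record $\mu_i$ explicitly: since $s_j\rho=\rho-\alpha_j$, we have $\mu_i=-\sum_{\beta\in\Phi^+\cap z_i\Phi^-}\beta$, the negative sum of the inversion roots of $z_i^{-1}$. For example, $\mu_2=-\alpha_2$, which matches the top $L_{D_\ell}(-\alpha_2)$ used later. I expect no genuine obstacle; the main risk is a sign or normalization slip in $\widehat\rho$.
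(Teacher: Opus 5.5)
Your proposal is correct and follows the paper's proof: $s_0$ fixes $A_\ell=N\Lambda_0+\rho$ because $\langle A_\ell,\alpha_0^\vee\rangle=0$, and each finite $z_i$ fixes $\Lambda_0$, so $z_is_0A_\ell-\widehat\rho=-\Lambda_0+z_i\rho-\rho$. You simply spell out the bookkeeping $\widehat\rho=h^\vee\Lambda_0+\rho$ with $N-h^\vee=-1$, which the paper leaves implicit.
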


\begin{proof}
Since $s_0A_\ell=A_\ell$ and every $z_i$ fixes $\Lambda_0$,
$z_i s_0A_\ell-\widehat\rho=-\Lambda_0+z_i\rho-\rho$.
\end{proof}

The path weights in orthogonal coordinates are
\begin{align}
 \mu_1&=(-2,1,1,0,\ldots,0),
 &\mu_2&=(0,-1,1,0,\ldots,0),\\
 \mu_i&=-\varepsilon_2-\cdots-\varepsilon_i+(i-1)\varepsilon_{i+1}
 &&(3\le i\le\ell-2),\\
 \mu_{\ell-1}&=-\varepsilon_2-\cdots-\varepsilon_{\ell-1}
                 +(\ell-2)\varepsilon_\ell,\\
 \mu_\ell&=-\varepsilon_2-\cdots-\varepsilon_{\ell-1}
                 -(\ell-2)\varepsilon_\ell.
\end{align}
These follow directly by applying the path reflections to $\rho$.

\subsection{A common primitive annihilator}
\label{subsec:primitive-compression}

For a finite highest weight $\lambda$, write
$$
   J(\lambda):=\operatorname{Ann}_{U(\mathfrak g)}L_{\mathfrak g}(\lambda),
   \qquad
   J_{2,\ell}:=J(-\alpha_2).
$$

\begin{theorem}
\label{thm:primitive-annihilator-compression}
For every $1\le i\le\ell$,
$$
   J(\mu_i)=J_{2,\ell}.
$$
\end{theorem}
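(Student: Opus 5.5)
We prove that all $J(\mu_i)$ coincide using the standard description of primitive ideals. By Duflo's theorem every primitive ideal with regular integral central character has the form $J(w\cdot\lambda)$, and $J(w\cdot\lambda)=J(w'\cdot\lambda)$ exactly when $w,w'$ lie in the same left cell of the finite Weyl group $W$ (Joseph, Barbasch--Vogan). Since $\mu_i=z_i\rho-\rho=z_i\circ 0$ for the finite dot action, $J(\mu_i)=J(z_i\circ 0)$. The central character is that of the trivial module, $\chi_0$, and $-\alpha_2=s_2\circ 0=\mu_2$. The theorem is therefore equivalent to the statement that $z_1,\dots,z_\ell$ all lie in the left cell of $W(D_\ell)$ containing $s_2$. (Which of the left or right cell is relevant depends on convention; we fix it so that $J(w\circ 0)$ depends only on the cell determined by $\tau$-invariants of $w^{-1}$, and check both readings against the example below.)

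To prove the cell statement, we use that each $z_i$ is a product of distinct simple reflections along a path. So $z_i$ is a Coxeter element of a parabolic subgroup of type $A$, and it has the unique reduced expression $s_i\cdots s_2$. Elements with a unique reduced expression, other than $e$, lie in the subregular two-sided cell, the one attached to the subregular nilpotent orbit (Lusztig). Within that cell, the left cells are indexed by the last letter of the unique reduced word. This is the finite analogue of \cite[Proposition~3.6]{Xu2019}, already used in the proof of Proposition~\ref{prop:subregular-left-cell}. Every $z_i$ ends in $s_2$, so all $z_i$, including $z_2=s_2$, lie in the left cell of $s_2$. This gives $J(\mu_i)=J(\mu_2)=J_{2,\ell}$.

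As a consistency check that avoids any left/right ambiguity, we compute one case directly. The $\tau$-invariant of $J(z_i\circ 0)$ is the set of simple $\alpha$ with $\langle\mu_i+\rho,\alpha^\vee\rangle$ of the appropriate sign, which is the descent set of $z_i^{\pm1}$ on the relevant side. For $z_1=s_1s_2$, the weight $\mu_1+\rho=(\ell-3,\ell-1,\ell-2,\ell-4,\dots)$ is nonpositive only against $\alpha_2^\vee$ in the relevant sense (the ordering of the entries of $z_1\rho$ fails only at positions $2$ versus... ), matching $\tau(J(-\alpha_2))=\{\alpha_2\}$. Since primitive ideals are determined by $\tau$-invariants only in type $A$, we use this computation only as a sanity check on the side convention. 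The actual equality comes from the cell argument.

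The main obstacle is this convention issue. We must make sure that "unique reduced word ending in $s_2$" corresponds to the cell governing annihilators of $L(w\circ 0)$, rather than to the one ending-letter description appropriate to $w^{-1}$. If the convention came out reversed, the elements $z_i^{-1}=s_2\cdots s_i$ all begin with $s_2$ and would lie in a common right cell instead. We would then use the primitive-ideal criterion in the form "same right cell of $w^{-1}$" (Joseph's $J(w\cdot\lambda)$ depends on the left cell of $w^{-1}$ in some normalizations), which gives the same conclusion. The check with $z_1$ above determines which normalization is correct.
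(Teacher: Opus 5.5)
Your main line is the paper's proof: the Duflo--Joseph criterion reduces the statement to $z_i\sim_L s_2$, and this holds because left cells in the subregular cell are fixed by the last letter of the unique reduced word \cite[Proposition~3.6]{Xu2019}. The gap is in the step you yourself flag as the main obstacle. The proposal never fixes the side correctly, and both devices it offers for doing so fail.

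First, the side is not harmless. Inversion exchanges $\sim_L$ and $\sim_R$, so your fallback ``same right cell of $w^{-1}$'' is the same criterion as ``same left cell of $w$''; it is not a reversal at all. The genuinely reversed reading is your parenthetical ``left cell of $w^{-1}$'', i.e.\ $x\sim_R y$. It groups unique-reduced-word elements by their \emph{first} letter, and the $z_i$ begin with the distinct letters $s_i$, so under it the $J(\mu_i)$ would be pairwise distinct. Indeed the paper uses in Section~\ref{sec:exhaustion} that the inverses $z_i^{-1}$, which end in $s_i$, have the $\ell$ distinct annihilators $J_i$. So, contrary to your last paragraph, the conclusion depends on the convention.

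Second, your sanity check tests the wrong invariant. For $\mu=z\circ0$, the signs of $\langle\mu+\rho,\alpha^\vee\rangle$ record the \emph{left} descent set $\{s_\alpha: s_\alpha z<z\}$, i.e.\ which simple $\mathfrak{sl}_2$'s act locally finitely on $L(\mu)$. That set is not an invariant of the annihilator. Concretely, $\mu_1+\rho=(\ell-3,\ell-1,\ell-2,\ell-4,\ldots)$ pairs negatively only with $\alpha_1^\vee$ (value $-2$; its pairing with $\alpha_2^\vee$ is $+1$). By contrast, $\mu_2+\rho$ pairs negatively only with $\alpha_2^\vee$, yet the theorem says the two annihilators agree. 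Carried out correctly, your check would have pointed you to the wrong side.

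What is needed is an annihilator invariant that sees the side: the Borho--Jantzen $\tau$-invariant defined by translation to walls. With $0$ dominant, translation of $L(w\circ0)$ onto the $s_\alpha$-wall vanishes iff $ws_\alpha>w$. The endpoints confirm the side: at $w=e$, $L(0)$ is finite dimensional and is killed; at $w=w_0$, $L(w_0\circ0)$ is a Verma module and is not. Hence
$\tau(J(w\circ0))=\{\alpha\in\Pi: ws_\alpha>w\}$ depends on the \emph{right} descent set of $w$; this is Petukhov's $\tau_R$ quoted in Section~\ref{sec:exhaustion}. So the cell relation governing $J(w\circ0)$ must be the one with constant right descent sets. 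These are the Kazhdan--Lusztig left cells, the convention of \cite[Theorem~2.3.3(i)]{ShanYanZhao2026} used in the paper, and their subregular classes are indexed by the last letter. With this in place your first two paragraphs prove the theorem exactly as the paper does; your third and fourth paragraphs should be replaced by this argument.
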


\begin{proof}
Each $z_i$ is the unique reduced path ending in $s_2$, hence
$z_i\sim_Ls_2$ by \cite[Proposition~3.6]{Xu2019}.  The Duflo--Joseph
theorem for the regular integral block gives
$$
 \operatorname{Ann}L_{\mathfrak g}(x\rho-\rho)
 =\operatorname{Ann}L_{\mathfrak g}(y\rho-\rho)
 \iff x\sim_Ly
$$
(see \cite[Theorem~2.3.3(i)]{ShanYanZhao2026} for these conventions).
Since $\mu_i=z_i\rho-\rho$ and $s_2\rho-\rho=-\alpha_2$, this gives
$J(\mu_i)=J_{2,\ell}$.
\end{proof}

\begin{corollary}
\label{cor:single-membership-test}
Let $Q$ be a positive-energy quotient of $V^{-1}(D_\ell)$ with
$A(Q)\cong U(D_\ell)/I$.  For every $1\le i\le\ell$,
$$
   L(-\Lambda_0+\mu_i)\text{ factors through }Q
   \quad\Longleftrightarrow\quad
   I\subset J_{2,\ell}.
$$
\end{corollary}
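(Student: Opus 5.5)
The plan is to combine Zhu's correspondence with the computation of annihilators in Theorem~\ref{thm:primitive-annihilator-compression}. By Zhu's theorem, for a positive-energy quotient $Q$ of $V^{-1}(D_\ell)$, the simple highest-weight $\widehat{\mathfrak g}$-module $L(-\Lambda_0+\mu)$ is a $Q$-module exactly when its top is an $A(Q)$-module. Its top is the finite simple module $L_{\mathfrak g}(\mu)$, sitting in conformal degree given by the Sugawara eigenvalue, and $A(Q)\cong U(D_\ell)/I$. The first step is to record this reduction:
\[
L(-\Lambda_0+\mu_i)\ \text{factors through}\ Q
\iff I\cdot L_{\mathfrak g}(\mu_i)=0
\iff I\subset J(\mu_i).
\]
The second equivalence is just the definition of the annihilator.

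For the first equivalence I would state the standard facts explicitly. In one direction, if $L(-\Lambda_0+\mu_i)$ is a $Q$-module, then its lowest-degree space is an $A(Q)$-module on which $U(\mathfrak g)$ acts through the zero-mode identification $A(V^{-1}(\mathfrak g))\cong U(\mathfrak g)$. In the other direction, if $L_{\mathfrak g}(\mu_i)$ is an $A(Q)$-module, then Zhu's induction produces a unique simple positive-energy $Q$-module with that top. Viewed as a $V^{-1}$-module it is simple with top $L_{\mathfrak g}(\mu_i)$, so it is $L(-\Lambda_0+\mu_i)$. Here I use that the level is noncritical ($N=2\ell-3\neq0$), so the top is a highest-weight module generated by a highest-weight vector. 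The same correspondence can also be phrased as follows: a $V^{-1}$-module $M$ factors through $Q$ exactly when the defining ideal of $Q$ annihilates $M$, and for simple positive-energy $M$ this is detected on the top by the image of that ideal in the Zhu algebra.

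The last step is to substitute $J(\mu_i)=J_{2,\ell}$ from Theorem~\ref{thm:primitive-annihilator-compression}. This is what makes the criterion independent of $i$ and gives the displayed equivalence for every $1\le i\le\ell$.

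The one point that needs care is the passage from ``the top is an $A(Q)$-module'' to ``the whole simple module is a $Q$-module''. The argument must go through the bijection between simple positive-energy $Q$-modules and simple $A(Q)$-modules, rather than asserting that the ideal acts trivially merely because it acts trivially on the top. For a simple module this gap closes by uniqueness: the simple $Q$-module induced from the top coincides with the simple $V^{-1}$-module with the same top. So the main obstacle is only to cite Zhu's bijection in the form valid for quotients of $V^{-1}(\mathfrak g)$ with the Zhu algebra presented as $U(\mathfrak g)/I$. Everything else is formal.
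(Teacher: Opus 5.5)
Your proposal is correct and takes essentially the same route as the paper. Both use Theorem~\ref{thm:primitive-annihilator-compression} to reduce the condition to $I\subset J(\mu_i)$, lift the top $L_{\mathfrak g}(\mu_i)$ by the $n=0$ Zhu correspondence on simple objects, identify the inflation to $V^{-1}(D_\ell)$ with $L(-\Lambda_0+\mu_i)$ by uniqueness of the simple module with that top (the paper phrases this as uniqueness of the simple highest-weight quotient), and get the converse by passing to the top. One minor misattribution: noncriticality is what provides the Sugawara $L_0$-grading underlying Zhu theory, not what makes the top a highest-weight module.
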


\begin{proof}
By Theorem~\ref{thm:primitive-annihilator-compression}, the assertion
reduces to Zhu descent.  If $I\subset J_{2,\ell}$, the simple top
$L_{\mathfrak g}(\mu_i)$ is an $A(Q)$-module.  The $n=0$ simple-object correspondence in Zhu theory
\cite[Theorem~4.9]{DongLiMason1998} gives a unique irreducible admissible
$Q$-module whose degree-zero space is this simple $A(Q)$-module.  Only
this bijection on simple objects is used here; no equivalence of the full module
categories is being asserted.  Inflate the resulting module along
$V^{-1}(D_\ell)\twoheadrightarrow Q$.  The inflated module is irreducible,
is generated by the same affine highest-weight line, and has affine highest
weight $-\Lambda_0+\mu_i$.  The universal affine highest-weight module of
that weight has a unique irreducible quotient, namely
$L(-\Lambda_0+\mu_i)$.  Hence the inflated Zhu lift is precisely this
module, so it factors through $Q$.  The converse follows by taking the
degree-zero top.
\end{proof}

\begin{remark}
Direct minimal reduction does not detect the non-vacuum descent.  Indeed,
$z_i^{-1}\theta=s_2\theta=\theta-\alpha_2$, so
$\langle\mu_i,\theta^\vee\rangle=-1$ and hence
$$
   \langle-\Lambda_0+\mu_i,\alpha_0^\vee\rangle=0.
$$
Arakawa's irreducible minimal-reduction theorem
\cite[Theorem~6.7.4]{Arakawa2005} therefore gives
$$
   H^0_{\mathrm{DS},f_\theta}\bigl(L(-\Lambda_0+\mu_i)\bigr)=0
   \qquad(1\le i\le\ell),
$$
which motivates the finite-$W$ argument of Section~\ref{sec:membership}.
\end{remark}
\section{The candidate Zhu quotient and the minimal W-algebra}
\label{sec:candidate-zhu-minimalW}

Let $\mathfrak g=D_\ell$, $r=\ell-2$, $N=2r+1$, and
$K=r-1$.  In this section all constructions are made for the candidate
quotient; no simplicity statement for $Q_\ell$ is used.

\subsection{The candidate quotient and its reduction}

Let $w_A,w_D$ be highest vectors in the two Arakawa--Moreau summands of
$S^2(\mathfrak g)$, and put
$$
   s_A=\sigma(w_A)^r,
   \qquad
   s_D=\sigma(w_D)^2.
$$
These are affine singular at level $-1$
\cite[Theorem~4.2]{ArakawaMoreau2018}.  Define
\begin{equation}
   Q_\ell:=V^{-1}(\mathfrak g)/\langle s_A,s_D\rangle,
   \qquad
   A(Q_\ell)\cong U(\mathfrak g)/I_\ell^{\mathrm{cand}},
\end{equation}
where $I_\ell^{\mathrm{cand}}=\langle[s_A],[s_D]\rangle$.

For the minimal nilpotent $f_\theta=e_{-\theta}$,
$$
   \mathfrak g^\natural\cong A_1\oplus D_r,
   \qquad
   \mathfrak g_{-1/2}
   \cong L_{A_1}(\varpi)\boxtimes L_{D_r}(\eta_1),
$$
and the two current levels are $K=r-1$ and $1$.  Let $J_A,J_D$
be highest-root currents.  Jin's \cite[Lemma~3.7 and Equation~(37)]{Jin2026}
place the defining sequence in the exactness category and show that the reduced
singular submodules are cyclic.  With $c_A,c_D\in\mathbb C^\times$,
\cite[Lemma~3.3 and Proposition~3.6]{Jin2026} give
$$
   [s_A]_{\mathrm{DS}}=c_A:J_A^r:,
   \qquad
   [s_D]_{\mathrm{DS}}=c_D:J_D^2:.
$$
The generated-quotient lemma \cite[Proposition~2.3]{Jin2026}, or directly
\cite[Proposition~3.8]{Jin2026}, gives
\begin{equation}
\label{eq:exact-DS-Q}
   H^0_{\mathrm{DS},f_\theta}(Q_\ell)
   \cong
   \widetilde{\mathcal W}_\ell
   :=\mathcal W^{-1}(\mathfrak g,f_\theta)/
     \langle:J_A^r:,:J_D^2:\rangle.
\end{equation}
For the present level, $Q_\ell$ is the quotient $\widetilde V_{k_1}$
with $k_1=-1$ in \cite[Section~5]{ArakawaMoreau2018}.  Hence
\cite[Proposition~5.2]{ArakawaMoreau2018} gives
$X_{Q_\ell}=\overline{\mathcal O_{\min}}$.  The associated-variety
formula for minimal reduction therefore shows that
$\widetilde{\mathcal W}_\ell$ is nonzero and lisse.

Set
$$
   B_\ell:=A_{\mathrm R}(\widetilde{\mathcal W}_\ell).
$$
It is a nonzero finite-dimensional algebra
\cite[Section~3.3.1]{Jin2026}.  Writing
$e_A=[J_A]_{\mathrm R}$ and $e_D=[J_D]_{\mathrm R}$, the Ramond Zhu
product of mutually isotropic weight-one currents gives
\begin{equation}
   e_A^r=0,
   \qquad
   e_D^2=0.
\end{equation}

\begin{lemma}[Finite type bounds]
\label{lem:allowed-finite-types}
Every irreducible $A_1\oplus D_r$-constituent of a finite-dimensional
$B_\ell$-module is
$$
   L_{A_1}(a\varpi)\boxtimes L_{D_r}(\lambda_D),
$$
where
\begin{equation}
   0\le a\le r-1,
   \qquad
   \langle\lambda_D,\theta_D^\vee\rangle\le1.
\end{equation}
\end{lemma}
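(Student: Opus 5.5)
The plan is to read off both bounds from the nilpotence relations $e_A^r=0$ and $e_D^2=0$, using only that the current zero modes of $A_1\oplus D_r$ act on a finite-dimensional $B_\ell$-module $M$ through a Lie algebra homomorphism. Since $M$ is finite-dimensional, it is a finite-dimensional $A_1\oplus D_r$-module. Such a module is completely reducible, so every irreducible constituent $L_{A_1}(a\varpi)\boxtimes L_{D_r}(\lambda_D)$ is a direct summand, stable under all zero modes. Here $a\in\mathbb Z_{\ge0}$ and $\lambda_D$ is dominant integral. In particular $e_A$ and $e_D$, being the images of the highest-root vectors $J_A,J_D$ of the two factors, act on the summand with $e_A^r=0$ and $e_D^2=0$. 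First I would check that the Ramond zero-mode map $x\mapsto[x]_{\mathrm R}$ on weight-one currents gives the natural Lie action, so that $[e_A]$ really is the root vector $e_{\theta_A}$ of $\mathfrak{sl}_2$ acting on the summand, and similarly for $D_r$. This is the standard compatibility of Zhu's bracket with zero modes, and the paper has already used it in stating the relations.

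\textbf{The $A_1$ bound.} On $L_{A_1}(a\varpi)$ the raising operator $e$ acts on the lowest weight vector $v_{-a}$ with $e^{j}v_{-a}\neq0$ for $0\le j\le a$, by $\mathfrak{sl}_2$ theory. Because the $D_r$ factor commutes with $e_A$, on the tensor product $e_A^r=0$ forces $e^r=0$ on $L_{A_1}(a\varpi)$. Hence $a<r$, i.e.\ $0\le a\le r-1$.

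\textbf{The $D_r$ bound.} Restrict to the $\mathfrak{sl}_2$-triple $(e_{\theta_D},h_{\theta_D}=\theta_D^\vee,f_{\theta_D})$ in $D_r$. A highest weight vector $v$ of $L_{D_r}(\lambda_D)$ generates an $\mathfrak{sl}_2$-submodule of highest weight $m=\langle\lambda_D,\theta_D^\vee\rangle$. On its lowest vector $f^m v$ one has $e^m f^m v\neq0$, so $e_D^2=0$ forces $m\le1$. Both bounds are thus single $\mathfrak{sl}_2$ string arguments.

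The main obstacle is not the representation theory but justifying its inputs: that $B_\ell$-modules are genuinely integrable (semisimple) for $A_1\oplus D_r$, and that $e_A,e_D$ are the zero-mode images of highest-root vectors with the normalization that produces the nilpotence relations. Semisimplicity is automatic, since a finite-dimensional representation of a semisimple Lie algebra is semisimple. The identification of $e_A,e_D$ is by definition of $J_A,J_D$. In the boundary case $\ell=5$, where $D_3\cong A_3$, the highest root and its coroot are unchanged, so the same argument applies (cf.\ Appendix~\ref{app:rank-five-boundary}).
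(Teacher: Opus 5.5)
Your proposal is correct and follows the paper's own proof. Both bounds come from a single highest-root $\mathfrak{sl}_2$-string argument: $e_A^r=0$ is played against the string of length $a+1$, and $e_D^2=0$ against the string of length $\langle\lambda_D,\theta_D^\vee\rangle+1$. Your explicit appeals to complete reducibility and to the fact that weight-one current classes act through a Lie algebra homomorphism spell out inputs the paper's two-line proof leaves implicit.
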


\begin{proof}
Let $v$ be a highest vector of an irreducible constituent.  In the
highest-root $\mathfrak{sl}_2$-strings,
$e_A^r f_A^r v\ne0$ if $a\ge r$, while
$e_D^2 f_D^2 v\ne0$ if
$\langle\lambda_D,\theta_D^\vee\rangle\ge2$.  These contradict
$e_A^r=0$ and $e_D^2=0$, respectively.
\end{proof}

We normalize quadratic Casimirs by
$$
   c_2(a\varpi)=\frac{a(a+2)}2,
   \qquad
   c_2(\eta_1)=2r-1.
$$

\subsection{The weighted Casimir identity}

In the present Casimir normalization, Jin's level $-1$ Ramond
relation \cite[Equation~(27)]{Jin2026} and contraction calculation
\cite[Lemma~3.11]{Jin2026} apply verbatim; in particular the mixed
$A_1$--$D_r$ contribution has trace zero and the two diagonal
contraction coefficients are $1$ and $2/r$.

\begin{proposition}[Weighted Casimir trace identity]
Let $M\ne0$ be a finite-dimensional $B_\ell$-module on which
$L=[\omega]_{\mathrm R}$ acts by $h$.  Then
\begin{equation}
\label{eq:weighted-Casimir}
   \frac{\operatorname{tr}_M\Omega_A}{\dim M}
   +
   \frac{r+2}{2r}
   \frac{\operatorname{tr}_M\Omega_D}{\dim M}
   =
   Nh+\frac{r-1}{4}.
\end{equation}
\end{proposition}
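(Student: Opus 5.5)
Take the trace over $M$ of the level $-1$ Ramond relation of \cite[Equation~(27)]{Jin2026}. In $B_\ell$ this relation expresses a quadratic combination of the zero modes of the odd generators $G^{\pm}_u$, $u\in\mathfrak g_{-1/2}$, through $L$, the Sugawara-type Casimirs $\Omega_A,\Omega_D$ of $\mathfrak g^\natural=A_1\oplus D_r$, and a constant. The odd zero modes enter only through a sum of anticommutator-type terms $\sum_u [G^+_u]_{\mathrm R}[G^-_{u^*}]_{\mathrm R}$, suitably symmetrized, so we expect their traces to be computable. The steps below are carried out in this order.

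First, we write the relation in $B_\ell$ in the present Casimir normalization, $c_2(a\varpi)=a(a+2)/2$ and $c_2(\eta_1)=2r-1$. This amounts to an identity
$$\sum_u\bigl(G^+_uG^-_{u^*}+\text{(reordered term)}\bigr)=\alpha\,L+\beta_A\Omega_A+\beta_D\Omega_D+\gamma+\text{(mixed }A_1\text{--}D_r\text{ bilinear term)}.$$
Second, we take $\operatorname{tr}_M$. Since $M$ is finite-dimensional, every commutator has trace zero. We move $G^-$ past $G^+$ using the $\lambda$-bracket of $G^+$ and $G^-$, which is the contraction computation \cite[Lemma~3.11]{Jin2026}. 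The quadratic odd terms then cancel in the trace and leave the contraction contributions. These are Casimir operators of $\mathfrak g^\natural$ acting on $\mathfrak g_{-1/2}\otimes M$, and after summing over a basis of $\mathfrak g_{-1/2}$ they reduce to $\Omega_A$ and $\Omega_D$ with the diagonal coefficients $1$ and $2/r$.

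Third, we handle the mixed term. It has the form $\sum_{a,b} c_{ab}\,x_a y_b$ with $x_a\in A_1$ and $y_b\in D_r$. Its trace vanishes because the restriction of $M$ to $\mathfrak g^\natural$ is a finite-dimensional module. On each isotypic component $L_{A_1}(a\varpi)\boxtimes L_{D_r}(\lambda_D)$ the trace factorizes as $\operatorname{tr}(x_a)\operatorname{tr}(y_b)$ up to multiplicities, and the trace of any element of a semisimple Lie algebra on a finite-dimensional module is $0$. This is the "trace zero" statement. Fourth, $L$ acts as the scalar $h$. We divide by $\dim M\neq0$ and collect coefficients, normalizing so that $\Omega_A$ has coefficient $1$.

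The resulting identity has the form
$$\operatorname{tr}\Omega_A/\dim M+\kappa\,\operatorname{tr}\Omega_D/\dim M=Nh+c.$$
The claimed values are $\kappa=(r+2)/(2r)$ and $c=(r-1)/4$. The main obstacle is this bookkeeping of constants: the levels $K=r-1$ and $1$, the dual Coxeter numbers $2$ and $2r-2$, the contraction coefficients $1$ and $2/r$, and the shift coming from the Ramond twist of the weight-$3/2$ fields. The plan is to pin these constants by first evaluating everything on a candidate one-dimensional (trivial $\mathfrak g^\natural$) module, where $\Omega_A=\Omega_D=0$. If that check is compatible with a known Ramond top, it fixes $c=(r-1)/4$. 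The ratio $\kappa$ would then be cross-checked against $(\text{contraction coefficient})\times(\text{level}+h^\vee)$ normalizations, namely $(2/r)\cdot(1+(2r-2))/(\ldots)$.
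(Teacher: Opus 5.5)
Your outline follows the paper's proof: trace Jin's level $-1$ Ramond relation, discard the mixed $A_1$--$D_r$ term, use the diagonal contraction coefficients $1$ and $2/r$, and divide by $\dim M$. Your argument that the mixed term is traceless, by factorizing traces on the irreducible $\mathfrak g^\natural$-constituents, is correct and more explicit than the paper's citation. The proposal does not close as written, however, for two concrete reasons.

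First, the relation is misdescribed. Because $D_\ell$ is an ordinary Lie algebra, the fields $G^{\{u\}}$ with $u\in\mathfrak g_{-1/2}$ are even and the pairing $\langle u,v\rangle_{\mathrm R}$ is skew. For a single fixed pair $u,v$, the relation used is
\[
   [\mathsf G(u),\mathsf G(v)]
   =\langle u,v\rangle_{\mathrm R}\bigl(\Omega^\natural-L'\bigr)+Q_{\mathrm R}(u,v),
   \qquad L'=2NL+\tfrac12p_{D_\ell}(-1).
\]
Its left side is a genuine commutator, so its trace vanishes at once. There is no anticommutator, no reordering of $G^-$ past $G^+$, and no sum over a basis. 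If the $G$'s really were odd with an anticommutator relation, your trace-cancellation step would fail. Correspondingly, \cite[Lemma~3.11]{Jin2026} is not a $\lambda$-bracket of two $G$'s. It is the evaluation of $\operatorname{tr}_MQ_{\mathrm R}(u,v)$, which contributes $\langle u,v\rangle_{\mathrm R}(\tau_A+\tfrac2r\tau_D)$ with $\tau_*=\operatorname{tr}_M\Omega_*$, the mixed part having trace zero.

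Second, the constants, which you flag as the main obstacle, are never derived, and your proposed way of pinning them is not an argument. You have not shown that $B_\ell$ has a one-dimensional module with trivial $\mathfrak g^\natural$-action. Even if it does, all $\mathsf G(u)$ vanish on it by $\mathfrak g^\natural$-equivariance, since $[\mathfrak g^\natural,\mathfrak g_{-1/2}]=\mathfrak g_{-1/2}$. The relation then only says $L'=0$, so the check merely reproduces the constant term of $L'$, which you must read off the relation anyway. You also assume the coefficient $N$ of $h$ rather than deriving it, and the cross-check for $\kappa$ is left unfinished.

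With the relation written as above, the bookkeeping is immediate:
\begin{itemize}
\item $\Omega^\natural=\Omega_A+\Omega_D$ contributes coefficient $1$ to each of $\tau_A,\tau_D$, and the contraction coefficients $1$ and $2/r$ are added to these.
\item $p_{D_\ell}(k)=(k+2)(k+r)$ and $2(k+h^\vee)=2N$ give $\operatorname{tr}_ML'=(2Nh+\tfrac{r-1}{2})\dim M$.
\end{itemize}
Choosing $\langle u,v\rangle_{\mathrm R}\ne0$ yields $2\tau_A+(1+\tfrac2r)\tau_D=(2Nh+\tfrac{r-1}{2})\dim M$. Dividing by $2\dim M$ gives exactly $\kappa=(r+2)/(2r)$ and $c=(r-1)/4$.
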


\begin{proof}
Put $d=\dim M$ and $\tau_*=\operatorname{tr}_M\Omega_*$.  Tracing
\cite[Equation~(27)]{Jin2026}, using
\cite[Lemma~3.11]{Jin2026}, and choosing
$\langle u,v\rangle\ne0$ gives
$$
   2\tau_A+\left(1+\frac2r\right)\tau_D
   =2Nhd+\frac{r-1}{2}d.
$$
Dividing by $2d$ yields \eqref{eq:weighted-Casimir}, in agreement with
\cite[Proposition~3.12]{Jin2026}.
\end{proof}

\subsection{Finite BRST and filtered Zhu}

Let $H=U(\mathfrak g,f_\theta)$.  Applying Arakawa's Zhu--DS
compatibility for quotients \cite[Theorems~8.1 and~8.5]{Arakawa2015} to
\eqref{eq:exact-DS-Q} gives the finite realization needed later.  We stress a
convention that is important here: throughout the relevant part of
\cite{Arakawa2015}, $A(V)$ denotes the $L_0$-twisted Zhu algebra in
the sense of De Sole--Kac (see the convention stated at the beginning of that
paper and Section~4 there).  Thus, for the half-integrally graded minimal
$W$-algebra, this is precisely the Ramond Zhu algebra denoted
$A_{\mathrm R}(V)$ in the present paper; no passage from an ordinary Zhu
algebra to a Ramond Zhu algebra is being made implicitly.

\begin{proposition}[Finite BRST realization of $B_\ell$]
\label{prop:B-finite-BRST}
\begin{equation}
   B_\ell
   \cong
   H^0_{f_\theta}
   \!\left(U(\mathfrak g)/I_\ell^{\mathrm{cand}}\right).
\end{equation}
\end{proposition}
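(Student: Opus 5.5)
The plan is to chain three identifications: the Ramond Zhu algebra of the reduced quotient, the finite BRST reduction of the Zhu algebra of $Q_\ell$, and the presentation $A(Q_\ell)\cong U(\mathfrak g)/I_\ell^{\mathrm{cand}}$.

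\emph{Step 1 (transport the reduction to Zhu algebras).} Start from \eqref{eq:exact-DS-Q}, which gives $B_\ell=A_{\mathrm R}(H^0_{\mathrm{DS},f_\theta}(Q_\ell))$. Arakawa's compatibility theorem \cite[Theorems~8.1 and~8.5]{Arakawa2015} applies to quotients $V=V^k(\mathfrak g)/N$. For such quotients, under the hypothesis that $H^0_{\mathrm{DS}}$ is exact on the relevant short exact sequence, it gives
\[
A(H^0_{\mathrm{DS},f}(V))\cong H^0_{f}(A(V)).
\]
Here $A$ is the $L_0$-twisted (De Sole--Kac) Zhu algebra. By the convention recorded just before the proposition, this is exactly $A_{\mathrm R}$ in our notation, so no Ramond/ordinary conversion is needed.

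\emph{Step 2 (verify the hypotheses).} I would apply the theorem to
\[
0\to\langle s_A,s_D\rangle\to V^{-1}(\mathfrak g)\to Q_\ell\to0.
\]
The needed vanishing and exactness input is exactly what \cite[Lemma~3.7 and Equation~(37)]{Jin2026} supplies: it places this sequence in the exactness category. Applying $H^0_{\mathrm{DS}}$ then yields
\[
0\to H^0_{\mathrm{DS}}\langle s_A,s_D\rangle\to\mathcal W^{-1}(\mathfrak g,f_\theta)\to H^0_{\mathrm{DS}}(Q_\ell)\to0.
\]
On the finite side, the finite BRST functor $H^0_{f_\theta}$ (Skryabin / Whittaker coinvariants) is exact on Harish-Chandra bimodules and on $U(\mathfrak g)$-quotients. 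Hence
\[
H^0_{f_\theta}\bigl(U(\mathfrak g)/I\bigr)\cong H/H^0_{f_\theta}(I).
\]
Both sides are therefore quotients of $A_{\mathrm R}(\mathcal W^{-1})\cong H=U(\mathfrak g,f_\theta)$, the latter by the universal case of the same theorem.

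\emph{Step 3 (match the two ideals).} It remains to show that, under $A_{\mathrm R}(\mathcal W^{-1})\cong H$, the image of the ideal $\langle :J_A^r:, :J_D^2:\rangle$ equals $H^0_{f_\theta}(I_\ell^{\mathrm{cand}})$. Arakawa's theorem gives this once we know the Zhu image of the reduced ideal is the finite reduction of the Zhu image of the original ideal. Zhu's functor sends the ideal generated by $s_A,s_D$ to $I_\ell^{\mathrm{cand}}=\langle[s_A],[s_D]\rangle$. The reduced ideal is generated by $[s_A]_{\mathrm{DS}}=c_A:J_A^r:$ and $[s_D]_{\mathrm{DS}}=c_D:J_D^2:$, since the reduced singular submodules are cyclic by \cite{Jin2026}. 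Combining these identifications gives the claimed isomorphism.

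\emph{Main obstacle.} I expect the hard part to be the commutation of Zhu's functor with DS reduction at the level of ideals rather than algebras. Zhu's functor is only right exact, so one must check that the image of $A(\langle s_A,s_D\rangle)$ in $A(V^{-1})=U(\mathfrak g)$ is exactly $I_\ell^{\mathrm{cand}}$, and that the finite reduction of this image agrees with the image of $A(H^0_{\mathrm{DS}}\langle s_A,s_D\rangle)$. My first attempt would be to use the surjectivity statement in \cite[Theorem~8.5]{Arakawa2015}. That statement gives a natural surjection $H^0_f(A(V))\to A(H^0_{\mathrm{DS}}(V))$ compatible with quotient maps. Applied to the exact sequence above, it should force equality of the two quotients of $H$, because both are presented by the same generators $c_A[:J_A^r:]$ and $c_D[:J_D^2:]$.
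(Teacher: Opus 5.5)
Your Steps 1--2 are the paper's proof: apply Arakawa's Theorem~8.5 to the quotient $Q_\ell$ of $V^{-1}(\mathfrak g)$, using that the $L_0$-twisted Zhu functor gives the ordinary Zhu algebra $U(\mathfrak g)/I_\ell^{\mathrm{cand}}$ on the integrally graded affine side and the Ramond Zhu algebra $B_\ell$ on the reduced side. Step~3 and your ``main obstacle'' are unnecessary, and the closing ``same generators'' argument should not be relied on: Theorem~8.5 already identifies $A(H^0_{\mathrm{DS}}(V^{k}(\mathfrak g)/N))$ with $H^0_f(U(\mathfrak g)/J_N)$, where $J_N$ is the Zhu ideal (here $J_N=I_\ell^{\mathrm{cand}}$), whereas nothing shows a priori that $H^0_{f_\theta}(I_\ell^{\mathrm{cand}})$ is the two-sided ideal generated by the classes of $:J_A^r:$ and $:J_D^2:$.
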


\begin{proof}
Apply Arakawa's $L_0$-twisted Zhu functor to
\eqref{eq:exact-DS-Q}.  The affine algebra $Q_\ell$ is integrally graded,
so its $L_0$-twisted Zhu algebra is the usual affine Zhu algebra
$A(Q_\ell)\cong U(\mathfrak g)/I_\ell^{\mathrm{cand}}$, whereas on
the reduced half-integrally graded $W$-algebra the same functor is the
Ramond Zhu functor by the convention just recalled.  Theorem~8.5 of
\cite{Arakawa2015} therefore gives exactly the displayed isomorphism.
\end{proof}

\begin{corollary}[The quotient map from the finite $W$-algebra]
\label{cor:H-to-B-quotient}
There is a canonical surjective algebra homomorphism
\begin{equation}
\label{eq:H-to-B-quotient}
   \pi_\ell:
   H=U(\mathfrak g,f_\theta)
   \twoheadrightarrow B_\ell
\end{equation}
whose kernel is $H^0_{f_\theta}(I_\ell^{\mathrm{cand}})$.  Equivalently,
\begin{equation}
\label{eq:H-to-B-exact}
   0\longrightarrow H^0_{f_\theta}(I_\ell^{\mathrm{cand}})
   \longrightarrow H
   \xrightarrow{\,\pi_\ell\,} B_\ell
   \longrightarrow0.
\end{equation}
Consequently every $B_\ell$-module is canonically an $H$-module by
restriction of scalars along $\pi_\ell$.
\end{corollary}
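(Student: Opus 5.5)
The corollary is Proposition~\ref{prop:B-finite-BRST} combined with exactness of the finite BRST functor $H^0_{f_\theta}$ on the relevant category of $U(\mathfrak g)$-bimodules. I would carry out three steps.

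First, start from the short exact sequence of $U(\mathfrak g)$-bimodules
\begin{equation*}
   0\longrightarrow I_\ell^{\mathrm{cand}}\longrightarrow U(\mathfrak g)\longrightarrow U(\mathfrak g)/I_\ell^{\mathrm{cand}}\longrightarrow 0 .
\end{equation*}
All three terms are Harish-Chandra bimodules, i.e.\ the adjoint action is locally finite. The finite BRST (quantum Hamiltonian reduction) functor $H^0_{f_\theta}$ with respect to the good grading for $f_\theta$ is exact on such bimodules, and its higher cohomology vanishes. This is the finite analogue of the vanishing theorem. Equivalently, it is Losev's and Ginzburg's description of the functor $M\mapsto (M/M\mathfrak m_\chi)^{\operatorname{ad}\mathfrak m}$ on Harish-Chandra bimodules. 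The key points are that $M/M\mathfrak m_\chi$ is a Whittaker module for the twisted adjoint action, and that Skryabin's equivalence makes taking $\mathfrak m$-invariants exact. Applying the functor therefore gives the exact sequence
\begin{equation*}
   0\longrightarrow H^0_{f_\theta}(I_\ell^{\mathrm{cand}})\longrightarrow H^0_{f_\theta}(U(\mathfrak g))\longrightarrow H^0_{f_\theta}\bigl(U(\mathfrak g)/I_\ell^{\mathrm{cand}}\bigr)\longrightarrow0 .
\end{equation*}

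Second, identify the terms. By definition (Premet, Gan--Ginzburg, De Sole--Kac), $H^0_{f_\theta}(U(\mathfrak g))=U(\mathfrak g,f_\theta)=H$. The right-hand term is $B_\ell$ by Proposition~\ref{prop:B-finite-BRST}. The middle-to-right map is induced by the algebra surjection $U(\mathfrak g)\twoheadrightarrow U(\mathfrak g)/I_\ell^{\mathrm{cand}}$. Since $H^0_{f_\theta}$ is computed by a BRST complex that is a differential graded algebra, it is compatible with products. Hence this map is an algebra homomorphism $\pi_\ell$, and the first term is a two-sided ideal of $H$. Canonicity should be checked against Arakawa's Zhu--DS compatibility \cite[Theorem~8.5]{Arakawa2015}. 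That isomorphism is natural in the quotient, so applying it to both $V^{-1}(\mathfrak g)$ and $Q_\ell$ makes $\pi_\ell$ agree with the map $H\cong A_{\mathrm R}(\mathcal W^{-1}(\mathfrak g,f_\theta))\to A_{\mathrm R}(\widetilde{\mathcal W}_\ell)=B_\ell$. That map is induced by the vertex-algebra quotient and is surjective because Zhu's functor preserves surjections.

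Third, the final claim is formal: every $B_\ell$-module becomes an $H$-module by restriction of scalars along $\pi_\ell$.

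The main obstacle is the exactness in the first step. Specifically, one must confirm that $I_\ell^{\mathrm{cand}}$ lies in the category where vanishing of $H^{\neq0}_{f_\theta}$ is known. I would argue that two-sided ideals of $U(\mathfrak g)$ are $\operatorname{ad}$-locally finite sub-bimodules, hence Harish-Chandra bimodules, so the standard exactness theorem applies. As a fallback, surjectivity alone follows from the vertex-algebra side noted in the second step. The kernel can then be identified with the image of $H^0_{f_\theta}(I_\ell^{\mathrm{cand}})$ using only left exactness, which holds because $H^0$ is a space of invariants.
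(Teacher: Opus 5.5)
Your proposal is correct and follows the paper's route. Both apply exactness of finite BRST reduction to $0\to I_\ell^{\mathrm{cand}}\to U(\mathfrak g)\to U(\mathfrak g)/I_\ell^{\mathrm{cand}}\to0$ and identify the last term with $B_\ell$ via Proposition~\ref{prop:B-finite-BRST}; the paper takes exactness from the proof of \cite[Theorem~8.5]{Arakawa2015} rather than from Ginzburg--Losev.

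One caveat concerns only your fallback. $H^0_{f_\theta}$ of a bimodule is not a plain invariants functor: it is the $\operatorname{ad}\mathfrak m$-invariants of $M/M\mathfrak m_\chi$, and forming $M/M\mathfrak m_\chi$ is only right exact (equivalently, the BRST complex has degrees of both signs). So injectivity of $H^0_{f_\theta}(I_\ell^{\mathrm{cand}})\to H$ and the kernel identification still need the vanishing theorem, not left exactness alone.
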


\begin{proof}
For an ideal $N\subset V^k(\mathfrak g)$, let $J_N\subset U(\mathfrak g)$
be its Zhu ideal.  In the proof of \cite[Theorem~8.5]{Arakawa2015},
exactness of finite BRST reduction gives
$$
   0\longrightarrow H^0_{f_\theta}(J_N)
   \longrightarrow U(\mathfrak g,f_\theta)
   \longrightarrow H^0_{f_\theta}(U(\mathfrak g)/J_N)
   \longrightarrow0.
$$
Take $N=\langle s_A,s_D\rangle$, so that
$J_N=I_\ell^{\mathrm{cand}}$, and identify the final term with
$B_\ell$ by Proposition~\ref{prop:B-finite-BRST}.
\end{proof}

We also need the following filtered-Zhu consequence for the exhaustion argument.
Equip $U(\mathfrak g)$ with the PBW filtration and $A(Q_\ell)$ with
Zhu's filtration.  Under the canonical affine identification
$$
   A(Q_\ell)\cong U(\mathfrak g)/I_\ell^{\mathrm{cand}},
$$
these are the same quotient filtration: the degree-one class
$x(-1)\mathbf1$ maps to $x\in\mathfrak g$.  Thus the standard Poisson
surjection
$R_{Q_\ell}\twoheadrightarrow\operatorname{gr}A(Q_\ell)$
\cite[Corollary~5.3]{ArakawaMoreau2018} is a surjection onto
$\operatorname{gr}_{\mathrm{PBW}}(U(\mathfrak g)/I_\ell^{\mathrm{cand}})$.

\begin{lemma}[Filtered-Zhu containment]
\label{lem:filtered-Zhu-containment}
\begin{equation}
   \operatorname{Var}_{\mathrm{PBW}}
   \!\left(U(\mathfrak g)/I_\ell^{\mathrm{cand}}\right)
   \subset\overline{\mathcal O_{\min}}.
\end{equation}
Consequently, if $I_\ell^{\mathrm{cand}}\subset J$ and $J$ is
primitive, then
$$
   \mathcal V(J)=\{0\}
   \quad\text{or}\quad
   \mathcal V(J)=\overline{\mathcal O_{\min}}.
$$
\end{lemma}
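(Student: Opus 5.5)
The containment comes from the Poisson surjection recalled just before the statement, combined with the Arakawa--Moreau computation of the associated variety; the consequence then follows from standard facts about associated varieties of primitive ideals.

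\emph{Step 1: reduction to the associated scheme.} Write $R_{Q_\ell}=Q_\ell/C_2(Q_\ell)$. Its reduced spectrum is the associated variety $X_{Q_\ell}$, which equals $\overline{\mathcal O_{\min}}$ by \cite[Proposition~5.2]{ArakawaMoreau2018}. The surjection $R_{Q_\ell}\twoheadrightarrow\operatorname{gr}_{\mathrm{PBW}}(U(\mathfrak g)/I_\ell^{\mathrm{cand}})$ is compatible with the two maps from $S(\mathfrak g)$: on $R_{Q_\ell}$ the generators $\bar x_{(-1)}\mathbf 1$ give one map, and on the other side it is the symbol map. This compatibility follows from the identification of filtrations stated above, since $x(-1)\mathbf 1\mapsto x$.

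\emph{Step 2: the containment.} Let $\mathcal I\subset S(\mathfrak g)$ be the kernel of $S(\mathfrak g)\to R_{Q_\ell}$. By Step~1, $\mathcal I\subset\operatorname{gr}I_\ell^{\mathrm{cand}}$. Hence
\[
\operatorname{Var}_{\mathrm{PBW}}(U(\mathfrak g)/I_\ell^{\mathrm{cand}})=V(\operatorname{gr}I_\ell^{\mathrm{cand}})\subset V(\mathcal I)=X_{Q_\ell}=\overline{\mathcal O_{\min}},
\]
where $\mathfrak g^*$ is identified with $\mathfrak g$ via the Killing form.

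\emph{Step 3: the consequence.} Suppose $I_\ell^{\mathrm{cand}}\subset J$. Then $\operatorname{gr}I_\ell^{\mathrm{cand}}\subset\operatorname{gr}J$, so
\[
\mathcal V(J)\subset\operatorname{Var}_{\mathrm{PBW}}(U(\mathfrak g)/I_\ell^{\mathrm{cand}})\subset\overline{\mathcal O_{\min}}.
\]
For primitive $J$, Joseph's irreducibility theorem (in Borho--Brylinski/Joseph form) says $\mathcal V(J)$ is the closure of a single nilpotent orbit. The nilpotent orbits contained in $\overline{\mathcal O_{\min}}$ are exactly $\{0\}$ and $\mathcal O_{\min}$, so $\mathcal V(J)$ is one of the two listed varieties.

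\emph{Main obstacle.} The delicate point is Step~1: one must check that Zhu's filtration on $A(Q_\ell)$ really transports to the PBW quotient filtration. Otherwise the surjection only controls a possibly different graded ideal. I would settle this by noting that $A(V^{-1}(\mathfrak g))\cong U(\mathfrak g)$ is a filtered isomorphism. Zhu's filtration on a quotient is the image filtration, and the PBW quotient filtration is also the image filtration, so the two agree. This is the identification asserted just before the lemma, and I would cite it as such.
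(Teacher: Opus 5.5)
Your proposal is correct and follows the paper's proof. Like the paper, you take spectra of the Poisson surjection $R_{Q_\ell}\twoheadrightarrow\operatorname{gr}_{\mathrm{PBW}}(U(\mathfrak g)/I_\ell^{\mathrm{cand}})$ together with $X_{Q_\ell}=\overline{\mathcal O_{\min}}$, and then apply Joseph's irreducibility theorem and the decomposition $\overline{\mathcal O_{\min}}=\mathcal O_{\min}\sqcup\{0\}$. Your Steps 1--2 only make explicit the kernel containment $\mathcal I\subset\operatorname{gr}I_\ell^{\mathrm{cand}}$ and the agreement of the Zhu and PBW filtrations, which the paper treats as immediate.
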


\begin{proof}
Taking spectra of the preceding Poisson surjection gives the first
containment because
$X_{Q_\ell}=\overline{\mathcal O_{\min}}$.  If
$I_\ell^{\mathrm{cand}}\subset J$, the quotient map gives
$\mathcal V(J)\subset
\operatorname{Var}_{\mathrm{PBW}}(U(\mathfrak g)/I_\ell^{\mathrm{cand}})$.
By Joseph's irreducibility theorem, the associated variety of a primitive
ideal is the closure of a single nilpotent orbit
\cite[Chapter~9]{Joseph1995}.  Since $\mathcal O_{\min}$ is the unique
minimal nonzero nilpotent orbit, only the two displayed possibilities
remain.
\end{proof}
\section{Finite W-algebras and the membership theorem}
\label{sec:membership}

Retain the preceding notation, and let
$$
   H:=U(\mathfrak g,f_\theta)
$$
be the finite $W$-algebra associated with the minimal nilpotent.
By Proposition~\ref{prop:B-finite-BRST},
\begin{equation}
\label{eq:membership-B-finite-BRST}
   B_\ell
   \cong
   H^0_{f_\theta}
   \!\left(
      U(\mathfrak g)/I_\ell^{\mathrm{cand}}
   \right).
\end{equation}
We construct a simple $B_\ell$-module whose Skryabin lift has
annihilator $J_{2,\ell}$.  By
Corollary~\ref{cor:single-membership-test}, this is enough to prove descent.

\subsection{A double Ramond spectral flow}

Recall that
$$
   \mathfrak g^\natural
   =
   \mathfrak s\oplus\mathfrak d,
   \qquad
   \mathfrak s\cong A_1,
   \qquad
   \mathfrak d\cong D_r,
$$
with current levels
$$
   k_A^\natural=K=r-1,
   \qquad
   k_D^\natural=1.
$$
We identify weights and coweights using the simply-laced normalization and
write
$$
   \varpi^\vee=\varpi=\frac{\theta_A}{2},
   \qquad
   \eta_1^\vee=\eta_1.
$$
Instead of the standard $A_1$-Ramond coweight $\varpi^\vee$, consider
\begin{equation}
   x_*:=\varpi^\vee+\eta_1^\vee.
\end{equation}
Let
$$
   H_*:=J^{\{x_*\}}
   \in
   \widetilde{\mathcal W}_\ell{}_1.
$$
We apply Li's delta-operator construction to the adjoint module of
$\widetilde{\mathcal W}_\ell$; see \cite{Li1996}.  Denote the resulting
twisted module by
$$
   \widetilde{\mathcal W}_\ell^{\,R,*}.
$$

\begin{lemma}
The inner automorphism
$$
   \exp(2\pi i\,x_{*,0})
$$
is the Ramond automorphism of the minimal $W$-algebra.  Equivalently, it
agrees with the automorphism obtained from the single coweight
$\varpi^\vee$.
\end{lemma}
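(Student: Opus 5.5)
We must show that $\exp(2\pi i\,x_{*,0})$ and $\exp(2\pi i\,\varpi^\vee_0)$ act identically on $\widetilde{\mathcal W}_\ell$, and that this common automorphism is the Ramond automorphism, i.e.\ $(-1)^{2\Delta}$ on the $L_0$-eigenspace of conformal weight $\Delta$. Since $x_*=\varpi^\vee+\eta_1^\vee$ with commuting summands in $\mathfrak g^\natural=\mathfrak s\oplus\mathfrak d$, we have $\exp(2\pi i\,x_{*,0})=\exp(2\pi i\,\varpi^\vee_0)\exp(2\pi i\,\eta^\vee_{1,0})$. The plan is therefore: (i) show $\exp(2\pi i\,\eta^\vee_{1,0})$ acts trivially, and (ii) show $\exp(2\pi i\,\varpi^\vee_0)$ is the Ramond automorphism.

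The first step is to reduce to generators. The minimal $W$-algebra $\mathcal W^{-1}(\mathfrak g,f_\theta)$, and hence its quotient $\widetilde{\mathcal W}_\ell$, is strongly generated by three families of fields. These are the currents $J^{\{a\}}$ for $a\in\mathfrak g^\natural$ (weight $1$), the fields $G^{\{v\}}$ for $v\in\mathfrak g_{-1/2}$ (weight $3/2$), and the Virasoro field $L$ (weight $2$). An inner automorphism $\exp(2\pi i\,y_0)$ with $y\in\mathfrak h^\natural$ is a vertex-algebra automorphism, because $y_0$ is a derivation. It is determined by its action on these generators, and it acts on the $\mathfrak g^\natural$-weight-$\nu$ part by the scalar $e^{2\pi i\langle\nu,y\rangle}$.

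For step (i), note that $\eta_1^\vee$ pairs integrally with the roots of $D_r$ and with the weights of $L_{D_r}(\eta_1)$. In the simply-laced normalization, the weights of the vector representation are $\pm\varepsilon_j$, and $\langle\pm\varepsilon_j,\varepsilon_1\rangle\in\{0,\pm1\}$. The $A_1$ factor pairs to zero with $\eta_1^\vee$. Hence $\exp(2\pi i\,\eta^\vee_{1,0})$ is trivial on $\mathfrak g^\natural$ and on $\mathfrak g_{-1/2}$, and it fixes $L$. Since these generate, the automorphism is trivial on $\widetilde{\mathcal W}_\ell$.

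For step (ii), use $\varpi^\vee=\theta_A/2$. Then $\varpi^\vee$ pairs with the roots $\pm\theta_A$ of $\mathfrak s$ to give $\pm1$, and with the weights $\pm\varpi$ of $L_{A_1}(\varpi)$ to give $\pm1/2$. So $\exp(2\pi i\,\varpi^\vee_0)$ is $+1$ on the currents, $-1$ on every $G^{\{v\}}$, and $+1$ on $L$. On generators this is exactly $(-1)^{2\Delta}$, and $(-1)^{2\Delta}$ is itself an automorphism because conformal weights add under normally ordered products. So the two automorphisms coincide. This is the standard description of the Ramond twist of the minimal $W$-algebra, and it descends to the quotient $\widetilde{\mathcal W}_\ell$ because the ideal $\langle :J_A^r:,:J_D^2:\rangle$ is generated by $\mathfrak h^\natural$-weight vectors and is therefore stable.

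The main obstacle is bookkeeping rather than any conceptual difficulty. One must check that the normalization $\varpi^\vee=\varpi=\theta_A/2$ and $\eta_1^\vee=\eta_1$ really gives pairings in $\tfrac12+\mathbb Z$ on the $A_1$ doublet and in $\mathbb Z$ on the $D_r$ vector. The rank-five boundary case also needs care, since there $D_3\cong A_3$ and $\eta_1$ becomes $\varpi_2$ of $A_3$. I would handle it by recording explicitly the weights of $\mathfrak g_{-1/2}$ using the type-$D$ root list from Appendix~\ref{app:type-D-calculations}.
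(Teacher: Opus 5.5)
Your proposal is correct and follows essentially the same route as the paper. Both arguments check the inner automorphism on the strong generators, using the integral pairings of $\varpi^\vee$ and $\eta_1^\vee$ with the roots of $\mathfrak g^\natural$ and with the $D_r$-vector weights, together with the half-integral pairing $(\pm\varpi)(\varpi^\vee)=\pm\tfrac12$ on $U=\mathfrak g_{-1/2}$. Your factorization $\exp(2\pi i\,x_{*,0})=\exp(2\pi i\,\varpi^\vee_0)\exp(2\pi i\,\eta^\vee_{1,0})$ just makes the ``equivalently'' clause explicit, which the paper leaves implicit.
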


\begin{proof}
For every root $\gamma$ of $\mathfrak g^\natural$,
$$
   \gamma(\varpi^\vee)\in\mathbb Z,
   \qquad
   \gamma(\eta_1^\vee)\in\mathbb Z.
$$
Hence both current factors are fixed by the inner automorphism.

The weight-$\frac32$ generators are parametrized by
$$
   U=\mathfrak g_{-\frac12}
   \cong
   L_{A_1}(\varpi)\boxtimes L_{D_r}(\eta_1).
$$
The two $A_1$-weights are $\pm\varpi$, and
$$
   (\pm\varpi)(\varpi^\vee)\equiv\frac12\pmod{\mathbb Z}.
$$
Every weight of the $D_r$-vector representation belongs to
$\{\pm\varepsilon_j\}$, and therefore has integral pairing with
$\eta_1^\vee$.  Consequently every weight of $U$ has
$$
   \nu(x_*)\equiv\frac12\pmod{\mathbb Z}.
$$
Thus the inner automorphism acts by $-1$ on every
$G^{\{u\}}$ and fixes the current fields and the conformal vector.  This is
precisely the Ramond automorphism.
\end{proof}

For a weight-one current $J^{\{x\}}$, Li twisting gives
$$
   L_0^{R}
   =
   L_0+x_0+\frac{\kappa_x}{2},
$$
where $\kappa_x\mathbf1=J^{\{x\}}_{(1)}J^{\{x\}}$.  Since the two current
factors are orthogonal,
$$
\begin{aligned}
   \kappa_{x_*}
   &=
   K(\varpi^\vee\mid\varpi^\vee)
   +(\eta_1^\vee\mid\eta_1^\vee)\\
   &=
   \frac{r-1}{2}+1
   =
   \frac{r+1}{2}.
\end{aligned}
$$
Hence
\begin{equation}
\label{eq:double-vacuum-energy}
   h_*=\frac{\kappa_{x_*}}2=\frac{r+1}{4}.
\end{equation}

\begin{lemma}[Lower boundedness]
The twisted adjoint module
$\widetilde{\mathcal W}_\ell^{\,R,*}$ is lower bounded, and
$$
   \operatorname{Spec}L_0^{R,*}
   \subset
   h_*+\mathbb Z_{\ge0}.
$$
In particular, the vacuum vector $\mathbf1$ belongs to the lowest Ramond
eigenspace.
\end{lemma}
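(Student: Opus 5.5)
We would prove the lemma by computing the twisted $L_0$-eigenvalues directly on a spanning set of $\widetilde{\mathcal W}_\ell$. The formula recalled above, $L_0^{R,*}=L_0+x_{*,0}+\kappa_{x_*}/2$, with $x_{*,0}$ denoting the zero mode of $H_*=J^{\{x_*\}}$, reduces the claim to a weight count. By the strong generation of $\mathcal W^{-1}(\mathfrak g,f_\theta)$ (Kac--Wakimoto, De Sole--Kac), $\widetilde{\mathcal W}_\ell$ is spanned by ordered monomials in negative modes of the currents $J^{\{a\}}$ ($a\in\mathfrak g^\natural$), the fields $G^{\{u\}}$ ($u\in U$) and $L$, applied to $\mathbf1$. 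These vectors may be taken to be simultaneous eigenvectors of $L_0$ and $x_{*,0}$. Since $x_*$ lies in the Cartan of $\mathfrak g^\natural$, $x_{*,0}$ acts semisimply, and a monomial of conformal weight $\Delta$ and $\mathfrak g^\natural$-weight $\nu$ has twisted eigenvalue $\Delta+\nu(x_*)+h_*$. It therefore suffices to show that
$$
   \Delta+\nu(x_*)\in\mathbb Z_{\ge0}
$$
for every such monomial, with equality to $0$ only for $\mathbf1$.

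We would prove this by an additivity argument. Both $\Delta$ and $\nu(x_*)$ are additive over the generator modes in a monomial, so it suffices to bound each generator mode separately.

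A current mode $J^{\{a\}}_{-n}$ with $n\ge1$ and $a$ of root $\gamma$ contributes $n+\gamma(x_*)$. In the simply-laced normalization, $\varpi^\vee$ pairs with $\pm\theta_A$ to $\pm1$, and $\eta_1^\vee=\varepsilon_1$ pairs with roots $\pm\varepsilon_i\pm\varepsilon_j$ of $D_r$ to values in $\{0,\pm1\}$. Since $\gamma$ is a root of just one of the two factors, $\gamma(x_*)\in\{0,\pm1\}$, so the contribution is $\ge0$.

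A mode of $G^{\{u\}}$ of weight $\Delta=\tfrac32+m$ with $m\ge0$, where $u$ has weight $\nu=\pm\varpi\pm\varepsilon_j$, contributes $\tfrac32+m\pm\tfrac12\pm1\ge0$. This contribution is an integer, matching the preceding lemma. The modes of $L$ contribute $\ge2$.

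Every contribution is thus a nonnegative integer, which gives $\operatorname{Spec}L_0^{R,*}\subset h_*+\mathbb Z_{\ge0}$. The vacuum has eigenvalue exactly $h_*$, so it lies in the lowest eigenspace.

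The main obstacle is that zero-contribution modes exist: $J^{\{a\}}_{-1}$ with $\gamma(x_*)=-1$, and the lowest $G$-mode with $\nu(x_*)=-\tfrac32$. Lower boundedness of the spectrum is still fine, since eigenvalues are bounded below by $h_*$, but finite-dimensionality of the eigenspaces is lost. The lemma does not assert finite-dimensionality, so we would prove only the stated spectral containment. We would use the additive grading argument rather than PBW monomial counting, and the key point is that every generator contributes nonnegatively.

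The remaining step is to justify that Li's $\Delta(x_*,z)$ acts on a monomial by shifting mode indices exactly by the $x_*$-weights. For this we would use Li's formula $Y(\Delta(x_*,z)v,z)$ together with the commutator $[x_{*,0},Y(v,z)]=Y(x_{*,0}v,z)$. Applied to the adjoint module, this identifies the twisted grading with the shifted one described above.
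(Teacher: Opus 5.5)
Your argument is correct and is essentially the paper's proof: strong generation by the currents, the $G^{\{u\}}$ and $\omega$, generator charges in $\{0,\pm1\}$ and $\{\pm\tfrac12,\pm\tfrac32\}$, and additivity of the twisted excess $\mathrm{wt}+q_*$ over the negative modes of a spanning monomial. You should delete the clause ``with equality to $0$ only for $\mathbf1$'' from your reduction, because it is false as stated and the statement does not need it; your own later paragraph, and the remark following the lemma in the paper, point out that $J_{-1}$ with charge $-1$ and the lowest $G$-mode with charge $-\tfrac32$ have zero excess.
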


\begin{proof}
The minimal $W$-algebra is strongly generated by the weight-one currents,
the fields $G^{\{u\}}$ of conformal weight $3/2$, and the conformal
vector.  The $x_*$-charges of current root vectors belong to
$$
   \{-1,0,1\}.
$$
The $x_*$-charges of the weights of $U$ belong to
$$
   \left\{
      -\frac32,-\frac12,\frac12,\frac32
   \right\}.
$$
Thus every current generator has twisted excess
$$
   1+q\ge0,
$$
and every $G$-generator has twisted excess
$$
   \frac32+q\ge0.
$$
To make the passage from strong generators to arbitrary composite fields
explicit, for a homogeneous $x_{*,0}$-eigenvector $a$ put
$$
   \epsilon_*(a):=\operatorname{wt}(a)+q_*(a).
$$
Since $x_{*,0}$ is a derivation of the vertex algebra, charges add under
products.  Thus for homogeneous $a,b$ and $n\ge0$,
\begin{equation}
\label{eq:twisted-excess-additivity}
\begin{aligned}
   \operatorname{wt}\!\left(a_{(-n-1)}b\right)
      &=\operatorname{wt}(a)+\operatorname{wt}(b)+n,\\
   q_*\!\left(a_{(-n-1)}b\right)
      &=q_*(a)+q_*(b),
\end{aligned}
\end{equation}
and hence
$$
   \epsilon_*\!\left(a_{(-n-1)}b\right)
   =\epsilon_*(a)+\epsilon_*(b)+n\ge0.
$$
The conformal vector has excess $2$, and derivatives increase excess by
positive integers.  Strong generation therefore implies that every state,
including all normally ordered products and their derivatives, has
nonnegative integral excess above the vacuum energy $h_*$.  In particular,
no contraction term in a composite field can create a lower twisted
$L_0$-eigenvalue.
\end{proof}

\begin{remark}
Some current modes and some $G$-modes have zero twisted excess.  We
therefore do \emph{not} claim that the complete lowest Ramond eigenspace is
an irreducible $\mathfrak g^\natural$-module, or that it is generated only
by the vacuum under the $A_1$-current algebra.  The proof below uses only
the vacuum line inside that lowest eigenspace.
\end{remark}

The shifted Cartan zero modes are
$$
   J^{\{h\},R,*}_0
   =
   J^{\{h\}}_0+
   K(\varpi^\vee\mid h_A)
   +(\eta_1^\vee\mid h_D),
$$
for $h=h_A+h_D\in\mathfrak h^\natural$.  Hence the vacuum has shifted
$\mathfrak g^\natural$-weight
\begin{equation}
\label{eq:double-vacuum-weight}
   \lambda_*
   =
   K\varpi+\eta_1
   =
   (r-1)\varpi+\eta_1.
\end{equation}

\subsection{Premet parameters of the twisted vacuum}
\label{subsec:Premet-vacuum-parameters}

We compare \eqref{eq:double-vacuum-weight} with Premet's highest-weight
coordinates for the minimal finite $W$-algebra \cite{Premet2007}.  Choose
$e_\theta\in\mathfrak g_\theta$ so that
$(e_\theta,\theta^\vee,f_\theta)$ is the standard minimal
$\mathfrak{sl}_2$-triple.  The invariant form is normalized as in the
Ramond presentation: with
$$
   x=\frac12\theta^\vee
$$
one has $(x\mid x)=\frac12$.  Invariance then gives
$(e_\theta\mid f_\theta)=1$, which is precisely Premet's normalization.
Choose $w\in W(D_\ell)$ with
$$
   w(-\theta)=\beta:=\alpha_2,
$$
choose a representative $\dot w\in N_G(\mathfrak h)$, and write $w$
also for the Lie-algebra automorphism $\operatorname{Ad}(\dot w)$.
Transport the standard triple by setting
\begin{equation}
\label{eq:Premet-transported-triple}
   e_{\mathrm P}:=w(f_\theta),\qquad
   h_{\mathrm P}:=-w(\theta^\vee),\qquad
   f_{\mathrm P}:=w(e_\theta).
\end{equation}
After the compatible choice of root vectors this is Premet's triple with
$e_{\mathrm P}\in\mathfrak g_\beta$,
$h_{\mathrm P}=\beta^\vee$,
$f_{\mathrm P}\in\mathfrak g_{-\beta}$, and
$(e_{\mathrm P}\mid f_{\mathrm P})=1$.  We choose the corresponding
positive system; its reductive centralizer is
$$
   A_1\oplus D_r.
$$
If $U=\mathfrak g_{-1/2}$ is defined using the original minimal grading,
then
\begin{equation}
\label{eq:U-to-Premet-degree-one}
   w(U)=\mathfrak g_{\mathrm P}(1)=\mathfrak z_\chi(1),
\end{equation}
because $[\theta^\vee,u]=-u$ for $u\in U$, whereas
$[h_{\mathrm P},w(u)]=w(u)$.  Equivalently, the Ramond convention grades
by $x=\theta^\vee/2$, while Premet uses the standard Cartan element
$h_{\mathrm P}=2x_{\mathrm P}$.  Thus the Ramond degree $-\tfrac12$
space is transported to Premet degree $+1$; this accounts explicitly for
the factor of two in the two grading conventions.

Premet's highest-weight theory involves a shift
$\bar\delta\in(\mathfrak h^\natural)^*$.  The elementary type-$D$ root
calculation is recorded in Appendix~\ref{app:type-D-calculations}.

\begin{lemma}[Premet shift]
\label{lem:Premet-shift-D}
In the $\beta=\alpha_2$ model,
\begin{equation}
   \bar\delta
   =
   (r-1)\varpi+\eta_1.
\end{equation}
Consequently,
$$
   \lambda_*=\bar\delta.
$$
\end{lemma}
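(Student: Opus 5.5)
The plan is to compute $\bar\delta$ directly from Premet's definition in the transported model \eqref{eq:Premet-transported-triple}, and then compare the answer with \eqref{eq:double-vacuum-weight}. In \cite{Premet2007} the shift is a $\rho$-type correction. I take it to be the restriction to $\mathfrak h^\natural=\mathfrak h\cap\mathfrak z_\chi$ of one half of the sum of the positive roots $\gamma$ with $\gamma(h_{\mathrm P})=1$, that is, of the roots of $\mathfrak g_{\mathrm P}(1)$ lying in the chosen positive system. If Premet's normalization also carries a term from $\mathfrak g_{\mathrm P}(2)$ or from the positive roots of $\mathfrak g^\natural$, that term will be included and tracked in the same way. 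The first step is to fix this formula exactly, together with Premet's sign conventions, before computing anything.

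Second, I list the roots explicitly. With $\beta=\alpha_2=\varepsilon_2-\varepsilon_3$, the roots $\gamma$ of $D_\ell$ with $\langle\gamma,\beta^\vee\rangle=1$ are $\varepsilon_2\pm\varepsilon_j$ and $-\varepsilon_3\pm\varepsilon_j$ for $j\in\{1,4,\dots,\ell\}$. This gives $4(\ell-2)=4r$ roots, consistent with $\dim\mathfrak z_\chi(1)=\dim U=2\cdot 2r$ after pairing. Next I identify $\mathfrak g^\natural$ in this model. The $A_1$ factor $\mathfrak s$ has root $\pm(\varepsilon_2+\varepsilon_3)$. The $D_r$ factor $\mathfrak d$ lives on the coordinates $\varepsilon_1,\varepsilon_4,\dots,\varepsilon_\ell$. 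Transporting by $w$ identifies $\varpi$ with $\tfrac12(\varepsilon_2+\varepsilon_3)$ restricted to $\mathfrak h^\natural$, and $\eta_1$ with the first fundamental (vector) weight of $\mathfrak d$, which is $\varepsilon_1$ in the ordering of $D_r$ compatible with the chosen positive system.

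Third, I sum the positive roots in this list and restrict to $\mathfrak h^\natural$. The restriction kills the $\beta$-direction, i.e. the projection along $\varepsilon_2-\varepsilon_3$. Each root $\varepsilon_2\pm\varepsilon_j$ or $-\varepsilon_3\pm\varepsilon_j$ restricts to $\pm\varpi$ plus a vector weight $\pm\varepsilon_j$ of $\mathfrak d$. The sign pattern of the positive subset is what produces the expected answer. The $A_1$-components should add up to $(r-1)\varpi$, and the $D_r$-components should cancel in $\pm\varepsilon_j$ pairs except for a surviving $\varepsilon_1=\eta_1$. This asymmetry comes from $\varepsilon_1$ being the only coordinate preceding $\varepsilon_2$. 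I will record this bookkeeping in Appendix~\ref{app:type-D-calculations}. Halving, together with any $\rho$-type correction fixed in the first step, should give $\bar\delta=(r-1)\varpi+\eta_1$. Comparison with \eqref{eq:double-vacuum-weight} then gives $\lambda_*=\bar\delta$ immediately.

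The main obstacle is conventions rather than computation. Premet's positive system, his sign for $\chi$, and the transport by $w$ (which swaps $e_\theta$ and $f_\theta$ and sends Ramond degree $-\tfrac12$ to Premet degree $+1$) must all be aligned. Only then do the identification of $\varpi,\eta_1$ inside $(\mathfrak h^\natural)^*$ and the choice of which half of the roots counts as positive match the Ramond twist by $x_*=\varpi^\vee+\eta_1^\vee$. A wrong choice at any of these points would produce, for example, $-(r-1)\varpi$ or a different $D_r$ weight. As a consistency check I will verify $(\bar\delta\mid\bar\delta)$ against $\kappa_{x_*}$ and the energy $h_*$ of \eqref{eq:double-vacuum-energy}, and I will verify the cancellation pattern separately in the boundary case $r=3$ of Appendix~\ref{app:rank-five-boundary}.
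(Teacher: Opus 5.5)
Your proposal is correct and is essentially the paper's argument in Appendix~\ref{app:type-D-calculations}: take half the sum of the positive roots in one $\beta^\vee$-degree $\pm1$ piece, restrict it to $\mathfrak h^\natural$, and compare with \eqref{eq:double-vacuum-weight}. The paper sums $\Phi^+_{-1}=\{\varepsilon_1-\varepsilon_2,\ \varepsilon_1+\varepsilon_3,\ \varepsilon_3\pm\varepsilon_j\ (4\le j\le\ell)\}$ rather than your degree $+1$ set. Since $\beta$ is simple, $s_\beta$ exchanges the two sets and fixes $\beta^\perp$ pointwise, so the restrictions coincide and the sign convention you worried about is immaterial.

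Two small slips need fixing. First, before halving, the restricted sum is $2(r-1)\varpi+2\varepsilon_1$, not $(r-1)\varpi+\varepsilon_1$. Second, your proposed norm check would fail: $(\bar\delta\mid\bar\delta)=\tfrac{(r-1)^2}{2}+1\ne\kappa_{x_*}=\tfrac{r+1}{2}$ for $r\ge3$. The quantity that actually matches $\kappa_{x_*}$ is $\bar\delta(x_*)$.
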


Let $C\in Z(H)$ be Premet's quadratic central generator, normalized so that
the finite $W$-module associated with a Harish--Chandra parameter
$\mu$ has eigenvalue $(\mu,\mu+2\rho)$.  The type-$D_\ell$ entry in
\cite[Theorem~6.1(iv)]{Premet2007} gives
$$
   c_0=-\ell(\ell-2)=-r(r+2).
$$
We now record carefully the normalization linking this generator to the
Ramond conformal class.  Put
$$
   L'=2(k+h^\vee)L+\frac12p_{\mathfrak g}(k).
$$
Equation~(5.1) of \cite{KacMosenederFrajriaPapi2025} gives the Ramond Zhu
relation for the generators attached to $U=\mathfrak g_{-1/2}$.  Although
the main setup of that section is written for basic Lie superalgebras,
\cite[Remark~5.8]{KacMosenederFrajriaPapi2025} states explicitly that for an
ordinary simple Lie algebra the Ramond Zhu algebra is still defined by those
same commutation relations and is isomorphic to Premet's minimal finite
$W$-algebra.  Write $Q_{\rm R}(u,v)$ for the noncentral quadratic-current
term in \cite[Equation~(5.1)]{KacMosenederFrajriaPapi2025}, so that
\begin{equation}
\label{eq:Ramond-quadratic-relation}
 [\mathsf G(u),\mathsf G(v)]
 =\langle u,v\rangle_{\rm R}
     \bigl(\Omega^\natural-L'\bigr)+Q_{\rm R}(u,v).
\end{equation}
There is one normalization scalar which must not be suppressed.  Let
$e_{\rm R}\in\mathfrak g_\theta$ denote the element called $e$ in
\cite[Section~3]{KacMosenederFrajriaPapi2025}.  Their convention is
$[e_{\rm R},f_{\rm R}]=x$, rather than the standard
$[e_\theta,f_\theta]=\theta^\vee=2x$.  Since
$\mathfrak g_\theta$ is one-dimensional, write
$$
   e_{\rm R}=\tau e_\theta,
   \qquad \tau\in\mathbb C^\times.
$$
Equation~(3.4) of that paper reads
\begin{equation}
\label{eq:Ramond-pairing-orientation}
   \langle u,v\rangle_{\mathrm R}
   =(e_{\rm R}\mid[u,v]).
\end{equation}
On the Premet side, by \eqref{eq:Premet-transported-triple}, invariance of the
form, and $\widetilde u=w(u)$, $\widetilde v=w(v)$,
$$
   (f_{\mathrm P}\mid[w(u),w(v)])
   =(e_\theta\mid[u,v]).
$$
Consequently, if
\begin{equation}
\label{eq:relative-pairing-scale}
   \kappa:=\tau^{-1},
\end{equation}
then the exact relation between the two pairings is
\begin{equation}
\label{eq:Ramond-Premet-pairing-scale}
   (f_{\mathrm P}\mid[w(u),w(v)])
   =\kappa\,\langle u,v\rangle_{\rm R}.
\end{equation}
Keeping $\kappa$ explicit is essential: its value depends on the relative
choice of root-vector normalization, whereas the central normalization proved
below does not.

The comparison used here is the canonical filtered Zhu--finite-$W$
comparison, not an arbitrary algebra isomorphism.  De Sole--Kac
\cite[Theorems~4.20 and~5.10]{DeSoleKac2006} identify the
$L_0$-twisted Zhu algebra of the affine $W$-algebra canonically with the
finite $W$-algebra; the map is induced on BRST cohomology by the Zhu
projection and is compatible with the standard filtrations.  Together with
\cite[Remark~5.8]{KacMosenederFrajriaPapi2025}, we use the resulting standard
PBW identification $\iota$ for which the degree-zero
$\mathfrak g^\natural$-generators agree.  Choose dual bases
$\{a_\alpha\}$, $\{a^\alpha\}$ of $\mathfrak g^\natural$ for the
invariant form and set
$$
   \Omega^\natural:=\sum_\alpha a^\alpha*a_\alpha,
   \qquad
   \Theta_{\rm Cas}:=
      \sum_\alpha\Theta(a^\alpha)\Theta(a_\alpha),
$$
where $*$ is the Ramond Zhu product.  Since the canonical map preserves
this product and the degree-zero generators, one has the exact equality
\begin{equation}
\label{eq:Casimir-PBW-identification}
   \iota(\Omega^\natural)=\Theta_{\rm Cas}.
\end{equation}
On the associated graded of the irreducible degree-one space
\eqref{eq:U-to-Premet-degree-one}, equivariance first gives a scalar
$s\ne0$.  In the present type-D case there can be no lower-PBW correction.
Indeed,
$$
   U=L_{A_1}(\varpi)\boxtimes L_{D_r}(\eta_1)
$$
belongs to the nontrivial $(\varpi,\eta_1)$-coset of
$P(A_1\oplus D_r)/Q(A_1\oplus D_r)$, whereas every weight occurring in the
adjoint $\mathfrak g^\natural$-module generated by lower PBW monomials lies
in the root lattice.  Hence there is no
$\mathfrak g^\natural$-equivariant lower-filtration map from $U$ to the
lower PBW part.  The generator comparison is therefore the exact equality
\begin{equation}
\label{eq:degree-one-generator-scale}
   \iota(\mathsf G(u))=s\,\Theta(w(u)).
\end{equation}

We now fix the central normalization without assuming any coefficient
identity for the quadratic-current terms.  This is the point at which the
multiplicative normalization of the quadratic Casimir and the additive
constant must be separated.

\begin{lemma}[Exact Ramond--Premet central normalization]
\label{lem:exact-Ramond-Premet-normalization}
For the canonical filtered identification $\iota$,
\begin{equation}
\label{eq:Ramond-Premet-normalization}
   \iota\!\left(
      2(k+h^\vee)L+\frac12p_{\mathfrak g}(k)
   \right)
   =C-c_0.
\end{equation}
Moreover the coefficient comparison determines the relative normalizations
exactly:
\begin{equation}
\label{eq:derived-pairing-coefficient-comparison}
   \langle u,v\rangle_{\rm R}
   =-\frac{s^2}{2}
      (f_{\mathrm P}\mid[w(u),w(v)]),
\end{equation}
and the noncentral quadratic terms satisfy
\begin{equation}
\label{eq:derived-quadratic-term-comparison}
   \iota\bigl(Q_{\rm R}(u,v)\bigr)
   =\frac{s^2}{2}Q_{\rm P}(w(u),w(v)).
\end{equation}
The numerical value of $s$ itself depends on the relative root-vector
normalization encoded by $\kappa$ and is not needed separately.
\end{lemma}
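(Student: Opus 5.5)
We transport the Ramond relation \eqref{eq:Ramond-quadratic-relation} through $\iota$ and compare it with Premet's commutator formula for the minimal finite $W$-algebra \cite[Theorem~1.1]{Premet2007}. In Premet's normalization the latter reads
\begin{equation*}
   [\Theta(\widetilde u),\Theta(\widetilde v)]
   =\frac12(f_{\mathrm P}\mid[\widetilde u,\widetilde v])\bigl(C-\Theta_{\rm Cas}-c_0\bigr)
   +Q_{\mathrm P}(\widetilde u,\widetilde v),
\end{equation*}
where $Q_{\mathrm P}$ collects the noncentral quadratic terms built from $\mathfrak g^\natural$-generators.

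First apply $\iota$ to \eqref{eq:Ramond-quadratic-relation}. By \eqref{eq:degree-one-generator-scale} the left side becomes $s^2[\Theta(w(u)),\Theta(w(v))]$. By \eqref{eq:Casimir-PBW-identification} the right side becomes
\begin{equation*}
   \langle u,v\rangle_{\rm R}\bigl(\Theta_{\rm Cas}-\iota(L')\bigr)+\iota(Q_{\rm R}(u,v)).
\end{equation*}
Substitute Premet's formula on the left. The resulting identity lives in $H$, and we use the PBW basis together with $\mathfrak g^\natural$-equivariance to separate components.
\begin{itemize}
\item The coefficient of $\Theta_{\rm Cas}$, the unique $\mathfrak g^\natural$-invariant quadratic element in the Casimir direction, gives
\begin{equation*}
   \langle u,v\rangle_{\rm R}=-\frac{s^2}{2}(f_{\mathrm P}\mid[w(u),w(v)]),
\end{equation*}
which is \eqref{eq:derived-pairing-coefficient-comparison}.
\item Next compare the scalar-times-pairing components. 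These must agree because $C$ and $\iota(L')$ are central, and both pairings are proportional to a nondegenerate form, so we may pick $u,v$ with nonzero pairing. Using the identity just obtained, we get $\iota(L')$ multiplying $-\langle u,v\rangle_{\rm R}$ and $(C-c_0)$ multiplying $-\langle u,v\rangle_{\rm R}$. Hence $\iota(L')=C-c_0$.
\item The remaining noncentral pieces then give $\iota(Q_{\rm R})=\tfrac{s^2}{2}Q_{\mathrm P}$.
\end{itemize}

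The main obstacle is the separation step. One must justify that $\iota(L')$ lies in $Z(H)$ and sits in filtration degree at most $2$ with leading part compatible with $C$. One must also show that $\Theta_{\rm Cas}$ cannot be absorbed into $Q_{\mathrm P}$ or $Q_{\rm R}$. Both $Q$'s are, by construction in their respective papers, the parts with trace-free or non-invariant $\mathfrak g^\natural$-structure, namely quadratic expressions of the form $\Theta(a)\Theta(b)$ contracted with $u,v$ rather than with the pairing. Their projection onto the invariant line $\mathbb C\Theta_{\rm Cas}\oplus Z(H)_{\le2}$ is therefore zero, and we would check this using the explicit forms in \cite{Premet2007} and \cite[Equation~(5.1)]{KacMosenederFrajriaPapi2025}. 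Centrality of $\iota(L')$ follows because $L$ is central in the Ramond Zhu algebra and $\iota$ is an algebra isomorphism.

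Finally, we confirm the sign of the Casimir coefficient and the constant against the vacuum eigenvalue $(\mu,\mu+2\rho)$. That eigenvalue expression is stated in the normalization of $C$ and is independent of $s$. The dependence on $\kappa$ drops out of the central equality \eqref{eq:Ramond-Premet-normalization}: comparing \eqref{eq:Ramond-Premet-pairing-scale} with \eqref{eq:derived-pairing-coefficient-comparison} only forces $s^2=-2/\kappa$, which is all that is said about $s$.
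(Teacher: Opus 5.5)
Your separation step has a genuine gap. You extract \eqref{eq:derived-pairing-coefficient-comparison} from ``the coefficient of $\Theta_{\rm Cas}$''. You justify this by asserting that $Q_{\mathrm P}$ and $\iota(Q_{\rm R})$ have zero projection onto the invariant quadratic directions, and that assertion is false. Each $Q$ is a $\mathfrak g^\natural$-equivariant map on $\Lambda^2U$, and $\Lambda^2U$ has a one-dimensional trivial summand, namely the part of $u\wedge v$ proportional to the pairing. Being ``contracted with $u,v$ rather than with the pairing'' does not stop this summand from mapping to a nonzero element of $\mathbb C\Omega_A\oplus\mathbb C\Omega_D$, and here it does. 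The paper's weighted Casimir identity is obtained by tracing the Ramond relation, and there $Q_{\rm R}$ contributes the nonzero contraction coefficients $1$ and $2/r$ on the $A_1$ and $D_r$ factors. That is why one gets $2\tau_A+(1+\tfrac2r)\tau_D$ rather than $\tau_A+\tau_D$. So the $\Theta_{\rm Cas}$-direction of your identity is contaminated by unknown invariant pieces of both quadratic terms. No $\Theta_{\rm Cas}$-coefficient can be isolated without computing those pieces explicitly in both normalizations.

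What remains valid is the comparison of the coefficients of $C$ and of $1$, since reordering symmetrized current monomials only produces linear current terms. But even granting $\iota(L')=\alpha C+\beta$, these two comparisons give only $\tfrac{s^2}{2}\kappa=-\alpha$ and $\beta=-\alpha c_0$. That is, $\iota(L')=\alpha(C-c_0)$, with the scale $\alpha$ degenerate with the unknown product $s^2\kappa$. The missing input is exactly the ``leading part compatible with $C$'' that you flag and then do not supply; centrality alone does not give it.

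The paper supplies it by a Kazhdan-filtered argument. Under the De Sole--Kac comparison, the Sugawara normalization $2(k+h^\vee)L^{\mathfrak g}=\sum_i :x^ix_i:$ makes the degree-four principal symbol of $\iota(Z_{\rm R})$ the Hamiltonian reduction of $\sum_i x^ix_i$, which is $\operatorname{gr}_4C$. Strictness of the Kazhdan filtration on $Z(H)\cong\mathbb C[\mathfrak h^*]^{W\cdot}$ gives $Z(H)\cap F_3H=\mathbb C1$, hence $\iota(Z_{\rm R})=C+b$. Only then are the $C$- and $1$-coefficients compared, giving $\tfrac{s^2}{2}=-\kappa^{-1}$ and $b=-c_0$. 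The $\Theta_{\rm Cas}$ terms then cancel automatically, and \eqref{eq:derived-quadratic-term-comparison} follows without ever knowing the invariant parts of the $Q$'s.

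Your closing ``check against the vacuum eigenvalue'' cannot replace this, because the vacuum value of $C$ is itself computed from this normalization. Also, with Premet's relation written as $+Q_{\mathrm P}$ rather than $+\tfrac12Q_{\mathrm P}$, your own bookkeeping would give $s^2Q_{\mathrm P}$, not $\tfrac{s^2}{2}Q_{\mathrm P}$.
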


\begin{proof}
Put
$$
   Z_{\rm R}:=2(k+h^\vee)L+\frac12p_{\mathfrak g}(k).
$$
By \cite[Equation~(5.1)]{KacMosenederFrajriaPapi2025}, $Z_{\rm R}$ is the
central generator through which the level enters the Ramond Zhu
presentation; after replacing $L$ by $Z_{\rm R}$, the presentation is
independent of $k$ for $k\ne-h^\vee$.

We first determine the coefficient of Premet's quadratic central generator.
Throughout this paragraph the filtration on the finite $W$-algebra is
Premet's Kazhdan filtration: the generators coming from
$\mathfrak z_\chi(0)$ have degree $2$, those coming from
$\mathfrak z_\chi(1)$ have degree $3$, and the quadratic central generator
$C$ has degree $4$.  We use the corresponding filtration on the Ramond Zhu
algebra under the canonical De Sole--Kac comparison.

The canonical associative isomorphism is
\cite[Theorem~5.10]{DeSoleKac2006}.  The quasiclassical diagram of
\cite[Section~6, especially the discussion following Example~6.11]{DeSoleKac2006}
identifies the conformal-weight filtration on its finite Zhu algebra with the
Kazhdan filtration and identifies the associated graded with the classical
finite $W$-algebra, i.e. the Poisson algebra of the Slodowy slice.  Thus the
canonical Zhu--finite-$W$ comparison respects the filtration used here and
its principal-symbol map is the classical Hamiltonian-reduction map.  In the BRST
complex the energy-momentum element is the sum of the affine Sugawara term,
the improvement term, and the neutral and charged ghost terms; see
\cite[Section~5.1]{DeSoleKac2006}.  The Sugawara normalization
\cite[Equation~(1.60)]{DeSoleKac2006} is
$$
   2(k+h^\vee)L^{\mathfrak g}
   =\sum_i :x^i x_i:
$$
for dual bases $\{x_i\}$, $\{x^i\}$ of $\mathfrak g$.  Under the
classical BRST reduction, the improvement and ghost summands give the standard
BRST completion of this invariant class, while the scalar
$p_{\mathfrak g}(k)/2$ does not change its principal Kazhdan symbol.
Consequently the degree-four principal symbol of $Z_{\rm R}$ is the
Hamiltonian reduction of the normalized invariant quadratic polynomial
$$
   q=\sum_i x^i x_i
$$
with coefficient one.  Premet's element $C$ is the image of the same
quadratic Casimir under the canonical center map
$Z(U(\mathfrak g))\xrightarrow{\sim}Z(H)$; see
\cite[Corollary~5.1]{Premet2007}.  Thus
\begin{equation}
\label{eq:principal-symbol-ZR-C}
   \operatorname{gr}_4\iota(Z_{\rm R})=\operatorname{gr}_4 C.
\end{equation}

We also need to exclude a hidden lower-degree central correction.  The center
isomorphism is strict for the Kazhdan filtration: in the Harish--Chandra
realization the linear coordinates on $\mathfrak h^*$ have Kazhdan degree
$2$, and the induced isomorphism
$
 \mathbb C[\mathfrak h^*]^{W\!\cdot}\xrightarrow{\sim}Z(H)
$
is strict; see
\cite[Section~3.4, especially Lemma~3.3 and the paragraph following
Equation~(3.7)]{AmbrosioCarnovaleEspositoTopley2024}.  Since
$\mathfrak g$ is simple, the smallest positive ordinary degree of a
Weyl-invariant polynomial is $2$.  Hence the smallest positive Kazhdan
degree in $Z(H)$ is $4$, and therefore
$$
   Z(H)\cap F_3H=\mathbb C\,1.
$$
By \eqref{eq:principal-symbol-ZR-C}, the central element
$\iota(Z_{\rm R})-C$ lies in $F_3H$.  It must therefore be a scalar.  Thus
there is a unique $b\in\mathbb C$ such that
\begin{equation}
\label{eq:central-normalization-up-to-scalar}
   \iota(Z_{\rm R})=C+b.
\end{equation}
This fixes the multiplicative coefficient of $C$ independently of the
normalization of the degree-three generators and leaves only the additive
scalar $b$ to determine.

It remains to determine $b$.  Let $Q_{\rm P}(w(u),w(v))$ denote the
quadratic-current sum in Premet's presentation.  Theorem~6.1(iv) of
\cite{Premet2007} gives
\begin{equation}
\label{eq:Premet-quadratic-relation}
 [\Theta(w(u)),\Theta(w(v))]
 =\frac12(f_{\rm P}\mid[w(u),w(v)])
      \bigl(C-\Theta_{\rm Cas}-c_0\bigr)
   +\frac12Q_{\rm P}(w(u),w(v)).
\end{equation}
Apply $\iota$ to \eqref{eq:Ramond-quadratic-relation}.  Using
\eqref{eq:Ramond-Premet-pairing-scale},
\eqref{eq:Casimir-PBW-identification},
\eqref{eq:degree-one-generator-scale}, and
\eqref{eq:central-normalization-up-to-scalar}, we obtain
\begin{equation}
\label{eq:coefficient-comparison-before-normalization}
\begin{aligned}
 s^2[\Theta(w(u)),\Theta(w(v))]
   &=\kappa^{-1}(f_{\rm P}\mid[w(u),w(v)])
       \bigl(\Theta_{\rm Cas}-C-b\bigr)\\
   &\qquad+\iota\bigl(Q_{\rm R}(u,v)\bigr).
\end{aligned}
\end{equation}
No identification of the two $Q$-terms, and no choice of the relative
root-vector scale $\kappa$, has been used here.

We now justify the coefficient comparison in the PBW basis without hiding
any scalar contribution in the quadratic terms.  Equation~(5.1) of
\cite{KacMosenederFrajriaPapi2025} writes $Q_{\rm R}(u,v)$ explicitly as a
sum of symmetrized products of degree-zero
$\mathfrak g^\natural$-current generators.  Likewise,
\cite[Theorem~6.1(iv)]{Premet2007} writes
$Q_{\rm P}(w(u),w(v))$ explicitly as a sum of symmetrized products of the
corresponding $\Theta(\mathfrak g^\natural)$-generators.  Since $\iota$
identifies the current generators exactly and preserves multiplication,
$\iota(Q_{\rm R}(u,v))$ has the same structural form.  Reordering such
quadratic current monomials into Premet's ordered PBW basis can only use
$$
   [\Theta(x),\Theta(y)]=\Theta([x,y])
   \qquad(x,y\in\mathfrak g^\natural),
$$
so it can create at most linear current terms.  It cannot create either the
independent degree-four generator $C$ or the unit $1$.  Consequently the
PBW coefficients of both $C$ and $1$ in
$Q_{\rm P}(w(u),w(v))$ and $\iota(Q_{\rm R}(u,v))$ are zero.

Choose $u,v$ with
$(f_{\rm P}\mid[w(u),w(v)])\ne0$, which is possible because this alternating
form on $\mathfrak z_\chi(1)$ is nondegenerate.  Substitute
\eqref{eq:Premet-quadratic-relation} into the left-hand side of
\eqref{eq:coefficient-comparison-before-normalization} and compare first the
coefficient of $C$.  This gives
\begin{equation}
\label{eq:s-kappa-coefficient}
   \frac{s^2}{2}=-\kappa^{-1}.
\end{equation}
Combining this with \eqref{eq:Ramond-Premet-pairing-scale} yields immediately
\eqref{eq:derived-pairing-coefficient-comparison}.  Comparing next the scalar
coefficient gives
$$
   -\frac{s^2}{2}c_0=-\kappa^{-1}b.
$$
Using \eqref{eq:s-kappa-coefficient}, we obtain $b=-c_0$, proving
\eqref{eq:Ramond-Premet-normalization}.  Finally, substitute
$b=-c_0$ and \eqref{eq:s-kappa-coefficient} back into
\eqref{eq:coefficient-comparison-before-normalization}.  The complete
central-current part cancels and one is left with
$$
   \iota\bigl(Q_{\rm R}(u,v)\bigr)
   =\frac{s^2}{2}Q_{\rm P}(w(u),w(v)),
$$
which proves \eqref{eq:derived-quadratic-term-comparison}.

Thus both exact coefficient identities are conclusions of the filtered and
PBW comparison, with the relative root-vector scale tracked explicitly rather
than silently fixed.  In particular, the value of $b$, and hence the
identity $\iota(Z_{\rm R})=C-c_0$, is independent of $\kappa$.  As an
independent check,
\cite[Remark~5.8]{KacMosenederFrajriaPapi2025} obtains from the same
$k$-independent Ramond presentation the constant
$$
   c_0=-\frac12p_{\mathfrak g}(-h^\vee),
$$
in agreement with the normalization above.
\end{proof}

We henceforth suppress $\iota$ and regard
\eqref{eq:Ramond-Premet-normalization} as the equality
$L'=C-c_0$ in the fixed PBW identification.  There is no residual
multiplicative or additive ambiguity: the coefficient of $C$ is fixed by
the quadratic associated-graded symbol, and the scalar shift is fixed by the
PBW coefficient comparison in the lemma.
For $D_\ell$, \cite[Section~3.1.2]{Jin2026} gives
$p_{D_\ell}(k)=(k+2)(k+r)$.  Hence at $k=-1$, since
$k+h^\vee=2r+1=N$,
\begin{equation}
\label{eq:Premet-C-L-relation}
   C-c_0
   =
   2N L+\frac{r-1}{2}.
\end{equation}

\begin{lemma}
\label{lem:vacuum-C-zero}
The quadratic central element $C$ acts on the twisted vacuum by zero.
\end{lemma}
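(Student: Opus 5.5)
The plan is to evaluate $C$ on the twisted vacuum through the exact central normalization \eqref{eq:Premet-C-L-relation}, namely $C=2NL+\frac{r-1}{2}+c_0$. This reduces the lemma to computing the eigenvalue of the Ramond conformal class $L=[\omega]_{\mathrm R}$ on the vacuum line of $\widetilde{\mathcal W}_\ell^{\,R,*}$.

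First I need the twisted module to give a module for the Ramond Zhu algebra on its lowest eigenspace. By the Lower boundedness lemma, $\operatorname{Spec}L_0^{R,*}\subset h_*+\mathbb Z_{\ge0}$, and $\mathbf 1$ lies in the lowest eigenspace. By the Ramond lemma, the twist $\exp(2\pi i\,x_{*,0})$ is the Ramond automorphism. Hence, by the standard Zhu theory for $\sigma$-twisted modules (in the $L_0$-twisted convention of De Sole--Kac recalled before Proposition~\ref{prop:B-finite-BRST}), this lowest eigenspace is a module for $A_{\mathrm R}(\widetilde{\mathcal W}_\ell)=B_\ell$. On it, $[\omega]_{\mathrm R}$ acts by $L_0^{R,*}$, and the central element $C$ acts through $\pi_\ell$ of Corollary~\ref{cor:H-to-B-quotient}.

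Next I compute the eigenvalue of $L_0^{R,*}$ on the vacuum. From $L_0^{R}=L_0+x_{*,0}+\kappa_{x_*}/2$, together with $L_0\mathbf 1=0$ and $x_{*,0}\mathbf 1=J^{\{x_*\}}_{(0)}\mathbf 1=0$, the vacuum has eigenvalue exactly $h_*=\frac{r+1}{4}$ by \eqref{eq:double-vacuum-energy}. Since $C$ is central it commutes with $L_0^{R,*}$ and so preserves the lowest eigenspace. Combining with \eqref{eq:Premet-C-L-relation} and $N=2r+1$, $c_0=-r(r+2)$, $C$ acts on $\mathbf 1$ by the scalar
\[
  2N h_*+\frac{r-1}{2}+c_0
  =\frac{(2r+1)(r+1)}{2}+\frac{r-1}{2}-r(r+2)
  =r^2+2r-r(r+2)=0 .
\]
The point to respect is that this uses only the vacuum line, never an irreducibility claim for the full lowest eigenspace, in line with the preceding Remark.

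As an independent check I would use Lemma~\ref{lem:Premet-shift-D}. The vacuum is a highest-weight vector of shifted $\mathfrak g^\natural$-weight $\lambda_*=\bar\delta$, since positive current modes and the raising $G$-modes of zero excess all kill $\mathbf 1$. In Premet's parametrization this is the Harish--Chandra parameter $\mu$ with $\mu+\rho$ having vanishing norm shift, so that $(\mu,\mu+2\rho)=0$. I expect the main obstacle to be the first step: making sure the $L_0$-twisted Zhu action of $[\omega]_{\mathrm R}$ on the lowest space of Li's twisted adjoint module is really $L_0^{R,*}$, with the constant $\kappa_{x_*}/2$ included. That constant is exactly what absorbs the $(r-1)/2$ and $c_0$ terms. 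I would verify it directly from Li's $\Delta(x,z)$ formula applied to $\omega$.
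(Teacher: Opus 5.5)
Your argument is the paper's: substitute the vacuum energy $h_*=\frac{r+1}{4}$ from \eqref{eq:double-vacuum-energy} into $C=2NL+\frac{r-1}{2}+c_0$ from \eqref{eq:Premet-C-L-relation}, and check that $2Nh_*+\frac{r-1}{2}-r(r+2)=0$; your extra justification that $[\omega]_{\mathrm R}$ acts on $\mathbf 1$ by $h_*$ spells out what the paper uses implicitly. Drop the proposed ``independent check'', though: the $\mathfrak g^\natural$-weight $\lambda_*=\bar\delta$ only fixes $\bar\nu=0$, and it leaves the second Premet parameter $\frac12 m(m+2)$, with $m=\langle\nu,\beta^\vee\rangle$, undetermined, so that check is circular (the paper deduces the highest pair $(\bar\delta,0)$ from this lemma, not conversely).
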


\begin{proof}
By \eqref{eq:double-vacuum-energy} and
\eqref{eq:Premet-C-L-relation},
$$
\begin{aligned}
   C\mathbf1
   &=
   \left(
      -r(r+2)
      +2(2r+1)\frac{r+1}{4}
      +\frac{r-1}{2}
   \right)\mathbf1\\
   &=0.
\end{aligned}
$$
\end{proof}

Let
$$
   T_*:=
   \widetilde{\mathcal W}_\ell^{\,R,*}[h_*]
$$
be the lowest Ramond eigenspace.  It is naturally a module over $B_\ell$, hence, by restriction of scalars
along the explicit quotient map $\pi_\ell:H\twoheadrightarrow B_\ell$ from
Corollary~\ref{cor:H-to-B-quotient}, a module over $H$.

\begin{lemma}[Triangular generators and their Li zero modes]
\label{lem:Premet-positive-Li-modes}
Use the positive system of Appendix~\ref{app:type-D-calculations}.  Under the
PBW-normalized Ramond-Zhu/finite-$W$ identification, the generators
$\Theta(e_\alpha)$, for
$e_\alpha\in\mathfrak n^+(0)$, are nonzero scalar multiples of the
Ramond Zhu classes of $J^{\{e_\alpha\}}$.  Likewise
$\Theta(u_i^*)$, for $u_i^*=[e_\beta,z_i^*]\in\mathfrak n^+(1)$,
are nonzero scalar multiples of the classes of the corresponding
$G^{\{v_i\}}$.  If $a$ is one of these current or $G$-generators and
has $x_{*,0}$-charge $q$, then, in untwisted adjoint-mode notation,
\begin{equation}
\label{eq:Li-zero-mode-shift}
   a^{R,*}_0=a_q.
\end{equation}
\end{lemma}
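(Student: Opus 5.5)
The lemma has two parts, and I would handle them separately: first the identification of the triangular generators, then the Li zero-mode formula \eqref{eq:Li-zero-mode-shift}.

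\textbf{Identification of generators.} I would work in the canonical filtered De Sole--Kac identification $\iota$ fixed before Lemma~\ref{lem:exact-Ramond-Premet-normalization}. In degree zero, $\iota$ matches the $\mathfrak g^\natural$-generators exactly: the Ramond Zhu class of $J^{\{a\}}$ goes to $\Theta(a)$. The only point to check is that the positive system of Appendix~\ref{app:type-D-calculations} on $\mathfrak g^\natural=A_1\oplus D_r$ is the one used to define $\mathfrak n^+(0)$. It is, since the transport by $w$ in \eqref{eq:Premet-transported-triple} fixes $\mathfrak g^\natural$ up to the chosen representative. Hence $\Theta(e_\alpha)$ is a nonzero multiple of $[J^{\{e_\alpha\}}]_{\mathrm R}$, the scalar accounting for the normalization of $e_\alpha$.

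In degree one I would use \eqref{eq:degree-one-generator-scale}, namely $\iota(\mathsf G(u))=s\,\Theta(w(u))$ with $s\neq0$. This already excludes lower-PBW corrections, by the coset argument on $P/Q$. So it suffices to choose $v_i\in U$ with $w(v_i)=u_i^*$, which is possible by \eqref{eq:U-to-Premet-degree-one}. Then $\Theta(u_i^*)=s^{-1}[G^{\{v_i\}}]_{\mathrm R}$. Because $w$ intertwines the two $\mathfrak h^\natural$-actions, $v_i$ is a weight vector and has a well-defined $x_{*,0}$-charge.

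\textbf{Zero-mode formula.} For a field $a$ with $x_{*,0}$-charge $q$, Li's $\Delta(x_*,z)$-twist turns $Y(a,z)$ into $Y(\Delta(x_*,z)a,z)$. Since $x_{*,0}a=qa$, the factor $z^{x_{*,0}}$ contributes $z^{q}$. For the current generators $J^{\{e_\alpha\}}$ (and the $G$'s, which are $\mathfrak g^\natural$-primary) there is in addition a correction term $J^{\{x_*\}}_{(1)}a\,z^{-1}$. This vanishes for $e_\alpha$, because $(x_*\mid e_\alpha)=0$ for a root vector. It also vanishes for $G^{\{v\}}$ by weight considerations.

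Thus $Y^{R,*}(a,z)=\sum_n a_n z^{-n-\mathrm{wt}(a)}z^{q}$. Writing the twisted modes as coefficients of $z^{-m-\mathrm{wt}(a)}$, as in $L_0^{R}=L_0+x_0+\kappa_x/2$, the mode $a^{R,*}_m$ equals $a_{m+q}$. Taking $m=0$ gives $a^{R,*}_0=a_q$.

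\textbf{Main obstacle.} The delicate step is the sign and indexing convention: whether Li's twist yields $a_q$ or $a_{-q}$, and whether the Ramond Zhu class corresponds to the zero mode under the \emph{twisted} grading $L_0^{R,*}$ rather than the ordinary one. I would fix this by testing against the formula $L_0^{R,*}=L_0+x_{*,0}+\kappa/2$ already recorded, for which the same derivation must reproduce the shift $x_{*,0}$. I would also check that the Cartan case $q=0$ reproduces the shifted Cartan zero modes $J^{\{h\},R,*}_0$. Once the conventions match these two checks, the formula follows.
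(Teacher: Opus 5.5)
Your proposal is correct and follows the paper's proof essentially step for step. The currents are handled by the degree-zero part of the De Sole--Kac comparison, and the $G$'s by the exact degree-one comparison \eqref{eq:degree-one-generator-scale}, where the root-lattice obstruction rules out lower-PBW terms. The zero-mode formula comes from Li's $\Delta(x_*,z)$ once every positive-mode correction is shown to vanish: $(x_*\mid e_\alpha)=0$ for the currents, and for $G$ the current--$G$ OPE has only a simple pole (equivalently, there are no weight-$\tfrac12$ states). That leaves the factor $z^{q}$ and hence $a^{R,*}_0=a_q$. One minor slip: the transport by $w$ does not literally fix $\mathfrak g^\natural$. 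It carries the centralizer of the highest-root triple onto the centralizer of the $\alpha_2$-triple, which is a different subalgebra, but that identification is all the degree-zero comparison needs.
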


\begin{proof}
   The current statement follows from the degree-zero part of the canonical
   De Sole--Kac PBW comparison.  For the degree-one statement, the
   associated-graded comparison and the root-lattice obstruction in the
   paragraph preceding Lemma~\ref{lem:exact-Ramond-Premet-normalization} show
   that no lower-PBW correction is possible.  Thus the generator-level
   comparison is exact;
   see also
   \cite[Equation~(5.1) and Remark~5.8]{KacMosenederFrajriaPapi2025}.
   The nonzero normalizing scalars do not affect a highest-vector condition.

Li's delta operator gives \eqref{eq:Li-zero-mode-shift} provided
$J^{\{x_*\}}_{(n)}a=0$ for every $n\geq1$.  For
$a=J^{\{e_\alpha\}}$, the current OPE has no pole of order greater than
two, and its double-pole coefficient is proportional to
$(x_*,e_\alpha)=0$; hence all these positive products vanish.  For
$a=G^{\{v_i\}}$, the current--$G$ OPE has only the simple pole encoding
the $\mathfrak g^\natural$-action, so the same vanishing holds.  Thus the
positive-mode exponential in Li's delta operator acts trivially on precisely
the generators used here, leaving only the factor $z^q$, which proves
\eqref{eq:Li-zero-mode-shift}.  Notice that this assertion is not made for
an arbitrary charged vector; in particular, the Cartan-current scalar shift
was computed separately above.
\end{proof}

\begin{proposition}[The vacuum finite-$W$ highest vector]
\label{prop:vacuum-finite-W-highest}
The vacuum vector
$$
   \mathbf1\in T_*
$$
is a finite-$W$ highest-weight vector with Premet highest pair
\begin{equation}
   (\bar\delta,0).
\end{equation}
\end{proposition}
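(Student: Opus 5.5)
We need to check three things for $\mathbf 1\in T_*$: it is killed by Premet's positive generators, its $\mathfrak h^\natural$-weight is $\bar\delta$, and $C$ acts on it by the value $0$ recorded in the highest pair. In Premet's highest-weight theory a highest vector of type $(\lambda,c)$ is a vector annihilated by the triangular generators $\Theta(e_\alpha)$ for $e_\alpha\in\mathfrak n^+(0)$ and $\Theta(u_i^*)$ for $u_i^*\in\mathfrak n^+(1)$, on which $\Theta(\mathfrak h^\natural)$ acts by $\lambda$ and $C$ acts by $c$. All three conditions are read off in $T_*$ through the restriction $\pi_\ell:H\twoheadrightarrow B_\ell$ of Corollary~\ref{cor:H-to-B-quotient}.

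\textbf{Weight and central character.} These are already in hand. By the shifted Cartan zero-mode formula, the vacuum has shifted $\mathfrak g^\natural$-weight $\lambda_*=(r-1)\varpi+\eta_1$, see \eqref{eq:double-vacuum-weight}. Lemma~\ref{lem:Premet-shift-D} gives $\lambda_*=\bar\delta$. The identification of $\Theta(h)$ with the Ramond Zhu class of $J^{\{h\}}$ is the exact degree-zero part of the PBW comparison, so under it $\Theta(h)$ acts on $\mathbf 1$ by the shifted zero mode $J^{\{h\},R,*}_0$. Hence $\Theta(h)$ acts by $\bar\delta(h)$. Lemma~\ref{lem:vacuum-C-zero} then gives $C\mathbf 1=0$.

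\textbf{Annihilation by the positive generators.} By Lemma~\ref{lem:Premet-positive-Li-modes}, each positive generator acts on $T_*$, up to a nonzero scalar, as $a^{R,*}_0=a_q$. Here $a$ is $J^{\{e_\alpha\}}$ or $G^{\{v_i\}}$ and $q$ is its $x_*$-charge. The plan is to show $a_q\mathbf 1=0$ in the adjoint module. The vacuum axiom gives $a_{(n)}\mathbf 1=0$ for $n\ge0$. For a field of conformal weight $\Delta$, the mode $a_m$ equals $a_{(m+\Delta-1)}$, so $a_q\mathbf 1=0$ as soon as $q+\Delta-1\ge 0$. For currents ($\Delta=1$) this needs $q\ge 0$. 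For $G$-fields ($\Delta=3/2$) it needs $q\ge-\tfrac12$.

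So the task reduces to a charge check using the positive system of Appendix~\ref{app:type-D-calculations}. First, every root vector in $\mathfrak n^+(0)$ should have $x_*$-charge in $\{0,1\}$, never $-1$. Second, every $v_i$ attached to $u_i^*\in\mathfrak n^+(1)$ should have charge in $\{-\tfrac12,\tfrac12,\tfrac32\}$, never $-\tfrac32$. The mechanism is that $x_*=\varpi^\vee+\eta_1^\vee$ is dominant for the chosen positive system of $A_1\oplus D_r$, which gives the current part. The weights of $U$ appearing in $\mathfrak n^+(1)$ correspond, after transport by $w$, to the positive-charge half in Premet's convention. Their $x_*$-charges are $\pm\tfrac12+\langle\pm\varepsilon_j,\eta_1\rangle$, and the value $-\tfrac32$ occurs only for the weight $-\varpi-\varepsilon_1$, which should be the lowest weight and hence lie in $\mathfrak n^-(1)$. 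One can also argue by energy: $a_q\mathbf 1$ has twisted energy $h_*+\epsilon_*(a)-\Delta-q$, and by the Lower boundedness lemma this is zero excess below the vacuum exactly in the excluded case.

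\textbf{Main obstacle.} I expect the hard step to be matching Premet's $\mathfrak n^+(1)$, defined through $u_i^*=[e_\beta,z_i^*]$ after transport by $w$ with $w(-\theta)=\alpha_2$, with the Ramond indexing $G^{\{v_i\}}$. This is needed to confirm that no generator of charge $-\tfrac32$ lies in $\mathfrak n^+(1)$. A sign flip of the grading from \eqref{eq:U-to-Premet-degree-one} could exchange $\pm$ here. I would first verify the correspondence directly from the type-$D$ root tables of the appendix, checking that the $D_r$-highest weight $\varepsilon_1$ paired with $+\varpi$ lands in $\mathfrak n^+(1)$. Once this is settled, combining the three conditions gives the highest pair $(\bar\delta,0)$.
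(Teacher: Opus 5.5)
Your proposal is correct and follows essentially the same route as the paper: the pair $(\bar\delta,0)$ comes from \eqref{eq:double-vacuum-weight}, Lemma~\ref{lem:Premet-shift-D} and Lemma~\ref{lem:vacuum-C-zero}, and annihilation comes from Lemma~\ref{lem:Premet-positive-Li-modes} combined with the creation property once the charges are read off from the root table in Appendix~\ref{app:type-D-calculations}. That table gives exactly the charges $\{0,1\}$ on $\mathfrak n^+(0)$ and $\{\tfrac12,\tfrac32\}$ on $\mathfrak n^+(1)$, so your weaker threshold $q\ge-\tfrac12$ is met with room to spare and your lowest-weight argument is not needed. You should drop the side ``energy'' remark: $a_q\mathbf 1$ always has twisted energy exactly $h_*$, so the lower-boundedness lemma cannot detect vanishing, and what actually forces $a_q\mathbf 1=0$ is its untwisted conformal weight $-q$, i.e.\ the creation property.
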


\begin{proof}
The two highest parameters are $\bar\delta$ and $0$ by
\eqref{eq:double-vacuum-weight}, Lemma~\ref{lem:Premet-shift-D}, and
Lemma~\ref{lem:vacuum-C-zero}.  Premet's positive part is generated by
$\Theta(\mathfrak n^+(0))$ and $\Theta(\mathfrak n^+(1))$
\cite[Section~7]{Premet2007}.  Lemma~\ref{lem:Premet-positive-Li-modes}
identifies them, generator by generator, with the positive current and
weight-$3/2$ Ramond zero modes.

For $\Theta(\mathfrak n^+(0))$, Appendix~\ref{app:type-D-calculations}
gives charges $q\in\{0,1\}$.  Premet's $\mathfrak n^+(1)$ is spanned by
$u_i^*=[e_\beta,z_i^*]$, of root $\beta+\gamma_i^*$; since
$\beta(x_*)=0$, its charge is $\gamma_i^*(x_*)$, and the chosen positive
half has charges $q\in\{\frac12,\frac32\}$.  Thus the positive finite-$W$
generators act on $\mathbf1$ through the untwisted modes
$$
   J_q\quad(q=0,1),
   \qquad
   G_q\quad\left(q=\frac12,\frac32\right),
$$
all of which annihilate the vacuum by the creation property.  In particular,
the Li twist does not exchange Premet's chosen positive half with its dual.
Hence $\mathbf1$ is a finite-$W$ highest-weight vector.
\end{proof}

Let
\begin{equation}
   C_*:=B_\ell\mathbf1\subset T_*.
\end{equation}
Since $B_\ell$ is finite dimensional, $C_*$ is finite dimensional.  Any
proper submodule of $C_*$ misses its cyclic generator $\mathbf1$, so a
simple quotient of $C_*$ retains a nonzero image of the vacuum highest
line.  Premet's highest-weight theory therefore gives the following.

\begin{corollary}
\label{cor:Mstar-B-module}
The finite-$W$ simple module
\begin{equation}
   M_*:=L_H(\bar\delta,0)
\end{equation}
is a $B_\ell$-module.
\end{corollary}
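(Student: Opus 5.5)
The plan is to realize $M_*$ as a simple subquotient of the cyclic $B_\ell$-module $C_*=B_\ell\mathbf1$ inside the lowest Ramond eigenspace $T_*$, and then to identify that subquotient with $L_H(\bar\delta,0)$ using Premet's highest-weight theory.

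First, $T_*$ is a $B_\ell$-module. The twisted module $\widetilde{\mathcal W}_\ell^{\,R,*}$ is a Ramond-twisted module for $\widetilde{\mathcal W}_\ell$, since the Lemma identifies $\exp(2\pi i\,x_{*,0})$ with the Ramond automorphism. Its lowest $L_0^{R,*}$-eigenspace is therefore a module over $A_{\mathrm R}(\widetilde{\mathcal W}_\ell)=B_\ell$, and the Lower boundedness lemma guarantees that the lowest eigenspace is exactly $T_*=\widetilde{\mathcal W}_\ell^{\,R,*}[h_*]$. By restriction along $\pi_\ell$ of Corollary~\ref{cor:H-to-B-quotient}, $T_*$ and $C_*$ are $H$-modules whose $H$-action factors through $B_\ell$. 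Because $B_\ell$ is finite dimensional, $C_*$ is finite dimensional and nonzero, as it contains $\mathbf1$.

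Second, pick a maximal proper $B_\ell$-submodule $N\subset C_*$; this exists by finite dimensionality. It is also a maximal proper $H$-submodule, since the $H$-action factors through the surjection $\pi_\ell$. We have $\mathbf1\notin N$, because $\mathbf1$ generates $C_*$. Hence $S:=C_*/N$ is a simple $H$-module, and the image $\bar{\mathbf1}$ of the vacuum is nonzero. By Proposition~\ref{prop:vacuum-finite-W-highest}, $\mathbf1$ is killed by Premet's positive generators $\Theta(\mathfrak n^+(0))$ and $\Theta(\mathfrak n^+(1))$. It has $\mathfrak h^\natural$-weight $\bar\delta$ and $C$-eigenvalue $0$. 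All of these properties pass to the quotient, so $\bar{\mathbf1}$ is a highest-weight vector of type $(\bar\delta,0)$ in $S$.

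Third, $S$ is simple and generated by $\bar{\mathbf1}$, so it is a simple quotient of Premet's Verma-type module with highest pair $(\bar\delta,0)$. Premet's theory \cite[Section~7]{Premet2007} says this Verma-type module has a unique simple quotient, so $S\cong L_H(\bar\delta,0)=M_*$. Since $N$ is $B_\ell$-stable and $S$ is a $B_\ell$-module, $M_*$ is a $B_\ell$-module, meaning the $H$-action on it factors through $\pi_\ell$.

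The main obstacle is the uniqueness step. One must check that Premet's highest pair $(\bar\delta,0)$ really determines the simple module. Here the generators of the highest-weight data are $\mathfrak h^\natural$ together with the center $Z(H)$, and for the minimal nilpotent $Z(H)$ is generated, beyond the $\mathfrak h^\natural$-weight, by $C$ in the relevant sense, as encoded in Premet's pair convention. One must also confirm that the positive system used in Lemma~\ref{lem:Premet-positive-Li-modes} matches Premet's, as arranged in Appendix~\ref{app:type-D-calculations}. Granting these conventions, the argument above is complete.
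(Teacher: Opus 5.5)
Your proposal is correct and follows the paper's own argument. Both take a simple quotient of the finite-dimensional cyclic module $C_*=B_\ell\mathbf1$, observe that the image of the vacuum survives as a highest-weight vector of type $(\bar\delta,0)$ by Proposition~\ref{prop:vacuum-finite-W-highest}, and conclude from uniqueness of the simple quotient of Premet's Verma module. Your extra justification that $T_*$ is a $B_\ell$-module, via the Ramond-automorphism lemma and lower boundedness, only spells out what the paper calls natural.
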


\begin{proof}
Take a simple quotient of the cyclic $B_\ell$-module $C_*$.  By
Proposition~\ref{prop:vacuum-finite-W-highest}, it is a simple highest-weight
$H$-module with highest pair $(\bar\delta,0)$.  The uniqueness of the
simple quotient of the corresponding finite-$W$ Verma module
\cite[Section~7]{Premet2007} identifies it with
$L_H(\bar\delta,0)$.
\end{proof}

\begin{lemma}[Rank-one Whittaker parameter comparison]
\label{lem:premet-ms-parameter}
Put $\beta=\alpha_2$, and let $\psi_\beta$ be the Whittaker
character supported on the $\beta$-root space, normalized by
$\psi_\beta(e_\beta)=1$.  For $\nu\in\mathfrak h^*$, write
$$
   \lambda=\bar\nu\in(\mathfrak h^\natural)^*,
   \qquad
   m=\langle\nu,\beta^\vee\rangle,
   \qquad
   c_\beta(\nu)=\frac12m(m+2).
$$
Here $\bar\nu$ is the restriction to
$\mathfrak h^\natural=\beta^\perp$, identified with the orthogonal
projection by Premet's bilinear form.  If $M_{\rm P}(\lambda,c)$ denotes
Premet's parabolically induced module and $M_{\rm MS}(\nu,\psi_\beta)$
the Mili\v{c}i\'c--Soergel standard Whittaker module, choose a representative
$\dot s_\beta$ of $s_\beta$, composed if necessary with a torus element,
so that $\operatorname{Ad}(\dot s_\beta)(f_\beta)=e_\beta$.  For a
$\mathfrak g$-module $M$, write ${}^{\dot s_\beta}M$ for the inner
twist in which $x\in\mathfrak g$ acts as
$\operatorname{Ad}(\dot s_\beta^{-1})(x)$.  Then
\begin{equation}
\label{eq:Premet-MS-standard-comparison}
   {}^{\dot s_\beta}M_{\rm P}\!\left(\bar\nu,c_\beta(\nu)\right)
   \cong
   M_{\rm MS}(\nu,\psi_\beta).
\end{equation}
Moreover Premet's Whittaker functor gives
\begin{equation}
\label{eq:Premet-MS-finiteW-parameters}
   \operatorname{Wh}
   M_{\rm P}\!\left(\bar\nu,c_\beta(\nu)\right)
   \cong
   Z_H\!\left(
      \bar\nu+\bar\delta,
      (\nu,\nu+2\rho)
   \right).
\end{equation}
Both sides depend only on the
$\langle s_\beta\rangle$-dot orbit of $\nu$.
\end{lemma}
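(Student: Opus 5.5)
The plan is to realize both sides of \eqref{eq:Premet-MS-standard-comparison} as modules parabolically induced from the same rank-one Levi datum. The two inductions differ only by the inner twist that exchanges $f_\beta$ and $e_\beta$.

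\textbf{Setup.} Let $\mathfrak l_\beta=\mathfrak h+\mathfrak g_\beta+\mathfrak g_{-\beta}\cong\mathfrak{sl}_2^{(\beta)}\oplus\mathfrak h^\natural$, and let $\mathfrak p=\mathfrak l_\beta\oplus\mathfrak u$ be the standard parabolic with nilradical $\mathfrak u$ spanned by the positive roots other than $\beta$.
\begin{itemize}
\item Mili\v{c}i\'c--Soergel define $M_{\rm MS}(\nu,\psi_\beta)=U(\mathfrak g)\otimes_{U(\mathfrak p)}Y_{\mathfrak l_\beta}(\nu,\psi_\beta)$. Here $Y_{\mathfrak l_\beta}(\nu,\psi_\beta)$ is $U(\mathfrak l_\beta)$ modulo three relations: $e_\beta-1$; the $\mathfrak{sl}_2$-Casimir minus its value at $m=\langle\nu,\beta^\vee\rangle$; and $h-\nu(h)$ for $h\in\mathfrak h^\natural$. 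The module $\mathfrak u$ acts by zero.
\item Premet's $M_{\rm P}(\lambda,c)$ is induced the same way from the opposite orientation. Its Whittaker condition sits on the root vector paired by $\chi=(e_{\rm P}\mid\cdot)$, namely $f_\beta$ up to the fixed scale $(e_{\rm P}\mid f_{\rm P})=1$. Its $\mathfrak{sl}_2$ central parameter is $c$, and its $\mathfrak h^\natural$-weight is $\lambda$.
\end{itemize}

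\textbf{First isomorphism.} I will first check that $\operatorname{Ad}(\dot s_\beta)$ preserves $\mathfrak h^\natural=\beta^\perp$ pointwise and preserves $\mathfrak u$, since $s_\beta$ permutes $\Phi^+\setminus\{\beta\}$. By the choice of representative it also sends $f_\beta\mapsto e_\beta$. It therefore stabilizes $\mathfrak p$ and carries Premet's inducing datum to the Mili\v{c}i\'c--Soergel one with the following matching of parameters.
\begin{itemize}
\item $\bar\nu=\lambda$, because $s_\beta$ fixes $\beta^\perp$.
\item The twist carries the $\mathfrak{sl}_2$-Casimir to itself. In the normalization where the weight-$m$ highest-weight module has value $\frac12 m(m+2)$, this central character is exactly $c_\beta(\nu)$.
\end{itemize}
Since twisting commutes with induction, this gives \eqref{eq:Premet-MS-standard-comparison}. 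The torus correction in $\dot s_\beta$ only rescales $e_\beta$, and the normalization $\psi_\beta(e_\beta)=1$ absorbs it.

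\textbf{Second isomorphism.} For \eqref{eq:Premet-MS-finiteW-parameters} I will invoke Premet's theorem \cite[Section~7]{Premet2007}: the Whittaker functor applied to his parabolically induced module yields the finite-$W$ Verma module $Z_H$. That theorem fixes both parameters.
\begin{itemize}
\item The $\mathfrak h^\natural$-parameter is $\lambda+\bar\delta$, with $\bar\delta$ the shift of Lemma~\ref{lem:Premet-shift-D}, recorded in Appendix~\ref{app:type-D-calculations}.
\item The $C$-eigenvalue equals the eigenvalue of the $\mathfrak g$-Casimir on the induced module. This is because $C$ is the image of that Casimir under $Z(U(\mathfrak g))\cong Z(H)$.
\end{itemize}
The induced module has a $\mathfrak g$-central character, which I compute through \eqref{eq:Premet-MS-standard-comparison}. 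The Mili\v{c}i\'c--Soergel standard module has the infinitesimal character of $\nu$, so the Casimir acts by $(\nu,\nu+2\rho)$ in the normalization of $C$ fixed above. The twist does not change central characters.

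\textbf{Orbit independence.} For the final assertion, note that $\bar\nu$ is unchanged under $\nu\mapsto s_\beta\circ\nu$, since $\rho$ and $\nu$ differ from their reflections only along $\beta$. Under this map $m\mapsto -m-2$, which leaves $m(m+2)$ and hence $c_\beta(\nu)$ invariant. The quantity $(\nu,\nu+2\rho)=|\nu+\rho|^2-|\rho|^2$ is $W$-dot invariant.

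\textbf{Main obstacle.} I expect the main difficulty to be bookkeeping the normalizations rather than any structural step. Three points must be reconciled:
\begin{itemize}
\item Premet's $\mathfrak{sl}_2$-Casimir normalization must match $c_\beta$, including a possible $\rho_\beta$-shift.
\item The sign or orientation of the Whittaker character after transport by $w$ in \eqref{eq:Premet-transported-triple} must be tracked.
\item One must confirm that Premet's $\bar\delta$ enters additively on the finite-$W$ side and not on the induced-module side.
\end{itemize}
I would settle these by testing the formulas on the rank-one case $\mathfrak g=\mathfrak{sl}_2$, where $\mathfrak h^\natural=0$ and everything is explicit.
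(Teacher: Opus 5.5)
Your argument is correct. For \eqref{eq:Premet-MS-standard-comparison} it is the paper's argument: twist by $\dot s_\beta$, which fixes $\mathfrak h^\natural$ pointwise, stabilizes the span of $\Phi^+\setminus\{\beta\}$, preserves $C_\beta$, and turns Premet's relation $f_\beta-1$ into $e_\beta-1$.

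There is one simplification in that half. You take the Mili\v{c}i\'c--Soergel Levi module to be Kostant's cyclic quotient of $U(\mathfrak l_\beta)$ by $e_\beta-1$ and the kernel of the relative central character. Then the two inducing modules match relation for relation. The paper instead describes the Levi module as the simple one. It therefore has to prove, via the difference model on $\mathbb C[t]$, that the cyclic quotient is already simple; this is a rank-one case of Kostant's theorem.

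For \eqref{eq:Premet-MS-finiteW-parameters} your route is genuinely different.
\begin{itemize}
\item You use only the $\mathfrak h^\natural$-parameter from Premet's Theorem~1.3. You read off the $C$-eigenvalue from the infinitesimal character of $M_{\rm MS}(\nu,\psi_\beta)$, transported through the first isomorphism and the compatibility of $\operatorname{Wh}$ with $Z(U(\mathfrak g))\to Z(H)$.
\item The paper instead uses Premet's explicit second parameter $c+(\lambda,\lambda+2\bar\rho)$ and the orthogonal decomposition $\nu=\bar\nu+\frac m2\beta$, $\rho=\bar\rho+\frac12\beta$.
\end{itemize}
Your version avoids that arithmetic. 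However, it makes the second isomorphism depend on the first, and on $C$ being exactly the image of the quadratic Casimir. The paper's computation is independent of the first isomorphism. It also serves as a cross-check between the normalization of $C_\beta$ and the $(\mu,\mu+2\rho)$ normalization of $C$.

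The obstacles you list are not actually open.
\begin{itemize}
\item Premet's $C_\beta=e_\beta f_\beta+f_\beta e_\beta+\frac12h_\beta^2$ takes the value $m+\frac12m^2=c_\beta(\nu)$ on highest weight $m$. So no $\mathfrak{sl}_2$ test is needed.
\item The transport by $w$ plays no role here, since the lemma lives entirely in the $\beta$-model.
\item $\bar\delta$ sits on the finite-$W$ side by the very form of Premet's Theorem~1.3.
\end{itemize}
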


\begin{proof}
For the character $\psi_\beta$, the Mili\v{c}i\'c--Soergel support is
$\Delta_{\psi_\beta}=\{\beta\}$.  Hence their parabolic and Levi are
\begin{equation}
\label{eq:MS-rank-one-parabolic}
   \mathfrak p_{\psi_\beta}
   =\mathfrak g_{\psi_\beta}\oplus\mathfrak n_\beta,
   \qquad
   \mathfrak g_{\psi_\beta}
   =\mathfrak h^\natural\oplus\mathfrak s_\beta,
   \qquad
   \mathfrak n_\beta
   =\bigoplus_{\gamma\in\Phi^+\setminus\{\beta\}}\mathfrak g_\gamma,
\end{equation}
with $\mathfrak s_\beta\cong\mathfrak{sl}_2$; see
\cite[Section~2]{MilicicSoergel1997}.  This is precisely the parabolic
$\mathfrak p_\beta$ used by Premet in the construction preceding
\cite[Theorem~1.3]{Premet2007}.

We compare the inducing modules themselves, rather than only their final
central characters.  In Premet's notation the inducing module is
$$
   Y_{\rm P}(\lambda,c)
   =U(\mathfrak p_\beta)/I_\beta(\lambda,c),
$$
where the left ideal $I_\beta(\lambda,c)$ is generated by
\begin{equation}
\label{eq:Premet-inducing-relations}
   f_\beta-1,
   \qquad C_\beta-c,
   \qquad h-\lambda(h)\ \ (h\in\mathfrak h^\natural),
   \qquad e_\gamma\ \ (\gamma\in\Phi^+\setminus\{\beta\}),
\end{equation}
with
$$
   C_\beta=e_\beta f_\beta+f_\beta e_\beta
            +\frac12h_\beta^2;
$$
see the definition of $M(\lambda,c)$ in
\cite[Section~7.3]{Premet2007}.  Mili\v{c}i\'c--Soergel define
$$
   Y_{\psi_\beta}(\xi_{\psi_\beta}(\nu),\psi_\beta)
$$
as the simple Whittaker module for the Levi
$\mathfrak g_{\psi_\beta}$ with relative Harish--Chandra character
$\xi_{\psi_\beta}(\nu)$, and then set
\begin{equation}
\label{eq:MS-standard-definition}
   M_{\rm MS}(\nu,\psi_\beta)
   =U(\mathfrak g)\otimes_{U(\mathfrak p_\beta)}
    Y_{\psi_\beta}(\xi_{\psi_\beta}(\nu),\psi_\beta).
\end{equation}
See \cite[Section~2]{MilicicSoergel1997}.  We now check the cyclic
presentation directly.

On the central $\mathfrak h^\natural$-factor the relative
Harish--Chandra character is $\bar\nu$.  On the
$\mathfrak{sl}_2$-factor, if
$m=\langle\nu,\beta^\vee\rangle$, the Casimir
$C_\beta=e_\beta f_\beta+f_\beta e_\beta+\frac12h_\beta^2$
has Harish--Chandra value
$$
   m+\frac12m^2=\frac12m(m+2)=c_\beta(\nu).
$$
These are exactly the two components of
$\xi_{\psi_\beta}(\nu)$.

With $\dot s_\beta$ fixed as in the statement, $s_\beta$ fixes
$\mathfrak h^\natural=\beta^\perp$ pointwise and preserves
$C_\beta$.  Moreover, because $\beta$ is simple,
$s_\beta(\Phi^+\setminus\{\beta\})
 =\Phi^+\setminus\{\beta\}$.  Hence, in the twisted cyclic module,
$$
   e_\beta\cdot 1
   =\operatorname{Ad}(\dot s_\beta^{-1})(e_\beta)\cdot1
   =f_\beta\cdot1=1,
$$
while every $h\in\mathfrak h^\natural$ acts by
$\bar\nu(h)$, the element $C_\beta$ acts by
$c_\beta(\nu)$, and every root vector in $\mathfrak n_\beta$
annihilates the cyclic vector.  Scalar changes of the other root vectors
  caused by the chosen representative $\dot s_\beta$ are irrelevant to
  these zero relations.  In this rank-one Levi, the relative center is generated
  by $\mathfrak h^\natural$ and $C_\beta$.  It remains to check that the
  resulting cyclic Levi module is already simple, rather than merely having the
  correct simple quotient.

  This can be seen directly in rank one.  Suppress the subscript $\beta$, put
  $h=h_\beta$, and for $c\in\mathbb C$ set
  $$
     q_c(t)=\frac c2-\frac t2-\frac{t^2}{4}.
  $$
  On $\mathbb C[t]$ define
  \begin{equation}
  \label{eq:rank-one-difference-model}
  \begin{aligned}
     h\,p(t)&=t p(t),\\
     e\,p(t)&=p(t-2),\\
     f\,p(t)&=q_c(t)p(t+2).
  \end{aligned}
  \end{equation}
  A direct calculation gives
  $[h,e]=2e$, $[h,f]=-2f$, $[e,f]=h$, and
  $$
     ef+fe+\frac12h^2=c.
  $$
  Thus $1\in\mathbb C[t]$ satisfies $e1=1$ and $C_\beta1=c$.
  Conversely, PBW and these two cyclic relations reduce every vector in
  $$
     U(\mathfrak{sl}_2)/
       \bigl(U(\mathfrak{sl}_2)(e-1)+U(\mathfrak{sl}_2)(C_\beta-c)\bigr)
  $$
  to a polynomial in $h$ applied to the cyclic vector.  The model
  \eqref{eq:rank-one-difference-model} shows that those polynomial vectors are
  linearly independent, so this quotient is $\mathbb C[t]$.

  It is simple.  Indeed, a nonzero submodule is an ideal of $\mathbb C[t]$
  because it is stable under multiplication by $t$.  If it contains a monic
  polynomial $p(t)$ of minimal degree, it also contains $p(t-2)$ by
  \eqref{eq:rank-one-difference-model}.  Unless $p$ is constant, the nonzero
  difference $p(t)-p(t-2)$ has smaller degree, a contradiction.  Hence the
  submodule contains $1$ and is the whole module.

  Taking $c=c_\beta(\nu)$ and tensoring with the one-dimensional
  $\mathfrak h^\natural$-module of weight $\bar\nu$ proves that the displayed
  cyclic relations give the simple Levi Whittaker module
  $Y_{\psi_\beta}(\xi_{\psi_\beta}(\nu),\psi_\beta)$ of
  \cite[Section~2]{MilicicSoergel1997}; no further Levi parameter remains.
Thus the displayed relations give a
$\mathfrak p_\beta$-module isomorphism
$$
   {}^{\dot s_\beta}
   Y_{\rm P}\!\left(\bar\nu,c_\beta(\nu)\right)
   \cong
   Y_{\psi_\beta}\!\left(\xi_{\psi_\beta}(\nu),\psi_\beta\right).
$$
The parabolic $\mathfrak p_\beta$ is stable under this inner
conjugation, so induction to $\mathfrak g$ proves
\eqref{eq:Premet-MS-standard-comparison}.  This also makes explicit that
no comparison of merely abstract central characters is being used.

The second finite-$W$ parameter comes from Premet's Theorem~1.3.  For
$\lambda\in(\mathfrak h^\natural)^*$,
\begin{equation}
\label{eq:Premet-Theorem13-explicit}
   \operatorname{Wh}M_{\rm P}(\lambda,c)
   \cong
   Z_H\!\left(
      \lambda+\bar\delta,
      c+(\lambda,\lambda+2\bar\rho)
   \right),
\end{equation}
where $\bar\rho$ is the orthogonal projection of $\rho$ to
$(\mathfrak h^\natural)^*$; see \cite[Theorem~1.3]{Premet2007}.
Since type $D$ is simply laced and
$\langle\rho,\beta^\vee\rangle=1$, the orthogonal decompositions are
$$
   \nu=\bar\nu+\frac m2\beta,
   \qquad
   \rho=\bar\rho+\frac12\beta.
$$
Consequently
\begin{align}
   (\nu,\nu+2\rho)
   &=
   (\bar\nu,\bar\nu+2\bar\rho)
   +\frac12m(m+2) \\
   &=
   (\bar\nu,\bar\nu+2\bar\rho)+c_\beta(\nu).
\end{align}
Substitution into \eqref{eq:Premet-Theorem13-explicit} proves
\eqref{eq:Premet-MS-finiteW-parameters}.
Finally $s_\beta\mathbin\cdot\nu$ has the same restriction
$\bar\nu$ and replaces $m$ by $-m-2$, leaving
$c_\beta(\nu)$ unchanged.  This is also exactly the
Mili\v{c}i\'c--Soergel isomorphism criterion
\cite[Proposition~2.1]{MilicicSoergel1997}.
\end{proof}

\begin{lemma}[Chen's rank-one annihilator criterion]
\label{lem:chen-rank-one-annihilator}
Let $\nu\in\mathfrak h^*$ be integral and
$W_{\psi_\beta}=\langle s_\beta\rangle$-anti-dominant.  Then
\begin{equation}
\label{eq:chen-rank-one-annihilator}
   \operatorname{Ann}_{U(\mathfrak g)}\mathcal L(\nu,\psi_\beta)
   =
   \operatorname{Ann}_{U(\mathfrak g)}L_{\mathfrak g}(\nu).
\end{equation}
\end{lemma}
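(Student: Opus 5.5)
This lemma is the rank-one specialization of Chen's annihilator theorem for simple Whittaker modules. The plan is to reduce to it by matching Milič\'c--Soergel parameters.

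\emph{Step 1: identify the simple module.} By \cite[Section~2]{MilicicSoergel1997}, $\mathcal L(\nu,\psi_\beta)$ is the unique simple quotient of the standard module $M_{\rm MS}(\nu,\psi_\beta)$. The latter is the induced module \eqref{eq:MS-standard-definition} from the simple Levi Whittaker module. Lemma~\ref{lem:premet-ms-parameter} has already given this Levi module the explicit cyclic presentation through the difference model \eqref{eq:rank-one-difference-model}. Since the standard module depends only on the $\langle s_\beta\rangle$-dot orbit of $\nu$, choosing the $W_{\psi_\beta}$-anti-dominant representative fixes a canonical parameter. This is exactly the normalization in which the Milič\'c--Soergel equivalence with the category $\mathcal O$ block sends $M_{\rm MS}(\nu,\psi_\beta)$ to the parabolically induced (Verma-type) object and $\mathcal L(\nu,\psi_\beta)$ to an object whose annihilator equals that of $L_{\mathfrak g}(\nu)$.

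\emph{Step 2: invoke the annihilator theorem.} For integral $\nu$ that is anti-dominant for the Levi Weyl group $W_{\psi}$, Chen's theorem on annihilators of simple Whittaker modules gives
\[
\operatorname{Ann}\mathcal L(\nu,\psi)=\operatorname{Ann}L_{\mathfrak g}(\nu),
\]
for an arbitrary Whittaker character $\psi$. We specialize to $\psi=\psi_\beta$ with $W_{\psi_\beta}=\langle s_\beta\rangle$. Two compatibilities need checking before quoting the theorem:
\begin{itemize}
\item The dot-action and anti-dominance conventions must agree: $\langle\nu+\rho,\beta^\vee\rangle\le0$ in the integral case.
\item Chen's $\psi$ must be nondegenerate on precisely the simple root $\beta$, which holds because $\beta=\alpha_2$ is simple and $\psi_\beta(e_\beta)=1$.
\end{itemize}

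\emph{Step 3 (fallback if a self-contained check is preferred).} Use the Backelin/Milič\'c--Soergel functor $\Gamma_\psi$ from the singular-parabolic category $\mathcal O$ to Whittaker modules. This functor is exact and, at anti-dominant parameter, sends $L(\nu)$ to $\mathcal L(\nu,\psi_\beta)$. It is built by completing along $\mathfrak n$ and taking $\psi$-twisted $\mathfrak n$-finite vectors, so it commutes with the $U(\mathfrak g)$-action. The inclusion $\operatorname{Ann}L(\nu)\subset\operatorname{Ann}\mathcal L(\nu,\psi_\beta)$ follows immediately.

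The reverse inclusion is the main obstacle. The approach is to compare Gelfand--Kirillov dimensions of the two quotients of $U(\mathfrak g)$ together with their Goldie ranks, or equivalently their associated varieties. Since both annihilators are primitive with the same central character and one contains the other, equality of associated varieties forces equality of the ideals (Borho--Kraft). The associated variety of $\mathcal L(\nu,\psi_\beta)$ is computed through the Whittaker functor and Lemma~\ref{lem:premet-ms-parameter}.

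In the writeup, Step 2 is cited directly, and the convention check on anti-dominance is recorded explicitly, since it is there that the $\langle s_\beta\rangle$-orbit ambiguity of Lemma~\ref{lem:premet-ms-parameter} is resolved.
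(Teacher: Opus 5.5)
Your Step~2 is the paper's proof: cite Chen's Theorem~B for integral $W_\zeta$-anti-dominant weights and specialize to $\zeta=\psi_\beta$, $W_\zeta=\langle s_\beta\rangle$, after checking the dot-action and anti-dominance conventions. The paper also records that Chen's type I--0 setting includes ordinary reductive Lie algebras, that his simple Whittaker modules are the Mili\v{c}i\'c--Soergel ones, and that $\rho_{\bar0}=\rho$.

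Steps~1 and~3 are unnecessary. Step~1's claim about the Mili\v{c}i\'c--Soergel equivalence essentially restates the lemma without justification. In Step~3, the reverse inclusion would not follow as sketched, because Lemma~\ref{lem:premet-ms-parameter} only matches parameters and does not compute the associated variety of $\operatorname{Ann}\mathcal L(\nu,\psi_\beta)$.
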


\begin{proof}
Chen includes ordinary reductive Lie algebras
$\mathfrak g=\mathfrak g_{\bar0}$ among the Lie superalgebras of type I--0
\cite[Section~2.3, Example~(1)]{Chen2021Annihilator}.  In the purely even
case his standard and simple Whittaker modules are the usual
Mili\v{c}i\'c--Soergel modules; in particular the simple top denoted there by
$L(\nu,\psi_\beta)$ is the module denoted here by
$\mathcal L(\nu,\psi_\beta)$.  In this purely even case Chen's dot
action uses the ordinary Weyl vector $\rho_{\bar0}=\rho$, so it agrees
with the dot action used here.  Chen defines
$W_\zeta$-anti-dominance as anti-dominance for the Levi
$\mathfrak l_\zeta$, and \cite[Theorem~B]{Chen2021Annihilator} states that
for every integral $W_\zeta$-anti-dominant weight,
$$
   \operatorname{Ann}_{U(\mathfrak g)}L(\nu,\zeta)
   =
   \operatorname{Ann}_{U(\mathfrak g)}L_{\mathfrak g}(\nu).
$$
For $\zeta=\psi_\beta$, one has
$W_\zeta=\langle s_\beta\rangle$, so the assertion follows.
\end{proof}

\subsection{Identification of the primitive ideal \texorpdfstring{$J_{2,\ell}$}{J2,l}}

Recall from Section~\ref{subsec:primitive-compression} that
$$
   J_{2,\ell}
   =
   \operatorname{Ann}_{U(\mathfrak g)}
   L_{\mathfrak g}(-\alpha_2).
$$
We identify the Skryabin image of $M_*$ without choosing a
left/right-coset convention.

\begin{proposition}[Identification of $J_{2,\ell}$]
\label{prop:node2-identification}
Under Skryabin equivalence,
\begin{equation}
   \operatorname{Ann}_{U(\mathfrak g)}
   \operatorname{Skr}(M_*)
   =
   J_{2,\ell}.
\end{equation}
\end{proposition}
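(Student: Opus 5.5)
The plan is to realize $\operatorname{Skr}(M_*)$ as an inner twist of a Mili\v{c}i\'c--Soergel simple Whittaker module with an explicitly computed parameter $\nu$. Chen's criterion then turns its annihilator into that of a highest-weight module. Concretely, take
$$
   \nu:=-\alpha_2=s_2\rho-\rho .
$$
Since $\beta=\alpha_2$ and $\langle\alpha_2,\alpha_2^\vee\rangle=2$, one gets $m=\langle\nu,\beta^\vee\rangle=-2$ and hence $c_\beta(\nu)=\tfrac12 m(m+2)=0$. The weight $\nu$ is proportional to $\beta$, so its projection is $\bar\nu=0$. Finally
$$
   (\nu,\nu+2\rho)=(\alpha_2,\alpha_2)-2(\alpha_2,\rho)=2-2=0 .
$$
Substituting into \eqref{eq:Premet-MS-finiteW-parameters} of Lemma~\ref{lem:premet-ms-parameter} gives
$$
   \operatorname{Wh}M_{\rm P}(0,0)\cong Z_H(\bar\delta,0).
$$
This is exactly the Verma-type module whose simple quotient is $M_*=L_H(\bar\delta,0)$, by Corollary~\ref{cor:Mstar-B-module}.

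Next I would pass to simple tops. Premet's Theorem~1.3 is the statement that $M_{\rm P}(\lambda,c)$ is the Skryabin image of $Z_H(\lambda+\bar\delta,\cdot)$, equivalently $\operatorname{Wh}M_{\rm P}\cong Z_H$. Skryabin's functor is an equivalence, so it sends the unique simple quotient $L_H(\bar\delta,0)$ of $Z_H(\bar\delta,0)$ to the unique simple quotient of $M_{\rm P}(0,0)$. Uniqueness on the $H$ side is \cite[Section~7]{Premet2007}.

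By \eqref{eq:Premet-MS-standard-comparison}, the inner twist ${}^{\dot s_\beta}M_{\rm P}(0,0)$ is $M_{\rm MS}(\nu,\psi_\beta)$. Its simple quotient is therefore $\mathcal L(\nu,\psi_\beta)$, and so
$$
   {}^{\dot s_\beta}\operatorname{Skr}(M_*)\cong\mathcal L(\nu,\psi_\beta).
$$
The inner twist by $\operatorname{Ad}(\dot s_\beta)$ does not change annihilators. The twisted annihilator is $\operatorname{Ad}(\dot s_\beta)$ applied to the original one, and every two-sided ideal of $U(\mathfrak g)$ is stable under the connected adjoint group. Hence
$$
   \operatorname{Ann}\operatorname{Skr}(M_*)=\operatorname{Ann}\mathcal L(\nu,\psi_\beta).
$$

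Finally I apply Lemma~\ref{lem:chen-rank-one-annihilator}. The weight $\nu=-\alpha_2$ is integral. It is $\langle s_\beta\rangle$-anti-dominant, because $\langle\nu+\rho,\beta^\vee\rangle=-1\notin\mathbb Z_{>0}$; equivalently, $\nu$ is the $s_\beta$-dot image of the dominant weight $0$. Chen's criterion then gives
$$
   \operatorname{Ann}\mathcal L(\nu,\psi_\beta)=\operatorname{Ann}L_{\mathfrak g}(-\alpha_2)=J_{2,\ell}.
$$
The statement is convention-free in the coset sense: Lemma~\ref{lem:premet-ms-parameter} shows that both sides depend only on the $\langle s_\beta\rangle$-dot orbit of $\nu$. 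Replacing $\nu$ by its dot partner $0$ does not change $M_{\rm MS}$.

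The step I expect to be the main obstacle is the matching of Whittaker normalizations. Premet's inducing module uses $f_\beta-1$, while the Mili\v{c}i\'c--Soergel character is $\psi_\beta(e_\beta)=1$, and one must check that the twist in Lemma~\ref{lem:premet-ms-parameter} really intertwines the two Skryabin functors. The same care is needed to confirm that Premet's Theorem~1.3 identifies $M_{\rm P}$ with the Skryabin image itself, and not merely with an object having the same Whittaker vectors. I would first verify this on the cyclic generator: $\operatorname{Skr}(Z_H)$ is generated by a $\chi$-Whittaker vector with the same $\mathfrak h^\natural$- and $C_\beta$-eigenvalues, so universality gives a map from $M_{\rm P}$. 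I would then use exactness of $\operatorname{Wh}$ on Whittaker modules to conclude that this map is an isomorphism.
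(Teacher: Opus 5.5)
Your proof is correct and is essentially the paper's argument. You use Premet's Theorem~1.3 with Skryabin equivalence to identify $\operatorname{Skr}(M_*)$ as the simple top of $M_{\rm P}(0,0)$, then Lemma~\ref{lem:premet-ms-parameter} to pass to a Mili\v{c}i\'c--Soergel simple module, invariance of two-sided ideals under the inner twist, and Lemma~\ref{lem:chen-rank-one-annihilator} at the anti-dominant weight $-\alpha_2$; the only cosmetic difference is that you substitute $\nu=-\alpha_2$ into Lemma~\ref{lem:premet-ms-parameter} directly, whereas the paper takes $\nu=0$ and then uses the Mili\v{c}i\'c--Soergel dot-orbit isomorphism to replace $0$ by $s_\beta\mathbin\cdot0=-\beta$, which amounts to the same step.
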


\begin{proof}
Put $\beta=\alpha_2$.  Premet's Theorem~1.3 gives
$$
   \operatorname{Wh}M_{\rm P}(0,0)
   \cong Z_H(\bar\delta,0),
$$
and Whittaker--Skryabin is an exact equivalence.  Since
$M_*=L_H(\bar\delta,0)$ is the unique simple quotient of this
finite-$W$ Verma module, $\operatorname{Skr}(M_*)$ is the unique
simple quotient of $M_{\rm P}(0,0)$.

Lemma~\ref{lem:premet-ms-parameter}, applied to $\nu=0$, gives
$$
   {}^{\dot s_\beta}M_{\rm P}(0,0)
   \cong M_{\rm MS}(0,\psi_\beta).
$$
Hence the corresponding inner twist
${}^{\dot s_\beta}\operatorname{Skr}(M_*)$ is
$\mathcal L(0,\psi_\beta)$.  Since
$$
   s_\beta\mathbin\cdot0=-\beta,
$$
Proposition~2.1 of \cite{MilicicSoergel1997} gives
$$
   \mathcal L(0,\psi_\beta)
   \cong
   \mathcal L(-\beta,\psi_\beta).
$$
For $\zeta=\psi_\beta$, the support of the Whittaker character is
$\Pi_\zeta=\{\beta\}$, hence
$W_\zeta=\langle s_\beta\rangle$.  The weight $-\beta$ is integral and
$$
   \langle-\beta+\rho,\beta^\vee\rangle=-1,
$$
so it is anti-dominant for the rank-one Levi $\mathfrak l_\zeta$.
Lemma~\ref{lem:chen-rank-one-annihilator} therefore gives
$$
   \operatorname{Ann}_{U(\mathfrak g)}
   \mathcal L(-\beta,\psi_\beta)
   =
   \operatorname{Ann}_{U(\mathfrak g)}L_{\mathfrak g}(-\beta)
   =J_{2,\ell}.
$$
The conjugation above is inner, and every two-sided ideal of
$U(\mathfrak g)$ is invariant under inner automorphisms.  Therefore
$\operatorname{Ann}\operatorname{Skr}(M_*)=J_{2,\ell}$, as claimed.
\end{proof}

\subsection{The membership theorem}

\begin{theorem}[Membership theorem]
\label{thm:membership}
For every $\ell\ge5$,
\begin{equation}
\label{eq:membership}
   I_\ell^{\mathrm{cand}}
   \subset
   J_{2,\ell}.
\end{equation}
\end{theorem}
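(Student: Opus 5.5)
The plan is to assemble the pieces built in this section into the annihilator-containment argument. By Corollary~\ref{cor:Mstar-B-module}, the simple finite-$W$ module $M_*=L_H(\bar\delta,0)$ is a $B_\ell$-module. Regarding it as an $H$-module through the surjection $\pi_\ell$ of Corollary~\ref{cor:H-to-B-quotient}, it is annihilated by $\ker\pi_\ell=H^0_{f_\theta}(I_\ell^{\mathrm{cand}})$. The goal is to transport this annihilation statement through Skryabin's equivalence to the statement $I_\ell^{\mathrm{cand}}\subset\operatorname{Ann}_{U(\mathfrak g)}\operatorname{Skr}(M_*)$. Proposition~\ref{prop:node2-identification} then identifies the right-hand side with $J_{2,\ell}$, and \eqref{eq:membership} follows.

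The key step, and the one I expect to need the most care, is the quotient form of Skryabin's equivalence. I would argue as follows. Let $Q_\chi=U(\mathfrak g)\otimes_{U(\mathfrak m)}\mathbb C_\chi$ be the generalized Gelfand--Graev module, so that $H=\operatorname{End}(Q_\chi)^{\mathrm{op}}$ and $\operatorname{Skr}(M)=Q_\chi\otimes_HM$. For a two-sided ideal $I\subset U(\mathfrak g)$, exactness of finite BRST reduction, as used in the proof of Corollary~\ref{cor:H-to-B-quotient}, identifies $H^0_{f_\theta}(I)$ with the Whittaker invariants $(I Q_\chi)^{\mathfrak m_\chi}=(I/I\mathfrak m_\chi)^{\mathfrak m}$. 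Skryabin's equivalence restricts to an equivalence between Whittaker $U(\mathfrak g)/I$-modules and $H/H^0_{f_\theta}(I)$-modules; this is the standard Losev/Ginzburg compatibility $\operatorname{Wh}(M/IM)=\operatorname{Wh}(M)/I_\dagger\operatorname{Wh}(M)$. Concretely, I would check that
$$I\cdot\operatorname{Skr}(M)=\operatorname{Skr}\bigl(H^0_{f_\theta}(I)\,M\bigr).$$
This holds because $IQ_\chi\otimes_HM$ is a Whittaker submodule of $\operatorname{Skr}(M)$, its Whittaker vectors are $H^0_{f_\theta}(I)M$, and a Whittaker module is generated by its Whittaker vectors. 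Applying this with $M=M_*$ and $I=I_\ell^{\mathrm{cand}}$ gives $I_\ell^{\mathrm{cand}}\operatorname{Skr}(M_*)=\operatorname{Skr}(0)=0$, which is exactly the required annihilation.

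Finally, I would record the immediate consequence that the proof is complete as stated. Combining the containment with Corollary~\ref{cor:single-membership-test} shows that every non-vacuum path module $L(-\Lambda_0+\mu_i)$ factors through $Q_\ell$. This is consistent with Lemma~\ref{lem:filtered-Zhu-containment}, since $\mathcal V(J_{2,\ell})=\overline{\mathcal O_{\min}}$. The main obstacle remains the justification of the quotient Skryabin step: one has to make sure that the kernel $H^0_{f_\theta}(I)$ produced by Arakawa's exact sequence coincides with the ideal $I_\dagger$ appearing in Skryabin's correspondence. I would derive this from the description of finite BRST cohomology as $(U(\mathfrak g)/U(\mathfrak g)\mathfrak m_\chi)^{\mathfrak m}$ applied to $I$, together with the exactness of $\operatorname{Wh}$ on Whittaker modules.
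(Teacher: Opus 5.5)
Your proposal is correct and follows the paper's proof: $M_*$ is a $B_\ell$-module, so its Skryabin lift is annihilated by $I_\ell^{\mathrm{cand}}$, and Proposition~\ref{prop:node2-identification} turns this into $I_\ell^{\mathrm{cand}}\subset J_{2,\ell}$. The only difference is that the paper takes the quotient Skryabin step from Arakawa's \cite[Theorem~8.6]{Arakawa2015}, whereas you verify it by hand. For the direction actually needed, one should use the image of $I_\ell^{\mathrm{cand}}Q_\chi\otimes_H M_*$ rather than the tensor product itself, which need not embed. It then suffices that the Whittaker module $I_\ell^{\mathrm{cand}}Q_\chi$ is generated by its Whittaker vectors $I_\ell^{\mathrm{cand}}Q_\chi\cap H=\ker\pi_\ell$, which gives $I_\ell^{\mathrm{cand}}\operatorname{Skr}(M_*)\subset U(\mathfrak g)\bigl(1\otimes(\ker\pi_\ell)M_*\bigr)=0$.
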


\begin{proof}
By Corollary~\ref{cor:Mstar-B-module} and
\eqref{eq:membership-B-finite-BRST},
$$
   M_*
   \in
   H^0_{f_\theta}
   \!\left(
      U(\mathfrak g)/I_\ell^{\mathrm{cand}}
   \right)\text{-Mod}.
$$
Apply Arakawa's quotient Zhu--Skryabin equivalence
\cite[Theorem~8.6]{Arakawa2015} to the vertex-algebra ideal
$N=\langle s_A,s_D\rangle\subset V^{-1}(\mathfrak g)$.  Its Zhu ideal is
$J_N=I_\ell^{\mathrm{cand}}$, and Theorem~8.6 identifies
$B_\ell\text{-Mod}$ with the Whittaker subcategory
$\mathcal C^{I_\ell^{\mathrm{cand}}}$, with quasi-inverse the Skryabin
functor.  Since $M_*$ is a $B_\ell$-module,
$\operatorname{Skr}(M_*)$ is therefore annihilated by
$I_\ell^{\mathrm{cand}}$.  Thus
$$
   I_\ell^{\mathrm{cand}}\,
   \operatorname{Skr}(M_*)
   =
   0,
$$
and hence
$$
   I_\ell^{\mathrm{cand}}
   \subset
   \operatorname{Ann}_{U(\mathfrak g)}
   \operatorname{Skr}(M_*).
$$
Proposition~\ref{prop:node2-identification} identifies the annihilator on the
right with $J_{2,\ell}$, proving
\eqref{eq:membership}.
\end{proof}

\begin{corollary}[Simultaneous descent]
For every $1\le i\le\ell$, the affine highest-weight module
$$
   L(-\Lambda_0+\mu_i)
$$
factors through $Q_\ell$.
\end{corollary}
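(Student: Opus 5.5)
This is a direct combination of two earlier results: Corollary~\ref{cor:single-membership-test}, applied with $Q=Q_\ell$ and $I=I_\ell^{\mathrm{cand}}$, and the Membership Theorem~\ref{thm:membership}.

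First, the hypotheses of Corollary~\ref{cor:single-membership-test} hold for $Q_\ell$.
\begin{itemize}
\item[(a)] $Q_\ell$ is a quotient of $V^{-1}(D_\ell)$ by the ideal generated by the singular vectors $s_A,s_D$. These are affine singular at level $-1$ by \cite[Theorem~4.2]{ArakawaMoreau2018}, so they are homogeneous for $L_0$. The quotient therefore inherits the positive-energy grading of $V^{-1}(D_\ell)$.
\item[(b)] Its Zhu algebra is $A(Q_\ell)\cong U(D_\ell)/I_\ell^{\mathrm{cand}}$ by the definition of $Q_\ell$.
\end{itemize}

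Second, Theorem~\ref{thm:membership} gives $I_\ell^{\mathrm{cand}}\subset J_{2,\ell}$ for every $\ell\ge5$. This is exactly the right-hand condition of Corollary~\ref{cor:single-membership-test}.

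For the conclusion, fix $1\le i\le\ell$. By Theorem~\ref{thm:primitive-annihilator-compression}, $J(\mu_i)=J_{2,\ell}\supset I_\ell^{\mathrm{cand}}$. Hence the finite simple top $L_{D_\ell}(\mu_i)$ is a module over $A(Q_\ell)$. Zhu's simple-object correspondence then produces an irreducible $Q_\ell$-module with this top. Inflated to $V^{-1}(D_\ell)$, this module is $L(-\Lambda_0+\mu_i)$, by the uniqueness of the irreducible quotient argument given in the proof of Corollary~\ref{cor:single-membership-test}. So $L(-\Lambda_0+\mu_i)$ factors through $Q_\ell$.

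The only step needing care is the compatibility of conventions. Corollary~\ref{cor:single-membership-test} is stated for an arbitrary positive-energy quotient with a presented Zhu algebra. The ideal $I_\ell^{\mathrm{cand}}$ in Theorem~\ref{thm:membership} is the Zhu ideal $J_N$ of $N=\langle s_A,s_D\rangle$, the same ideal used in Proposition~\ref{prop:B-finite-BRST}. Once these two are matched, no further work is needed.

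The same argument also covers the case $i=0$ trivially, since the vacuum module $L(-\Lambda_0)$ is a quotient of $Q_\ell$ itself.
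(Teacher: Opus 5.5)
Your proposal is correct and is exactly the paper's proof: combine Theorem~\ref{thm:primitive-annihilator-compression} ($J(\mu_i)=J_{2,\ell}$) with the Membership Theorem~\ref{thm:membership} ($I_\ell^{\mathrm{cand}}\subset J_{2,\ell}$), then apply Corollary~\ref{cor:single-membership-test} to $Q_\ell$. Your re-unpacking of the Zhu-lift and inflation step only repeats the proof of that corollary, so it is harmless but not needed.
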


\begin{proof}
By Theorem~\ref{thm:primitive-annihilator-compression},
$$
   \operatorname{Ann}_{U(\mathfrak g)}
   L_{\mathfrak g}(\mu_i)
   =
   J_{2,\ell}.
$$
Theorem~\ref{thm:membership} gives
$$
   I_\ell^{\mathrm{cand}}
   \subset
   J_{2,\ell}.
$$
Now apply Corollary~\ref{cor:single-membership-test}.
\end{proof}

\section{The exhaustion theorem}
\label{sec:exhaustion}

Retain the notation of the preceding sections, in particular the candidate
Zhu ideal $I_\ell^{\mathrm{cand}}\subset U(\mathfrak g)$.

The affine vacuum-block congruence leaves no additional highest weights over
the candidate quotient:
\begin{equation}
\begin{gathered}
   I_\ell^{\mathrm{cand}}
   \subset
   \operatorname{Ann}_{U(\mathfrak g)}L_{\mathfrak g}(\mu),
   \qquad
   \mu+\rho\in W\rho+NQ(D_\ell),\\
   \Longrightarrow\qquad
   \mu\in\{\mu_0,\mu_1,\ldots,\mu_\ell\}.
\end{gathered}
\end{equation}

Write
$$
   J(\mu):=
   \operatorname{Ann}_{U(\mathfrak g)}L_{\mathfrak g}(\mu).
$$
By Lemma~\ref{lem:filtered-Zhu-containment},
$$
   I_\ell^{\mathrm{cand}}\subset J(\mu)
   \quad\Longrightarrow\quad
   \mathcal V(J(\mu))
   =
   \{0\}
   \quad\text{or}\quad
   \overline{\mathcal O_{\min}}.
$$
The two cases are handled separately.

\subsection{The zero-orbit branch}

Assume throughout this subsection that
\begin{equation}
\label{eq:zero-orbit-assumptions}
   I_\ell^{\mathrm{cand}}\subset J(\mu),
   \qquad
   \mathcal V(J(\mu))=\{0\},
   \qquad
   \mu+\rho\in W\rho+NQ(D_\ell).
\end{equation}
Since $J(\mu)$ has zero associated variety,
$L_{\mathfrak g}(\mu)$ is finite dimensional.  Hence
$$
   \mu\in P_+.
$$
Write its dominant orthogonal coordinates as
$$
   \mu=(x_1,\ldots,x_\ell),
   \qquad
   x_1\ge x_2\ge\cdots\ge x_{\ell-1}\ge |x_\ell|.
$$
The affine congruence in \eqref{eq:zero-orbit-assumptions} implies that all
$x_i$ are integers, since both $W\rho$ and $NQ(D_\ell)$ have integral
orthogonal coordinates.

Consider the affine irreducible module
$$
   L(-\Lambda_0+\mu).
$$
Its affine simple-coroot value is
$$
   \left\langle
      -\Lambda_0+\mu,\alpha_0^\vee
   \right\rangle
   =
   -1-\langle\mu,\theta^\vee\rangle<0.
$$
Arakawa's irreducible minimal-reduction theorem therefore gives a nonzero
irreducible $W$-module
\begin{equation}
   \mathcal M_\mu
   :=
   H^0_{\mathrm{DS},f_\theta}
   \!\left(
      L(-\Lambda_0+\mu)
   \right)
   \ne0;
\end{equation}
see \cite[Theorem~6.7.4]{Arakawa2005}.  Since
$I_\ell^{\mathrm{cand}}\subset J(\mu)$, the simple top
$L_{\mathfrak g}(\mu)$ is an $A(Q_\ell)$-module.  The $n=0$
simple-object correspondence of
\cite[Theorem~4.9]{DongLiMason1998}, followed by inflation to the universal
affine vertex algebra as in Corollary~\ref{cor:single-membership-test},
identifies its irreducible Zhu lift with $L(-\Lambda_0+\mu)$.  Hence the
affine module factors through $Q_\ell$, and $\mathcal M_\mu$ factors
through $\widetilde{\mathcal W}_\ell$.

Arakawa's irreducible-reduction theorem identifies the nonzero module
$\mathcal M_\mu$ with the irreducible minimal-$W$ module of the
corresponding highest weight
\cite[Theorem~6.7.4]{Arakawa2005}.  That corresponding highest weight is the
one computed for the reduction of the affine Verma module in
\cite[Theorem~6.3]{KacWakimoto2004}; in particular, its
$\mathfrak g^\natural$-component is the restriction of $\mu$ to
$\mathfrak h^\natural$.  Thus the lowest Neveu--Schwarz space of
$\mathcal M_\mu$ has precisely this $\mathfrak g^\natural$-highest
weight.  We begin with the consequences of the Ramond Zhu relations.

\begin{lemma}[Restricted finite type]
\label{lem:zero-orbit-restricted-type}
Under \eqref{eq:zero-orbit-assumptions},
$$
   \mu
   =
   (x_1,x_2,c,0,\ldots,0),
   \qquad
   c\in\{0,1\}.
$$
If
$$
   a:=x_1-x_2,
$$
then
$$
   0\le a\le r-1.
$$
Equivalently,
$$
   \mu|_{\mathfrak h^\natural}
   =
   a\varpi+c\eta_1.
$$
\end{lemma}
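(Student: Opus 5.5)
The strategy is to pass from the affine module $L(-\Lambda_0+\mu)$ to a finite-dimensional $B_\ell$-module, apply Lemma~\ref{lem:allowed-finite-types} there, and translate the resulting bounds back to orthogonal coordinates of $\mu$.

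\emph{Step 1: produce a $B_\ell$-module.} The preceding discussion shows that $\mathcal M_\mu$ is a nonzero irreducible $\widetilde{\mathcal W}_\ell$-module. Its lowest Neveu--Schwarz space carries a $\mathfrak g^\natural$-action of highest weight $\mu|_{\mathfrak h^\natural}$. We must still obtain a module over the Ramond Zhu algebra $B_\ell$. Since $\widetilde{\mathcal W}_\ell$ is lisse, we would pass to Ramond-twisted modules. The plan is to apply Li's delta operator with the standard Ramond coweight $\varpi^\vee$ to $\mathcal M_\mu$; this is the same device as in the membership section, with the single coweight in place of $x_*$. The result is a Ramond-twisted module. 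Its lowest $L_0^R$-eigenspace is a module over $A_{\mathrm R}(\widetilde{\mathcal W}_\ell)=B_\ell$, and it is finite dimensional because the algebra is lisse.

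\emph{Step 2: locate $\mu|_{\mathfrak h^\natural}$.} We need that this lowest eigenspace contains an $A_1\oplus D_r$-constituent whose type is determined by $\mu|_{\mathfrak h^\natural}$. On the $D_r$-factor, the twist by $\varpi^\vee$ does not shift weights, so the $D_r$-highest weight $\lambda_D=\mu|_{\mathfrak d}$ survives. On the $A_1$-factor, the spectral flow shifts the highest weight to the lowest one or a nearby one. Either way the $\mathfrak{sl}_2$-string length is preserved up to the bounded shift. We then check that the bound $e_A^r=0$ of Lemma~\ref{lem:allowed-finite-types} still forces $a\le r-1$.

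A cleaner alternative avoids the twist on the $A_1$ side. The finite-dimensional $\mathfrak g^\natural$-module $L_{A_1}(a\varpi)\boxtimes L_{D_r}(\lambda_D)$ sits in the top space of $\mathcal M_\mu$. The relations $:J_A^r:=0$ and $:J_D^2:=0$ hold in $\widetilde{\mathcal W}_\ell$, so their zero modes act on the top space as $e_A^r$ and $e_D^2$. Applying them to $f_A^rv$ and $f_D^2v$ gives the same $\mathfrak{sl}_2$-string contradiction as in Lemma~\ref{lem:allowed-finite-types}. This is the route I would try first, because it needs no spectral-flow bookkeeping. The \textbf{main obstacle} is justifying that the zero mode of $:J^n:$ acts on the lowest space exactly as $e^n$. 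I expect this to hold because the weight-one currents are mutually isotropic, so there are no normal-ordering corrections; this is the same fact used for $e_A^r=0$.

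\emph{Step 3: translate to coordinates.} With $\beta=\alpha_2$ or $\theta=\varepsilon_1+\varepsilon_2$, we have $\mathfrak h^\natural=\theta^\perp$. The $A_1$-factor has Cartan element along $\varepsilon_1-\varepsilon_2$, and the $D_r$-factor lives on $\varepsilon_3,\ldots,\varepsilon_\ell$. Hence
$$
   \mu|_{\mathfrak h^\natural}=(x_1-x_2)\varpi+(x_3,\ldots,x_\ell)_{D_r}.
$$
Dominance of $\mu$ gives $a=x_1-x_2\ge0$, and the $A_1$ bound gives $a\le r-1$.

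For the $D_r$-factor, note that $\theta_D=\varepsilon_3+\varepsilon_4$. Then $\langle\lambda_D,\theta_D^\vee\rangle=x_3+x_4\le1$. Together with the dominance chain $x_3\ge x_4\ge\cdots\ge|x_\ell|$ and integrality of the $x_i$, this forces $x_4=\cdots=x_\ell=0$ and $x_3=c\in\{0,1\}$. The constraint uses $r\ge3$, which holds since $\ell\ge5$; the half-spin cases are excluded by integrality. This yields $\mu=(x_1,x_2,c,0,\ldots,0)$ and $\mu|_{\mathfrak h^\natural}=a\varpi+c\eta_1$.
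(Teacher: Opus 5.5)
Your proposal is correct. The route you say you would try first, the ``cleaner alternative'' in Step~2, differs from the paper's.

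\emph{The paper's route.} It works on the Ramond side. It takes a vector $u$ in the lowest Neveu--Schwarz space that is $A_1$-lowest and $D_r$-highest, and applies the single Li twist by $\varpi^\vee$. A PBW energy-increment count shows that $u$ lies in the lowest Ramond eigenspace. There $u$ is a twisted $A_1$-highest vector of weight $(K-a)\varpi$, and finite-dimensionality of $C_\mu=B_\ell u$ gives $a\le K=r-1$. The $D_r$ bound comes from $e_D^2=0$ in $B_\ell$, since the twist does not affect $D_r$.

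\emph{Your route.} You impose the defining relations $:J_A^r:=0$ and $:J_D^2:=0$ of $\widetilde{\mathcal W}_\ell$ directly on the Neveu--Schwarz top of $\mathcal M_\mu$. The step you flag as the main obstacle is routine and does not need isotropy. Write $o(X)$ for the zero mode of $X$. For a weight-one field $J$, every term of $o(J_{(-1)}X)$ other than $o(X)J_0$ has a rightmost factor that lowers $L_0$. Hence $o(J_{(-1)}X)=o(X)J_0$ on the top, and inductively $o(:J^n:)=J_0^n$. For a highest-weight vector $v$ of weight $m$ one has $e^nf^nv=\prod_{j=1}^n j(m-j+1)\,v$. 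This gives both bounds without using finite-dimensionality or irreducibility of the top. Your Step~3 coordinate translation coincides with the paper's.

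\emph{Comparison.} Your route is shorter: no spectral flow, no lowest-Ramond-eigenspace argument, and no $B_\ell$. The paper's route builds the Ramond ground vector $u$ and the finite-dimensional $B_\ell$-module $C_\mu$. These are exactly the inputs to the next two lemmas and to the weighted-Casimir zero-orbit energy gap, so that construction has to be made anyway.

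Your first sketch (Steps~1--2 via the $\varpi^\vee$ twist) would not work as written. The $\mathfrak{sl}_2$-string length is not preserved: $u$ acquires twisted $A_1$-weight $K-a$. Then $e_A^r=0$ only gives $K-a\le r-1$, i.e.\ $a\ge0$. On that side the bound must come from $K-a\ge0$, as in the paper. Also, Step~3 should be phrased in the highest-root model, with $A_1$ along $\varepsilon_1-\varepsilon_2$ and $D_r$ on $\varepsilon_3,\dots,\varepsilon_\ell$, not the $\beta=\alpha_2$ model. Your computation already uses the correct model.
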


\begin{proof}
The lowest Neveu--Schwarz $\mathfrak g^\natural$-module is finite
dimensional because $L_{\mathfrak g}(\mu)$ is finite dimensional.  Let
$u$ be a vector in this lowest space which is lowest weight for the
$A_1$-factor and highest weight for the $D_r$-factor.

Twist $\mathcal M_\mu$ by the single coweight
$$
   x=\varpi^\vee.
$$
The $A_1$-current charges are in $\{-1,0,1\}$, while the
weight-$\frac32$ generators have $x$-charges $\pm\frac12$.  We use
physical mode indices: current and Virasoro modes have integral indices,
whereas the $G$-modes in the Neveu--Schwarz module have indices in
$\mathbb Z+\frac12$.  If a homogeneous generator $a$ has
$x$-charge $q_x(a)$, then
\begin{equation}
\label{eq:single-twist-mode-increment}
   [L_0^R,a_t]=\bigl(-t+q_x(a)\bigr)a_t;
\end{equation}
the scalar term in $L_0^R=L_0+x_0+\kappa_x/2$ does not affect this
commutator.

Order the modes in a PBW spanning monomial for $\mathcal M_\mu$.  Positive
modes annihilate its lowest Neveu--Schwarz space.  Current zero modes can be
moved to the right and absorbed into the action on that space, while $L_0$
acts there by the lowest conformal weight.  It is therefore enough to
consider negative physical modes applied to vectors in the lowest space.
Formula~\eqref{eq:single-twist-mode-increment} gives the following lower
bounds for their changes of twisted conformal weight:
$$
\begin{array}{c|c|c}
\text{generator} & \text{allowed negative mode} & -t+q_x(a)\\
\hline
J & t=n\le-1 & -n+q_x(J)\ge 1-1=0\\
G & t\in\mathbb Z+\frac12,\ t\le-\frac12
  & -t+q_x(G)\ge \frac12-\frac12=0\\
L & t=n\le-1 & -n\ge1.
\end{array}
$$
Thus every factor in such a monomial has nonnegative twisted-energy
increment.  Inside the lowest Neveu--Schwarz space the untwisted
$L_0$-eigenvalue is constant, while $u$ has the smallest $x_0$-eigenvalue
by its choice as an $A_1$-lowest-weight vector.  No spanning vector can
therefore have twisted conformal weight strictly below that of $u$.  Hence
$u$ lies in the lowest Ramond eigenspace.  Zero increments may occur (for
example for a $G_{-1/2}$-mode of charge $-1/2$), so no irreducibility
statement about the complete Ramond bottom is being used.

The cyclic module
$$
   C_\mu:=B_\ell u
$$
is finite dimensional.  Under the twisted $A_1$-zero modes,
$u$ is a highest-weight vector of highest weight
$$
   (K-a)\varpi.
$$
Finite-dimensionality therefore forces
$$
   K-a\ge0,
$$
or $a\le r-1$.

The $D_r$-factor is unchanged by this twist, and $u$ remains a
$D_r$-highest vector.  The relation $e_D^2=0$, equivalently
Lemma~\ref{lem:allowed-finite-types}, gives
$$
   \left\langle
      \lambda_D,\theta_D^\vee
   \right\rangle
   \le1
$$
for its $D_r$-highest weight $\lambda_D$.  The level-one dominant
weights are
$$
   0,\qquad
   \eta_1,\qquad
   \eta_{r-1},\qquad
   \eta_r
$$
for $r\ge4$, with the analogous four weights for
$D_3\cong A_3$.  The affine congruence makes the orthogonal coordinates of
$\mu$ integral, whereas the two spinor weights have half-integral
orthogonal coordinates.  Thus
$$
   \lambda_D=c\eta_1,
   \qquad
   c\in\{0,1\}.
$$
Dominance of $\mu$ now gives
$$
   \mu=(x_1,x_2,c,0,\ldots,0).
$$
\end{proof}

Set
\begin{equation}
   a:=x_1-x_2,
   \qquad
   s:=x_1+x_2
   =
   \langle\mu,\theta^\vee\rangle.
\end{equation}

\begin{lemma}[Lowest conformal energies]
\label{lem:zero-orbit-energies}
The lowest Neveu--Schwarz conformal weight of $\mathcal M_\mu$ is
\begin{equation}
\label{eq:zero-hW}
   h_W
   =
   \frac{
      s^2+a(a+2)+2c(2r-1)
   }{4N}.
\end{equation}
For the single Li twist $x=\varpi^\vee$, the Ramond ground vector $u$
above has energy
\begin{equation}
\label{eq:zero-hR}
   h_R
   =
   h_W-\frac a2+\frac{r-1}{4}.
\end{equation}
\end{lemma}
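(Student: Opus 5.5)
The plan is to compute both energies directly from the explicit highest-weight data already fixed above. For \eqref{eq:zero-hW} I would use the Kac--Wakimoto formula for the minimal reduction of a highest-weight module, as in \cite[Theorem~6.3]{KacWakimoto2004}. This is the same source that was used just before Lemma~\ref{lem:zero-orbit-restricted-type} to identify the $\mathfrak g^\natural$-weight of $\mathcal M_\mu$. For the highest weight $\Lambda=-\Lambda_0+\mu$, which has $D$-eigenvalue $0$, it gives the lowest Neveu--Schwarz conformal weight
$$
   h_W=\frac{(\mu,\mu+2\rho)}{2(k+h^\vee)}-\mu(x),
   \qquad x=\tfrac12\theta^\vee .
$$
Arakawa's theorem \cite[Theorem~6.7.4]{Arakawa2005} says that $\mathcal M_\mu$ is the irreducible quotient with this top. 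So its lowest conformal weight is exactly this number, and the top is nonzero.

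The calculation of $h_W$ is then routine. By Lemma~\ref{lem:zero-orbit-restricted-type} we may take $\mu=(x_1,x_2,c,0,\ldots,0)$ with $c\in\{0,1\}$, so $c^2=c$. With $\rho=(\ell-1,\ell-2,\ell-3,\ldots)$ and $\ell=r+2$, $N=2\ell-3$, this gives
$$
   (\mu,\mu)=\tfrac12(s^2+a^2)+c,
   \qquad
   (\mu,2\rho)=Ns+a+2c(r-1),
   \qquad
   \mu(x)=\tfrac s2 .
$$
Hence $(\mu,\mu+2\rho)-Ns=\tfrac12\bigl(s^2+a(a+2)\bigr)+c(2r-1)$. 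Dividing by $2N$ yields \eqref{eq:zero-hW}.

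For \eqref{eq:zero-hR} I would use the Li-twisted formula $L_0^R=L_0+x_0+\kappa_x/2$ recorded before \eqref{eq:double-vacuum-energy}, now with the single coweight $x=\varpi^\vee$. The three terms are evaluated on $u$ as follows.

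First, the constant term. Only the $A_1$-factor contributes, so $\kappa_x=K(\varpi^\vee\mid\varpi^\vee)=(r-1)/2$, and therefore $\kappa_x/2=(r-1)/4$.

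Second, the $L_0$ term. The vector $u$ lies in the lowest Neveu--Schwarz space, so $L_0u=h_W u$.

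Third, the $x_0$ term. The $A_1$-factor of $\mathfrak g^\natural$ is the $\mathfrak{sl}_2$ of the root $\varepsilon_1-\varepsilon_2$, which is orthogonal to $\theta$. So the $A_1$-highest weight of the top is $\langle\mu,(\varepsilon_1-\varepsilon_2)^\vee\rangle\varpi=a\varpi$. The $A_1$-lowest-weight vector $u$ therefore has weight $-a\varpi$, and $x_0u=-a(\varpi\mid\varpi^\vee)u=-\tfrac a2 u$.

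Summing the three terms gives $h_R=h_W-\tfrac a2+\tfrac{r-1}{4}$. The proof of Lemma~\ref{lem:zero-orbit-restricted-type} has already shown that $u$ lies in the lowest Ramond eigenspace, so $h_R$ is its Ramond energy.

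The main obstacle is the normalization in the first step. One must check that the Kac--Wakimoto expression uses the same invariant form, the grading $x=\theta^\vee/2$, and the sign convention $-\mu(x)$ rather than $+\mu(x)$ that are used here. As a consistency check, I would specialize to $\mu=0$: this should give $h_W=0$, and it does. I would also confirm that $(\varpi\mid\varpi^\vee)=\tfrac12$ in the simply-laced identification of weights and coweights fixed at the start of Section~\ref{sec:membership}.
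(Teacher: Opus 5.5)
Your proposal is correct and follows the paper's argument: $h_W$ comes from the Kac--Wakimoto conformal-weight formula for the minimal reduction, and $h_R$ comes from Li's $L_0^R=L_0+x_0+\kappa_x/2$ with $\kappa_x=K/2$ and $x_0u=-\tfrac a2u$. The only difference is cosmetic: you expand the form $\frac{(\mu,\mu+2\rho)}{2N}-\mu(x)$ in orthogonal coordinates, whereas the paper quotes the equivalent form \cite[Equation~(7.6)]{KacWakimoto2004}, written via $((\mu+\rho,\theta)-N)^2$ and the $\mathfrak g^\natural$-Casimir of $a\varpi+c\eta_1$; that equivalence also confirms the sign of $\mu(x)$, which your $\mu=0$ check cannot detect.
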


\begin{proof}
Kac--Wakimoto's conformal-weight formula
\cite[Equation~(7.6)]{KacWakimoto2004} reads, in the present normalization,
$$
   h_W
   =
   \frac{
      \bigl((\mu+\rho,\theta)-N\bigr)^2-(k+1)^2
      +2(\mu^\natural,\mu^\natural+2\rho^\natural)
   }{4N}.
$$
At $k=-1$,
$$
   (\rho,\theta)=N,
   \qquad
   k+1=0.
$$
Moreover,
$$
   \mu^\natural=a\varpi+c\eta_1,
$$
and, since $c\in\{0,1\}$,
$$
   (a\varpi,a\varpi+2\rho_{A_1})
   =\frac{a(a+2)}2,
   \qquad
   (c\eta_1,c\eta_1+2\rho_{D_r})
   =c(2r-1).
$$
This gives \eqref{eq:zero-hW}.

The vector $u$ has $x$-weight $-a/2$.  For Li's operator
\cite[Proposition~5.4]{Li1996},
$$
   \Delta_x(z)\omega
   =\omega+J^{\{x\}}z^{-1}+\frac{\kappa_x}{2}z^{-2},
   \qquad
   \kappa_x=K(\varpi^\vee\mid\varpi^\vee)=\frac K2.
$$
Hence
$$
   L_0^R=L_0+x_0+\frac{\kappa_x}{2}
   =L_0+x_0+\frac K4,
$$
in agreement with \cite[Equation~(62)]{Jin2026}; this yields
\eqref{eq:zero-hR}.
\end{proof}

The cyclic Ramond ground module $C_\mu=B_\ell u$ may contain several
$\mathfrak g^\natural$-constituents.  We therefore use a uniform Casimir
upper bound rather than attempting to identify the complete bottom space.

\begin{lemma}[Root--vector coset bound]
\label{lem:zero-root-vector-bound}
Every $D_r$-constituent of $C_\mu$ belongs to the root or vector coset of
the $D_r$-weight lattice.  Consequently
\begin{equation}
\label{eq:zero-Cmax}
   \frac{\operatorname{tr}_{C_\mu}\Omega_A}{\dim C_\mu}
   +
   \frac{r+2}{2r}
   \frac{\operatorname{tr}_{C_\mu}\Omega_D}{\dim C_\mu}
   \le
   C_{\max},
\end{equation}
where
\begin{equation}
\label{eq:Cmax-definition}
   C_{\max}
   :=
   \frac{r^2-1}{2}
   +
   \frac{r+2}{2r}(2r-1).
\end{equation}
\end{lemma}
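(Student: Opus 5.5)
The plan is to prove the coset statement first, by a weight-lattice argument, and then obtain \eqref{eq:zero-Cmax} by bounding each composition factor using Lemma~\ref{lem:allowed-finite-types}.

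\emph{Coset statement.} Recall that $C_\mu=B_\ell u$. Here $u$ is the Ramond ground vector built in the proof of Lemma~\ref{lem:zero-orbit-restricted-type}, and its $D_r$-weight is $c\eta_1$. That is the root coset when $c=0$ and the vector coset when $c=1$. The twist $x=\varpi^\vee$ has no $D_r$-component, so the $D_r$-zero modes acting on $C_\mu$ are the untwisted ones. Hence $D_r$-weights in the twisted module are the honest $D_r$-weights.

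By Corollary~\ref{cor:H-to-B-quotient}, $B_\ell$ is a quotient of $H$. It is spanned by ordered monomials in two kinds of classes:
\begin{itemize}
\item Ramond classes of the currents, which lie in $\mathfrak g^\natural$; their $D_r$-weights are in the root lattice $Q(D_r)$.
\item Classes $\mathsf G(v)$ with $v\in U=L_{A_1}(\varpi)\boxtimes L_{D_r}(\eta_1)$; their $D_r$-weights are $\pm\varepsilon_j$, hence in $\eta_1+Q(D_r)$.
\end{itemize}
The action of the $D_r$ zero modes on $B_\ell$ is the adjoint one, and it is compatible with these weights. So every weight vector of $C_\mu$ has $D_r$-weight in $c\eta_1+\mathbb Z\eta_1+Q(D_r)$. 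Since $2\eta_1\in Q(D_r)$, this set is the union of the root and vector cosets. Every irreducible $D_r$-constituent therefore lies in one of these two cosets, and in particular is never a spinor representation.

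\emph{The bound.} Take a $\mathfrak g^\natural$-composition series of the finite-dimensional module $C_\mu$. Traces are additive along it, and each Casimir $\Omega_A,\Omega_D$ acts on a simple factor $L_{A_1}(a'\varpi)\boxtimes L_{D_r}(\lambda_D)$ by the scalars $c_2(a'\varpi)$ and $c_2(\lambda_D)$. Lemma~\ref{lem:allowed-finite-types} constrains each factor:
\begin{itemize}
\item $0\le a'\le r-1$, so $c_2(a'\varpi)=a'(a'+2)/2\le (r-1)(r+1)/2=(r^2-1)/2$, because $c_2$ is increasing for $a'\ge0$.
\item $\langle\lambda_D,\theta_D^\vee\rangle\le1$, so $\lambda_D\in\{0,\eta_1,\eta_{r-1},\eta_r\}$, or the analogous four weights when $D_3\cong A_3$.
\end{itemize}
The coset statement excludes the spinor weights $\eta_{r-1},\eta_r$. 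Hence $c_2(\lambda_D)\in\{0,2r-1\}$, and $c_2(\lambda_D)\le 2r-1$.

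The coefficient $(r+2)/(2r)$ is positive. So the weighted sum of Casimir eigenvalues on every factor is at most $C_{\max}$. The left side of \eqref{eq:zero-Cmax} is a dimension-weighted average of these values, so it is also at most $C_{\max}$.

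\emph{Main obstacle.} The delicate step is justifying the coset claim at the level of $B_\ell$ rather than of the vertex algebra. I would argue it as follows. The surjection $\pi_\ell$ of Corollary~\ref{cor:H-to-B-quotient} is $\mathfrak g^\natural$-equivariant, and the generators of $H$ come from $\mathfrak g^\natural$ and from $\mathfrak z_\chi(1)\cong U$ via \eqref{eq:U-to-Premet-degree-one}. In addition, Lemma~\ref{lem:Premet-positive-Li-modes} shows that each generator acts on the twisted module through a single untwisted mode of the same $D_r$-weight. Together these let the weight bookkeeping pass unchanged from $H$ to $B_\ell$ and then to the twisted module $C_\mu$.
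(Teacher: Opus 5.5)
Your proposal is correct and follows essentially the paper's proof. You track $D_r$-cosets from the initial weight $c\eta_1$ of $u$ through generators of $B_\ell$ whose $D_r$-weights lie in $Q(D_r)$ (currents) or $\eta_1+Q(D_r)$ ($G$-classes), then bound each constituent's Casimirs by $(r^2-1)/2$ and $2r-1$ using Lemma~\ref{lem:allowed-finite-types}, and average.

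The differences are minor. The paper gets generation of $B_\ell$ directly from strong generation of $\widetilde{\mathcal W}_\ell$ by currents, $G$-fields and the conformal vector, rather than through $H\twoheadrightarrow B_\ell$. Your appeal to Lemma~\ref{lem:Premet-positive-Li-modes} is misplaced, since that lemma concerns the $x_*$-twisted adjoint module; here $D_r$-weight compatibility follows simply from $\varpi^\vee$ having no $D_r$-component.
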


\begin{proof}
The initial $D_r$-highest weight is $c\eta_1$, hence belongs to the
subgroup of $P(D_r)/Q(D_r)$ generated by the vector class.  Current
zero modes change weights by roots, while the non-current generators
$G^{\{u\}}$ carry $D_r$-weights in
$$
   \eta_1+Q(D_r).
$$
Since $\widetilde{\mathcal W}_\ell$ is strongly generated by the currents,
the $G$-fields, and the conformal vector, its Ramond Zhu algebra
$B_\ell$ is generated by their Zhu classes.  Thus every operator used to
generate $C_\mu=B_\ell u$ changes the $D_r$-coset by either the root
class or the vector class (the conformal class has weight zero).
Therefore the cyclic module can meet only the root and vector cosets, never
a spinor coset.  This remains true for $r=3\cong A_3$, where the subgroup
generated by the vector weight $\omega_2$ is
$\{0,\omega_2\}$ modulo the root lattice.

By Lemma~\ref{lem:allowed-finite-types}, the largest $A_1$-Casimir among
allowed constituents is attained at $(r-1)\varpi$ and equals
$$
   \frac{(r-1)(r+1)}2
   =
   \frac{r^2-1}{2}.
$$
Among the allowed root/vector $D_r$-types, the largest Casimir is the
vector value
$$
   c_2(\eta_1)=2r-1.
$$
Averaging over irreducible constituents gives \eqref{eq:zero-Cmax}.
\end{proof}

\begin{proposition}[Zero-orbit energy gap]
Under \eqref{eq:zero-orbit-assumptions},
\begin{equation}
   s<N.
\end{equation}
\end{proposition}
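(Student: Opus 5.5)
The plan is to apply the weighted Casimir trace identity \eqref{eq:weighted-Casimir} to the finite-dimensional cyclic Ramond ground module $M=C_\mu=B_\ell u$ constructed in the proof of Lemma~\ref{lem:zero-orbit-restricted-type}. The left-hand side is bounded above by Lemma~\ref{lem:zero-root-vector-bound}, and the right-hand side is computed explicitly from Lemma~\ref{lem:zero-orbit-energies}. The resulting inequality is quadratic in $s$ and will force $s<N$.

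First I check that the identity applies. $C_\mu$ is nonzero, since it contains $u$, and finite dimensional. The vector $u$ lies in the lowest eigenspace of the twisted $L_0^R$, with eigenvalue $h_R$. The Ramond Zhu algebra acts through twisted zero modes, which commute with $L_0^R$, so $B_\ell$ preserves that eigenspace. Hence $L=[\omega]_{\mathrm R}$ acts on all of $C_\mu$ by the scalar $h_R$. Combining \eqref{eq:weighted-Casimir} with \eqref{eq:zero-Cmax} then gives
$$
   Nh_R+\frac{r-1}{4}\le C_{\max}.
$$

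Next I substitute \eqref{eq:zero-hW} and \eqref{eq:zero-hR}. Using $N+1=2r+2$, the constant terms contribute $(N+1)(r-1)/4=(r^2-1)/2$, which cancels the $A_1$ part of $C_{\max}$. Multiplying by $4$ leaves
$$
   s^2\le 2Na-a(a+2)-2c(2r-1)+\frac{2(r+2)(2r-1)}{r}.
$$
Now I use the bounds from Lemma~\ref{lem:zero-orbit-restricted-type}. Drop the term with $c\ge0$. The function $a(2N-2-a)$ is increasing for $0\le a\le N-1$, so on the range $0\le a\le r-1$ it is maximized at $a=r-1$, where it equals $(r-1)(3r+1)$. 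This yields
$$
   s^2\le 3r^2+2r+5-\frac4r.
$$
Comparing with $N^2=(2r+1)^2=4r^2+4r+1$, the difference is
$$
   N^2-\Bigl(3r^2+2r+5-\tfrac4r\Bigr)=r^2+2r-4+\tfrac4r,
$$
which is positive for $r\ge3$. Since $\mu$ is dominant, $s=\langle\mu,\theta^\vee\rangle\ge0$, and so $s<N$.

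I expect the main obstacle to be the first step: making sure that $L$ acts by the single scalar $h_R$ on all of $C_\mu$. This requires that the Ramond Zhu action of $B_\ell$ commutes with the twisted energy operator, and that the trace identity is stated for this twisted module rather than an untwisted one. Once that is settled, the rest is the arithmetic above.
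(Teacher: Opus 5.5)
Your proposal is correct and follows the paper's proof. You apply the weighted Casimir identity to $C_\mu$, bound the left side by $C_{\max}$, and substitute $h_R$, reducing to $s^2\le 4ar-a^2+4r+6-4/r-2c(2r-1)\le 3r^2+2r+5-4/r<N^2$; your remark that $B_\ell$ preserves the lowest $L_0^R$-eigenspace, so that $L$ acts on $C_\mu$ by the scalar $h_R$, makes explicit a point the paper uses without comment.
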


\begin{proof}
Apply the weighted Casimir identity
\eqref{eq:weighted-Casimir} to $C_\mu$:
$$
   Nh_R+\frac{r-1}{4}
   =
   \frac{\operatorname{tr}_{C_\mu}\Omega_A}{\dim C_\mu}
   +
   \frac{r+2}{2r}
   \frac{\operatorname{tr}_{C_\mu}\Omega_D}{\dim C_\mu}.
$$
Combining Lemmas~\ref{lem:zero-orbit-energies} and
\ref{lem:zero-root-vector-bound} gives
\begin{equation}
   s^2
   \le
   4ar-a^2
   +4r+6-\frac4r
   -2c(2r-1).
\end{equation}
Since
$$
   0\le a\le r-1,
   \qquad
   c\ge0,
$$
the right-hand side is at most
$$
   3r^2+2r+5-\frac4r.
$$
But
$$
\begin{aligned}
   N^2-
   \left(
      3r^2+2r+5-\frac4r
   \right)
   &=
   (2r+1)^2-
   \left(
      3r^2+2r+5-\frac4r
   \right)\\
   &=
   r^2+2r-4+\frac4r\\
   &>0
\end{aligned}
$$
for every $r\ge3$.  Since $s\ge0$, this proves $s<N$.
\end{proof}

Now impose the affine congruence.  Recall
$$
   Q(D_\ell)
   =
   \left\{
      (q_1,\ldots,q_\ell)\in\mathbb Z^\ell
      \,\middle|\,
      \sum_iq_i\equiv0\pmod2
   \right\},
$$
and
$$
   W(D_\ell)
$$
is the group of signed permutations with an even number of sign changes.

\begin{proposition}[Zero-orbit rigidity]
\label{prop:zero-orbit-rigidity}
Under \eqref{eq:zero-orbit-assumptions},
$$
   \mu=0.
$$
\end{proposition}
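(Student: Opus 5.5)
The plan is to turn the affine congruence $\mu+\rho\in W\rho+NQ(D_\ell)$ into a statement about residues modulo $N=2\ell-3$, and then to use the bounds already proved to rule out every $\mu\ne0$. Those bounds are $\mu=(x_1,x_2,c,0,\ldots,0)$ with $c\in\{0,1\}$ and $0\le a=x_1-x_2\le r-1$ (Lemma~\ref{lem:zero-orbit-restricted-type}), and $s=x_1+x_2<N$ (the zero-orbit energy gap).

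First I will write down
$$
\mu+\rho=(x_1+\ell-1,\;x_2+\ell-2,\;c+\ell-3,\;\ell-4,\ldots,1,0).
$$
Since $W(D_\ell)$ acts by signed permutations, each coordinate of $\mu+\rho$ must be congruent modulo $N$ to $\pm$ a distinct entry of $\rho=(\ell-1,\ldots,1,0)$. The parity condition defining $NQ(D_\ell)$ and the evenness of sign changes are kept for the end.

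The key bookkeeping device is the ``absolute residue class'' of an integer $v$. It is the class of $v$ modulo $N$ up to sign, represented by $\min(\bar v,N-\bar v)\in\{0,1,\ldots,\ell-2\}$. Because $N-(\ell-1)=\ell-2$, the entries of $\rho$ realize every class $0,1,\ldots,\ell-3$ exactly once and the class $\ell-2$ exactly twice. The tail coordinates $\ell-4,\ldots,1,0$ of $\mu+\rho$ are already small and use up the classes $0,\ldots,\ell-4$. The remaining three coordinates $x_1+\ell-1$, $x_2+\ell-2$, $c+\ell-3$ must therefore have, as a multiset, the classes $\{\ell-3,\ell-2,\ell-2\}$.

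Next I will do the case analysis. The bound $s<N$ gives $x_2\le \ell-2$ and $x_1\le 2\ell-4$, and $a\le\ell-3$ ties the two together.

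If $c=1$, then $c+\ell-3$ has class $\ell-2$. One of $x_2+\ell-2\in[\ell-2,2\ell-4]$ and $x_1+\ell-1$ must then have class $\ell-3$, and the other class $\ell-2$. Class $\ell-3$ forces a value $\equiv\pm(\ell-3)$, that is, $x_2+\ell-2=\ell$ or $x_1+\ell-1\in\{\ell,\,3\ell-6\}$ in range. Class $\ell-2$ forces a value in $\{\ell-2,\ell-1\}+N\mathbb Z$. I expect each subcase to contradict either dominance ($x_1\ge x_2$), the bound $a\le\ell-3$, or $s<N$.

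If $c=0$, then $x_2+\ell-2$ and $x_1+\ell-1$ must both have class $\ell-2$. This forces $x_2\in\{0,1\}$, and $x_1+\ell-1\in\{\ell-1,\ell-2+N,\ell-1+N\}$ within the range. The large options violate $a\le\ell-3$ or $s<N$, which leaves $x_1\in\{0\}$ or small values.

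For the surviving small candidates I will check the exact congruence, including the sign pattern and parity. A candidate such as $(1,1,0,\ldots)$ or $(1,0,\ldots)$ requires matching $x_2+\ell-2=\ell-1$ with $\ell-1$ and $x_1+\ell-1=\ell$ against the class of $\ell-2$. Since $\ell\not\equiv\pm(\ell-2)$ modulo $N$ for $\ell\ge5$, this fails. For $(1,1,0,\ldots)$, the assignments $\ell-1\leftrightarrow\ell-2$ and $\ell\leftrightarrow$ the class of $\ell-2$ fail in the same way. Only $\mu=0$ should survive.

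The main obstacle is keeping this finite case analysis airtight. In particular, I have to treat the representatives beyond $[0,N)$ correctly: $x_1+\ell-1$ can reach $3\ell-5>N$. I also have to make sure that coinciding classes, where $\ell-1$ and $\ell-2$ share a class, do not sneak in an extra solution. The even-sign and even-sum conditions of $W(D_\ell)$ and $Q(D_\ell)$ serve as the final filter for any borderline solution in which a class match exists but a sign mismatch might not be allowed. I would present the argument as a small table indexed by $c$ and by which coordinate receives the class $\ell-3$.
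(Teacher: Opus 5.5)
Your route is the same as the paper's. You reduce the congruence modulo $N$, let the tail coordinates $\ell-4,\ldots,1,0$ use up the entries $0,\ldots,\ell-4$ of $\rho$ (your absolute-residue-class bookkeeping is a clean way to say this), split on $c$, and finish with a lattice-parity check. Your $c=0$ branch is correct and already forces $x_1=x_2=0$, so your later checks of $(1,0,\ldots)$ and $(1,1,0,\ldots)$ are redundant.

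The gap is in the $c=1$ branch. You predict that every subcase there contradicts dominance, $a\le\ell-3$, or $s<N$, and this is false. Take the subcase where $x_1+\ell-1$ has class $\ell-3$ and $x_2+\ell-2$ has class $\ell-2$. It gives $x_2=1$ and $x_1\in\{1,2\ell-5\}$. The value $x_1=2\ell-5$ violates $a\le\ell-3$, but $\mu=(1,1,1,0,\ldots,0)$ survives. Indeed $\mu+\rho=(\ell,\ell-1,\ell-2,\ell-4,\ldots,1,0)$ has absolute classes exactly $\{\ell-3,\ell-2,\ell-2\}$ on its first three coordinates, and $a=0$, $s=2$ and dominance all hold.

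This survivor can only be removed by parity, and only by the right parity. The even-number-of-sign-changes condition in $W(D_\ell)$ is useless here: $\rho$ has a zero coordinate, which absorbs any extra sign change, so the ``sign mismatch'' filter you mention never fires. What works is the $Q(D_\ell)$ condition. The two admissible choices for the first three entries of $w\rho$ are $(-(\ell-3),\ell-1,\ell-2)$ and $(-(\ell-3),-(\ell-2),-(\ell-1))$. They give $q=(\mu+\rho-w\rho)/N=(1,0,\ldots,0)$ and $(1,1,1,0,\ldots,0)$ respectively, both of odd coordinate sum; this is exactly how the paper concludes.

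A quicker route to the same conclusion: $\rho-w\rho\in Q(D_\ell)$ for every $w\in W$, so the affine congruence forces $\mu\in Q(D_\ell)$, i.e.\ $x_1+x_2+c$ even. The weight $(1,1,1,0,\ldots,0)$ has odd sum. Once you correct the $c=1$ expectation and add this step explicitly, your argument goes through.
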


\begin{proof}
By Lemma~\ref{lem:zero-orbit-restricted-type},
$$
   \mu+\rho
   =
   (r+1+x_1,\ r+x_2,\ r-1+c,\ r-2,\ldots,1,0).
$$
Reduce the affine congruence modulo
$$
   N=2r+1.
$$
The final coordinates
$$
   r-2,r-3,\ldots,1,0
$$
are all strictly smaller than $r$.  A negative signed coordinate
$-t$, with $1\le t\le r+1$, has residue
$$
   N-t\ge r.
$$
Hence these final coordinates can only arise from the corresponding
positive entries of $W\rho$.  They consume the absolute values
$$
   0,1,\ldots,r-2.
$$
Only
$$
   r-1,\quad r,\quad r+1
$$
remain for the first three coordinates.

Suppose first that $c=0$.  The third residue is $r-1$, which can only
come from the positive entry $r-1$.  Thus the first two residues must be
those obtained from the signed entries $r$ and $r+1$, and hence each is
$r$ or $r+1$.  Since $s<N$,
$$
   x_2\le r,
$$
so
$$
   r+x_2\in\{r,r+1\}
$$
forces $x_2\in\{0,1\}$.  Also
$$
   x_1-x_2\le r-1,
$$
hence $x_1\le r$.  If $x_1=r$, the first coordinate has residue zero,
which is impossible; therefore
$$
   r+1+x_1<N.
$$
Its residue must be $r$ or $r+1$, so necessarily $x_1=0$.  Dominance
then gives $x_2=0$.

Now suppose that $c=1$.  Dominance gives $x_2\ge c=1$.  The third
residue is $r$, which can arise only from either $+r$ or
$-(r+1)$.  Since the second residue $r+x_2$ is at least $r+1$, it
cannot use the other residue-$r$ choice.  Among the two remaining absolute
values, the only residues in $\{r+1,\ldots,2r\}$ are $r+1$ and
$r+2$.  Hence
$$
   r+x_2\in\{r+1,r+2\},
   \qquad
   x_2\in\{1,2\}.
$$
If $x_2=2$, then the residue $r+2$ must come from $-(r-1)$,
so the first residue is $r$ or $r+1$.  But
$2\le x_1\le r+1$, by $x_1-x_2\le r-1$, and hence
$r+1+x_1\pmod N$ lies in
$\{r+3,\ldots,2r,0,1\}$, a contradiction.  Thus $x_2=1$; the
remaining residue condition then gives $r+1+x_1=r+2$, so $x_1=1$.

For $(x_1,x_2,c)=(1,1,1)$, the first three entries of $w\rho$
can only be
$$
   (-(r-1),r+1,r)\quad\text{or}\quad (-(r-1),-r,-(r+1)).
$$
The sign parity does not exclude these, since the zero coordinate of
$\rho$ can absorb an extra sign change.  Instead set
$q=(\mu+\rho-w\rho)/N$.  The two cases give respectively
$$
   q=(1,0,0,\ldots,0),\qquad q=(1,1,1,0,\ldots,0),
$$
both of odd coordinate sum.  Hence $q\notin Q(D_\ell)$, contradicting
the affine congruence.  Thus $c=1$ is impossible.

We conclude
$$
   c=0,\qquad x_1=x_2=0,
$$
and therefore $\mu=0$.
\end{proof}

\subsection{The minimal-orbit branch}

Assume now
\begin{equation}
\label{eq:minimal-orbit-assumptions}
   I_\ell^{\mathrm{cand}}\subset J(\mu),
   \qquad
   \mathcal V(J(\mu))
   =
   \overline{\mathcal O_{\min}},
   \qquad
   \xi:=\mu+\rho\in W\rho+NQ(D_\ell).
\end{equation}

Put
$$
   \mathfrak m_\mu:=J(\mu)\cap Z(U(\mathfrak g)).
$$
Because $J(\mu)$ is primitive, $\mathfrak m_\mu$ is the maximal ideal
of the Harish--Chandra centre defining its central character.  Petukhov's
minimal-orbit correspondence \cite[Proposition~3.1]{Petukhov2018}, applied
with this fixed maximal ideal and
$\mathcal V(J(\mu))=\overline{\mathcal O_{\min}}$, gives a
finite-dimensional simple module $M_J$ over
$$
   H=U(\mathfrak g,f_\theta)
$$
with central character $\mathfrak m_\mu$ such that
\begin{equation}
   \operatorname{Ann}_{U(\mathfrak g)}
   \operatorname{Skr}(M_J)
   =
   J(\mu).
\end{equation}
Since $I_\ell^{\mathrm{cand}}\subset J(\mu)$, the Skryabin lift
$\operatorname{Skr}(M_J)$ belongs to
$\mathcal C^{I_\ell^{\mathrm{cand}}}$.  Applying Arakawa's
\cite[Theorem~8.6]{Arakawa2015} to
$N=\langle s_A,s_D\rangle$, and using
\eqref{eq:membership-B-finite-BRST}, identifies its Whittaker space
$M_J=\operatorname{Wh}(\operatorname{Skr}(M_J))$ as a finite-dimensional
$B_\ell$-module.

Let $C\in Z(H)$ be the quadratic central element normalized as in
Section~\ref{subsec:Premet-vacuum-parameters}.  Under Premet's canonical
identification $Z(H)\cong Z(U(\mathfrak g))$
\cite[Corollary~5.1]{Premet2007}, the central character of $M_J$ is the
Harish--Chandra character of $J(\mu)$.  Therefore $C$ acts on
$M_J$ by
\begin{equation}
\label{eq:C-HC-eigenvalue}
   C
   =
   (\mu,\mu+2\rho)
   =
   \|\xi\|^2-\|\rho\|^2.
\end{equation}

The next lemma relates a primitive ideal to the highest-weight parameters
of the corresponding simple finite $W$-module by identifying the
Mili\v{c}i\'c--Soergel Whittaker model with Premet's highest-weight model.

\begin{lemma}[Primitive ideals and finite $W$ highest weights]
\label{lem:primitive-finiteW-bridge}
Let $J\subset U(\mathfrak g)$ be a primitive ideal with
$$
   \mathcal V(J)=\overline{\mathcal O_{\min}}
$$
and integral central character, and let $M_J$ be the finite-dimensional
simple $H$-module characterized by
$$
   \operatorname{Ann}_{U(\mathfrak g)}\operatorname{Skr}(M_J)=J.
$$
Put $\beta=\alpha_2$.  Then there exists an integral
$\langle s_\beta\rangle$-antidominant weight $\nu$ such that
$$
   \operatorname{Ann}_{U(\mathfrak g)}L_{\mathfrak g}(\nu)=J,
$$
and, in Premet's $\beta$-model,
\begin{equation}
\label{eq:primitive-finiteW-highest-bridge}
   M_J
   \cong
   L_H\!\left(
      \overline{\nu}+\overline{\delta},
      (\nu,\nu+2\rho)
   \right).
\end{equation}
In particular, the $\mathfrak g^\natural$-highest weight of $M_J$ is
$$
   \lambda_J^H=\overline{\nu}+\overline{\delta}.
$$
If a prescribed $\nu_0$ with
$\operatorname{Ann}L_{\mathfrak g}(\nu_0)=J$ is already
$\langle s_\beta\rangle$-antidominant, then one may take
$\nu=\nu_0$.
\end{lemma}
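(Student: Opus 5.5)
Choose a highest-weight parameter for $J$, move it into the antidominant chamber for the rank-one Levi of $\beta$, pass to the Mili\v{c}i\'c--Soergel simple Whittaker module, and then read off its Premet highest weight as in Proposition~\ref{prop:node2-identification}.

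\emph{Choice of $\nu$.} By Duflo's theorem every primitive ideal with integral central character is $\operatorname{Ann}L_{\mathfrak g}(\nu_0)$ for some integral $\nu_0$. If $\nu_0$ is already $\langle s_\beta\rangle$-antidominant, i.e. $\langle\nu_0+\rho,\beta^\vee\rangle\le0$, put $\nu=\nu_0$. Otherwise put $\nu=s_\beta\cdot\nu_0$. In that case $\langle\nu_0+\rho,\beta^\vee\rangle>0$, and since $\beta$ is simple, $L(s_\beta\cdot\nu_0)$ and $L(\nu_0)$ need not have the same annihilator. So the safer route is to pick $\nu_0$ differently. The primitive ideals attached to a regular integral central character $\chi_\lambda$ are $\operatorname{Ann}L(w\cdot\lambda)$, and these depend only on the left cell of $w$. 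A primitive ideal with $\mathcal V(J)=\overline{\mathcal O_{\min}}$ corresponds to a left cell in the subregular two-sided cell, or for singular integral characters to its translation. Every such left cell, in the convention of Theorem~\ref{thm:primitive-annihilator-compression}, contains an element $w$ with $s_\beta w<w$ (or $w s_\beta$, depending on convention). This uses the Xu/Lusztig description of the subregular cell, whose elements are the unique reduced words, together with the fact that the left cells there are indexed by the first (or last) simple reflection. For $\beta=\alpha_2$, the node of degree three, one shows that each left cell contains a path word that passes through $s_2$ in the required position. This yields a $\langle s_\beta\rangle$-antidominant representative $\nu$. For singular integral characters, apply translation to walls, which preserves both antidominance and annihilators along the Duflo order.

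\emph{Whittaker step.} Given such a $\nu$, Lemma~\ref{lem:chen-rank-one-annihilator} gives $\operatorname{Ann}\mathcal L(\nu,\psi_\beta)=J$. By Lemma~\ref{lem:premet-ms-parameter}, ${}^{\dot s_\beta}M_{\rm P}(\bar\nu,c_\beta(\nu))\cong M_{\rm MS}(\nu,\psi_\beta)$, so the inner twist of the simple top of $M_{\rm P}$ is $\mathcal L(\nu,\psi_\beta)$. By exactness of Skryabin's equivalence and Premet's Theorem~1.3, in the form \eqref{eq:Premet-MS-finiteW-parameters}, this simple top is $\operatorname{Skr}L_H(\bar\nu+\bar\delta,(\nu,\nu+2\rho))$. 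Its annihilator is $J$, because inner automorphisms fix two-sided ideals. The module $L_H(\bar\nu+\bar\delta,(\nu,\nu+2\rho))$ must then be finite-dimensional: Skryabin lifts of infinite-dimensional simples have associated varieties strictly larger than $\overline{\mathcal O_{\min}}$, by Losev's dimension formula. Finally, Losev's (or Premet's) theorem, which Petukhov uses, says that for a primitive ideal supported on the minimal orbit the finite-dimensional simple $H$-modules with annihilator $J$ form a single orbit under the component group, and this group is trivial for $\mathcal O_{\min}$. Hence the module coincides with $M_J$, which gives \eqref{eq:primitive-finiteW-highest-bridge}.

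\emph{Main obstacle.} The hard step is the existence of an antidominant representative inside the left cell, i.e. the cell-combinatorial choice of $\nu$. I would first try to reduce it to the $\tau$-invariant. Every $J$ with minimal-orbit support has a $\tau$-invariant containing all simple roots but one. One then needs $\beta$ in the $\tau$-invariant of $J$, so that $\nu$ can be taken with $\langle\nu+\rho,\beta^\vee\rangle\le0$. If some cell misses $\alpha_2$, I would replace $\beta$ by a $W$-conjugate simple root and transport Premet's model, as in the choice of $w$ in \eqref{eq:Premet-transported-triple}.
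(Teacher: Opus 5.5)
\paragraph{The gap.} It lies in your first step: finding a $\langle s_\beta\rangle$-antidominant integral $\nu$ with $\operatorname{Ann}L_{\mathfrak g}(\nu)=J$. You rightly call this the crux, but you do not establish it, and the reduction in your last paragraph cannot work.

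\begin{itemize}
\item \emph{The $\tau$-invariant reduction.} $\tau(J)$ is an invariant of $J$: it is the same for every $\nu$ with $\operatorname{Ann}L_{\mathfrak g}(\nu)=J$. In the paper's convention it records the right descent set, which is constant on the left cell. By contrast, $\beta$-antidominance of a representative $\nu=w\rho-\rho$ is the condition $w^{-1}\beta<0$, i.e.\ $s_\beta w<w$. That is a left-descent condition, and it varies inside the cell. Concretely, in Petukhov's convention $\tau(J_{2,\ell})=\Pi\setminus\{\alpha_2\}$, yet $\nu=-\alpha_2$ is an antidominant representative of $J_{2,\ell}$; it is exactly the one used in Proposition~\ref{prop:node2-identification}. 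With the complementary convention, the criterion would instead wrongly exclude $J_1,J_3,\ldots$, which have the antidominant representatives $y_i\rho-\rho$.
\item \emph{The fallback.} Replacing $\beta$ by another simple root proves a different statement. The formula $\bar\nu+\bar\delta$ and the later node-weight computations are tied to the $\alpha_2$-model.
\item \emph{The path-word claim.} Your earlier claim is true at regular integral character: the left cell of path words ending in $s_j$ contains the path word from node $2$ to node $j$, whose left descent set is $\{s_2\}$. But you only assert it, and you leave the side of the descent undetermined.
\item \emph{The singular case.} You only gesture at the singular integral case, and it is needed. The lemma is applied to $J(\mu)$ in Lemma~\ref{lem:minimal-no-spinor} before regularity of the central character is proved.
\end{itemize}

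\paragraph{The missing idea.} Run the argument in the other direction, starting from $M_J$ rather than from a Duflo parameter.

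\begin{enumerate}
\item Write the Premet highest pair of $M_J$ as $(\lambda_J^H,C_J)$. Put $\lambda=\lambda_J^H-\bar\delta$ and $c=C_J-(\lambda,\lambda+2\bar\rho)$.
\item Let $\nu$ be the extension of $\lambda$ whose coordinate $m=\langle\nu,\beta^\vee\rangle$ satisfies $\frac12m(m+2)=c$.
\item By Premet's Theorem~1.3 and Lemma~\ref{lem:premet-ms-parameter}, the inner twist of $\operatorname{Skr}(M_J)$ is the simple top of $M_{\rm MS}(\nu,\psi_\beta)$. So that top has annihilator $J$.
\item Integrality of $\nu$ follows from the integral central character.
\item The Mili\v{c}i\'c--Soergel standard module depends only on the $\langle s_\beta\rangle$-dot orbit of $\nu$. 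So you may pass to the antidominant representative for free.
\item Chen's theorem then yields $\operatorname{Ann}L_{\mathfrak g}(\nu)=J$ as an output rather than an input, and \eqref{eq:primitive-finiteW-highest-bridge} holds by construction.
\end{enumerate}

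This needs no cell combinatorics and treats regular and singular integral characters uniformly. It also requires neither a finite-dimensionality argument nor a uniqueness theorem.

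Uniqueness enters only in the clause about a prescribed $\nu_0$. There, your Whittaker step combined with Losev's component-group theorem is a legitimate alternative (the component group is trivial for $\mathcal O_{\min}$ in type $D$). The paper instead uses Premet's injectivity of $I\mapsto I^e$ together with Burnside's theorem.
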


\begin{proof}
Write the Premet highest pair of $M_J$ as
$$
   (\lambda_J^H,C_J).
$$
Set
$$
   \lambda=\lambda_J^H-\bar\delta,
   \qquad
   c=C_J-(\lambda,\lambda+2\bar\rho).
$$
By Premet's Theorem~1.3,
$$
   \operatorname{Wh}M_{\rm P}(\lambda,c)
   \cong
   Z_H(\lambda_J^H,C_J).
$$
Since $M_J$ is the unique simple quotient of this finite-$W$ Verma
module and Whittaker--Skryabin is an exact equivalence,
$\operatorname{Skr}(M_J)$ is the unique simple quotient of
$M_{\rm P}(\lambda,c)$.

Choose $m\in\mathbb C$ with
$$
   \frac12m(m+2)=c
$$
and let $\nu\in\mathfrak h^*$ be the extension of $\lambda$ determined
by
$$
   \bar\nu=\lambda,
   \qquad
   \langle\nu,\beta^\vee\rangle=m.
$$
The two choices of $m$ are $m$ and $-m-2$, hence the two resulting
weights form one $\langle s_\beta\rangle$-dot orbit.  By
Lemma~\ref{lem:premet-ms-parameter}, after the fixed inner conjugation the
module $M_{\rm P}(\lambda,c)$ is the Mili\v{c}i\'c--Soergel standard
module $M_{\rm MS}(\nu,\psi_\beta)$.  Under this comparison its simple
quotient is the inner twist of $\operatorname{Skr}(M_J)$.  Its two-sided
annihilator is therefore $J$: by the defining choice of $M_J$,
$\operatorname{Ann}_{U(\mathfrak g)}\operatorname{Skr}(M_J)=J$, and
inner automorphisms preserve every two-sided ideal of $U(\mathfrak g)$.

Proposition~2.1(3) of \cite{MilicicSoergel1997} gives
$
  \operatorname{Ann}_{U(\mathfrak g)}M_{\rm MS}(\nu,\psi_\beta)
  =\xi(\nu)U(\mathfrak g)
$, so the simple quotient has Harish--Chandra central character
$\xi(\nu)$.  Since this simple quotient has annihilator $J$, and the
central character of $J$ is integral, there is an integral weight
$\lambda_0$ with the same Harish--Chandra character.  The
Harish--Chandra parametrization of central characters then gives
$$
   \nu=w\mathbin\cdot\lambda_0
$$
for some $w\in W$.  Thus $\nu$ is integral, because the dot action
preserves the weight lattice.

We may now choose the
$W_{\psi_\beta}=\langle s_\beta\rangle$-anti-dominant representative in
this dot orbit.  Indeed, $m\in\mathbb Z$, and the two coroot coordinates
$m$ and $-m-2$ have shifted coordinates $m+1$ and $-m-1$;
thus one is $\le0$ (with equality only in the fixed case $m=-1$).
Proposition~2.1(1)--(2) of
\cite{MilicicSoergel1997} shows that this replacement changes neither the
standard Whittaker module nor its simple quotient, hence does not change its
annihilator $J$.  Lemma~\ref{lem:chen-rank-one-annihilator} therefore
applies and gives
$$
   \operatorname{Ann}_{U(\mathfrak g)}L_{\mathfrak g}(\nu)
   =
   \operatorname{Ann}_{U(\mathfrak g)}
      \mathcal L(\nu,\psi_\beta)
   =J.
$$
Finally Lemma~\ref{lem:premet-ms-parameter} identifies the finite-$W$
highest pair of the simple quotient with
$$
   \left(
      \bar\nu+\bar\delta,
      (\nu,\nu+2\rho)
   \right).
$$
By uniqueness of the simple quotient of a finite-$W$ Verma module this
is the highest pair of $M_J$, proving
\eqref{eq:primitive-finiteW-highest-bridge}.

Now suppose that a prescribed $\nu_0$ is already
$\langle s_\beta\rangle$-anti-dominant and
$\operatorname{Ann}L_{\mathfrak g}(\nu_0)=J$.  Since $J$ has integral
central character, the Harish--Chandra character of $\nu_0$ is integral;
hence $\nu_0$ itself is integral, being dot-conjugate to an integral
weight.  Lemma~\ref{lem:chen-rank-one-annihilator} now gives
$$
   \operatorname{Ann}\mathcal L(\nu_0,\psi_\beta)=J.
$$
Let $V_0$ be its inverse image under Whittaker--Skryabin, and put
$$
   I_0:=\operatorname{Ann}_H V_0,
   \qquad
   I_J:=\operatorname{Ann}_H M_J.
$$
Under Skryabin equivalence,
$\mathcal L(\nu_0,\psi_\beta)\cong Q_\chi\otimes_H V_0$ and
$\operatorname{Skr}(M_J)\cong Q_\chi\otimes_H M_J$.  In the proof of
\cite[Theorem~5.3(1)]{Premet2007} (equivalently, the primitive-spectrum
statement summarized in Theorem~1.2(i)), Premet proves for every
irreducible $H$-module $V$ that
$$
   \bigl(\operatorname{Ann}_H V\bigr)^e
   =
   \operatorname{Ann}_{U(\mathfrak g)}(Q_\chi\otimes_H V),
$$
and that $I\mapsto I^e$ is injective on $\operatorname{Prim}H$.
Both right-hand sides are $J$, so $I_0=I_J$.

It remains only to pass from equality of annihilators to equality of the
simple $H$-modules.  Since $M_J$ is finite dimensional, the faithful
action of $H/I_J$ on $M_J$ identifies $H/I_J$ with an irreducible
subalgebra of $\operatorname{End}_{\mathbb C}(M_J)$.  Burnside's theorem
therefore gives
$$
   H/I_J\cong \operatorname{End}_{\mathbb C}(M_J),
$$
so this quotient has, up to isomorphism, a unique simple module.  The
simple module $V_0$ has the same annihilator, hence factors through this
quotient, and therefore $V_0\cong M_J$.  Applying
Lemma~\ref{lem:premet-ms-parameter} to
$\nu_0$ gives exactly the pair
$$
   \left(
      \bar\nu_0+\bar\delta,
      (\nu_0,\nu_0+2\rho)
   \right),
$$
so $\nu_0$ may be used in
\eqref{eq:primitive-finiteW-highest-bridge}.
\end{proof}

\begin{lemma}[No spinor cosets]
\label{lem:minimal-no-spinor}
Every $D_r$-constituent of $M_J$ belongs to the root or vector coset.
\end{lemma}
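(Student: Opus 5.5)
The plan is to determine the $D_r$-coset of every constituent of $M_J$ from its Premet highest weight, and then to propagate that coset through the finite-$W$ generators exactly as in Lemma~\ref{lem:zero-root-vector-bound}.

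First, compute the coset of the highest weight. By Lemma~\ref{lem:primitive-finiteW-bridge}, $M_J\cong L_H(\bar\nu+\bar\delta,(\nu,\nu+2\rho))$ for an integral weight $\nu$. The affine congruence in \eqref{eq:minimal-orbit-assumptions} forces $\xi=\mu+\rho$ to have integral orthogonal coordinates. The representative $\nu$ produced by the bridge lemma is dot-conjugate to $\mu$ under $W$, because it has the same Harish--Chandra character and both weights are integral. Hence $\nu+\rho\in W\xi$, and $\nu$ also has integral orthogonal coordinates, so $\nu\in Q(D_\ell)+\{0,\varepsilon_1\}$, i.e.\ $\nu$ lies in the root or vector class.

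Next, restrict to $\mathfrak h^\natural=\beta^\perp$ in the $\beta=\alpha_2$ model and look at the $D_r$-component. The $D_r$ factor is spanned by $\varepsilon_4,\dots,\varepsilon_\ell$ after conjugation, as in Appendix~\ref{app:type-D-calculations}. Its projection of an integral-coordinate weight therefore has integral coordinates, so it lies in the root or vector coset of $P(D_r)/Q(D_r)$. Lemma~\ref{lem:Premet-shift-D} gives $\bar\delta=(r-1)\varpi+\eta_1$, whose $D_r$-part $\eta_1$ is also in the vector coset. Thus $\lambda_J^H=\bar\nu+\bar\delta$ has $D_r$-component in the root or vector coset.

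Then propagate. $M_J$ is a simple highest-weight $H$-module, so it is spanned by PBW monomials in the negative generators $\Theta(\mathfrak n^-(0))$ and $\Theta(\mathfrak n^-(1))$, together with central elements, applied to the highest vector. Generators from $\mathfrak z_\chi(0)=\mathfrak g^\natural$ shift $D_r$-weights by roots. Generators from $\mathfrak z_\chi(1)\cong L_{A_1}(\varpi)\boxtimes L_{D_r}(\eta_1)$, by \eqref{eq:U-to-Premet-degree-one}, shift them by weights in $\eta_1+Q(D_r)$. Each step therefore moves within the subgroup generated by the vector class, which is $\{0,[\eta_1]\}$ for $r\ge4$ and $\{0,[\omega_2]\}$ for $r=3$. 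Starting from the root or vector class, every weight of $M_J$, and hence every constituent's highest weight, stays in the root or vector coset.

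The main obstacle is the first step: making sure the coset statement holds for the specific $\nu$ chosen in Lemma~\ref{lem:primitive-finiteW-bridge}, and that the restriction to $D_r$ under Premet's identification of $\mathfrak h^\natural$ really sends integral orthogonal coordinates to integral $D_r$-coordinates. I would first try to verify this directly from the explicit $\beta=\alpha_2$ root model of the appendix. The $\mathfrak h^\natural$ projection only mixes $\varepsilon_1,\varepsilon_2$ via $\beta=\varepsilon_2-\varepsilon_3$, so the $D_r$-coordinates are unaffected and remain integers.
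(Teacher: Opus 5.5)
Your proposal is correct and follows essentially the same route as the paper. Integrality of $\nu+\rho\in W\xi$ gives integral $D_r$-coordinates for $\bar\nu+\bar\delta$, and the root/vector-coset shifts of the generators then carry this to every constituent. The paper phrases the first step as $\overline{\nu+\rho}-\bar\rho+\bar\delta$ with $\bar\rho,\bar\delta$ integral in the $D_r$ factor, whereas you use integrality of $\nu$ and $(\bar\delta)_D=\eta_1$.

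There is one harmless coordinate slip. In the appendix model the $D_r$ factor uses $\delta_1=\varepsilon_1$ and $\delta_j=\varepsilon_{j+2}$, i.e.\ $\varepsilon_1,\varepsilon_4,\ldots,\varepsilon_\ell$, and the projection to $\beta^\perp$ only mixes $\varepsilon_2,\varepsilon_3$. Your conclusion that the $D_r$-coordinates stay integral is unaffected.
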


\begin{proof}
Apply Lemma~\ref{lem:primitive-finiteW-bridge} to $J=J(\mu)$, and let
$\nu$ be the resulting $\langle s_2\rangle$-antidominant weight.  Since
$J(\mu)=\operatorname{Ann}L_{\mathfrak g}(\nu)$, the two highest-weight
modules have the same central character; hence
$$
   \nu+\rho=w(\mu+\rho)=w\xi
$$
for some $w\in W$.  The set $W\rho+NQ(D_\ell)$ is $W$-stable, so
$$
   \nu+\rho\in W\rho+NQ(D_\ell).
$$
In particular, $\nu+\rho$ has integral orthogonal coordinates.  For
$r\ge4$, this already places its $D_r$-restriction in the root/vector
subgroup of $P(D_r)/Q(D_r)$: the two spinor cosets have half-integral
orthogonal coordinates.  For the boundary case $r=3\cong A_3$, the same
statement means membership in the order-two subgroup
$Q(D_3)\sqcup(\eta_1+Q(D_3))$, since the other two level-one classes have
half-integral orthogonal coordinates in the chosen $\delta$-model.  By
Lemma~\ref{lem:primitive-finiteW-bridge},
\begin{equation}
   \lambda_H
   =
   \overline{\nu}+\overline{\delta}
   =
   \overline{\nu+\rho}
   -
   \overline{\rho}
   +
   \overline{\delta}.
\end{equation}
In the $D_r$-factor, $\overline{\rho}$ and
$\overline{\delta}$ have integral orthogonal coordinates.  Hence the
$D_r$-component of $\lambda_H$ has integral orthogonal coordinates and
cannot lie in either spinor coset.

As in Lemma~\ref{lem:zero-root-vector-bound}, the current generators move
weights by roots and the $G$-generators move them by vector-coset weights.
Thus the entire cyclic finite-$W$ module, and hence every
$D_r$-constituent of the simple module $M_J$, stays in the root/vector
subgroup of $P(D_r)/Q(D_r)$.
\end{proof}

\begin{proposition}[Quadratic central gap]
\label{prop:minimal-Casimir-gap}
Under \eqref{eq:minimal-orbit-assumptions},
\begin{equation}
   C\le 2-\frac2r<2.
\end{equation}
\end{proposition}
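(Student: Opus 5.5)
Apply the weighted Casimir identity \eqref{eq:weighted-Casimir} to the finite-dimensional $B_\ell$-module $M_J$, and combine it with the normalization \eqref{eq:Premet-C-L-relation} to turn the conformal class into the central element $C$. Since $M_J$ is simple over $H$ and finite dimensional, $C$ acts on $M_J$ by a scalar $C_J$ (equal to $(\mu,\mu+2\rho)$ by \eqref{eq:C-HC-eigenvalue}). The Ramond class $L=[\omega]_{\mathrm R}$ is central in $B_\ell$, so it also acts by a scalar $h$. Relation \eqref{eq:Premet-C-L-relation} gives $2Nh=C_J-c_0-\tfrac{r-1}{2}$, with $c_0=-r(r+2)$. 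We take $M=M_J$ with the \emph{untwisted} Ramond structure, i.e.\ the Zhu action through $\pi_\ell$. This matches the convention under which \eqref{eq:weighted-Casimir} and Lemma~\ref{lem:exact-Ramond-Premet-normalization} were stated, so no Li twist energy enters.

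Substituting $Nh=\tfrac12\bigl(C_J+r(r+2)-\tfrac{r-1}{2}\bigr)$ into \eqref{eq:weighted-Casimir} gives
$$
   \frac{C_J}{2}
   =\frac{\operatorname{tr}_{M_J}\Omega_A}{\dim M_J}
    +\frac{r+2}{2r}\frac{\operatorname{tr}_{M_J}\Omega_D}{\dim M_J}
    -\frac{r(r+2)}{2}.
$$
The constant terms combine because $-\tfrac{r-1}{4}+\tfrac{r-1}{4}$ cancels. The averaged Casimirs are bounded exactly as in Lemma~\ref{lem:zero-root-vector-bound}. Lemma~\ref{lem:allowed-finite-types} bounds the $A_1$-types by $a\le r-1$, so the $A_1$-Casimir is at most $\tfrac{r^2-1}{2}$. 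Lemma~\ref{lem:minimal-no-spinor} excludes spinor cosets, and the $D_r$-level bound $\langle\lambda_D,\theta_D^\vee\rangle\le1$ then leaves only $0$ and $\eta_1$, so the $D_r$-Casimir is at most $2r-1$. Hence the right-hand side is at most $C_{\max}-\tfrac{r(r+2)}{2}$, and
$$
   C_J\le r^2-1+\frac{(r+2)(2r-1)}{r}-r(r+2)
   =r^2-1+2r+3-\frac2r-r^2-2r
   =2-\frac2r .
$$
This is the claimed bound $C\le 2-\tfrac2r<2$.

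The main obstacle is the bookkeeping of constants. We must make sure that the module $M_J$, obtained through Petukhov and Arakawa's Theorem~8.6, carries exactly the Ramond Zhu action to which both \eqref{eq:weighted-Casimir} and the identity $L'=C-c_0$ apply, with the same additive constant. It is tempting to reuse the Li-twisted energies from the vacuum computation, but that would be wrong here. As a check, we would verify the sign convention against the vacuum case: feeding the twisted vacuum weight $\lambda_*$ into the formula reproduces $C=0$, consistent with Lemma~\ref{lem:vacuum-C-zero}, since there $C_{\max}$-type averages enter with $\kappa_{x_*}$ shifts. If a discrepancy appears, we would recompute $c_0$ via $c_0=-\tfrac12p_{\mathfrak g}(-h^\vee)$, using $p_{D_\ell}(k)=(k+2)(k+r)$ at $k=-2r$, which gives $-\tfrac12(2-2r)(-r)=-r(r-1)$. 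That value does not agree with Premet's $-r(r+2)$, so we would resolve which constant the identity \eqref{eq:Premet-C-L-relation} actually encodes before finalizing the arithmetic above.
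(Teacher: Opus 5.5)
Your argument is correct and is the paper's proof: combining \eqref{eq:Premet-C-L-relation} with \eqref{eq:weighted-Casimir} on $M_J$ gives $C=2\overline{\mathcal C}(M_J)-r(r+2)$, and the bound $\overline{\mathcal C}(M_J)\le C_{\max}$ from Lemmas~\ref{lem:allowed-finite-types} and~\ref{lem:minimal-no-spinor} then yields $C\le 2-\frac2r$. The discrepancy you raise at the end is only an arithmetic slip, so nothing remains to resolve: $h^\vee=2\ell-2=2r+2$, so at $k=-2r-2$ one gets $p_{D_\ell}(k)=(-2r)(-r-2)=2r(r+2)$ and hence $c_0=-r(r+2)$, in agreement with Premet.
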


\begin{proof}
Since $M_J$ is finite dimensional over $B_\ell$, the finite-type
bound of Lemma~\ref{lem:allowed-finite-types}, together with
Lemma~\ref{lem:minimal-no-spinor}, gives
$$
   \overline{\mathcal C}(M_J)
   :=
   \frac{\operatorname{tr}_{M_J}\Omega_A}{\dim M_J}
   +
   \frac{r+2}{2r}
   \frac{\operatorname{tr}_{M_J}\Omega_D}{\dim M_J}
   \le C_{\max},
$$
where $C_{\max}$ is defined in \eqref{eq:Cmax-definition}.

The relation \eqref{eq:Premet-C-L-relation} and the weighted trace identity
\eqref{eq:weighted-Casimir} give
$$
   C
   =
   2\overline{\mathcal C}(M_J)-r(r+2).
$$
Therefore
$$
\begin{aligned}
   C
   &\le
   2C_{\max}-r(r+2)\\
   &=
   2-\frac2r,
\end{aligned}
$$
which is strictly smaller than $2$ for $r\ge3$.
\end{proof}

The affine lattice gives a much larger gap away from the regular central
character.

\begin{lemma}[Type-$D$ norm gap]
\label{lem:D-norm-gap}
Let
$$
   \zeta\in W\rho+NQ(D_\ell).
$$
Then
$$
   \|\zeta\|^2-\|\rho\|^2\ge0.
$$
Moreover,
$$
   \|\zeta\|^2-\|\rho\|^2=0
   \quad\Longleftrightarrow\quad
   \zeta\in W\rho,
$$
while otherwise
\begin{equation}
\label{eq:D-norm-positive-gap}
   \|\zeta\|^2-\|\rho\|^2
   \ge2N.
\end{equation}
\end{lemma}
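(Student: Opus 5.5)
Since both norms and the set $W\rho$ are $W$-invariant, I will first reduce to a translate of $\rho$ itself and then bound the resulting lattice quadratic form one coordinate at a time. Write $\zeta=w\rho+Nq$ with $w\in W$ and $q\in Q(D_\ell)$. Because $W=W(D_\ell)$ preserves both $Q(D_\ell)$ and the norm, I replace $\zeta$ by $w^{-1}\zeta=\rho+Np$ with $p:=w^{-1}q\in Q(D_\ell)$. Membership of $\zeta$ in $W\rho$ is unchanged by this replacement. Then
$$
   \|\zeta\|^2-\|\rho\|^2
   =N\,f(p),
   \qquad
   f(p):=2(\rho,p)+N\|p\|^2 .
$$
So the lemma reduces to three claims: $f(p)\ge0$; $f(p)=0$ exactly when $\rho+Np\in W\rho$; and otherwise $f(p)\ge2$.

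\emph{Parity.} For $p\in Q(D_\ell)$ the number $(\rho,p)$ is an integer, and $\|p\|^2=\sum p_i^2\equiv\sum p_i\equiv0\pmod 2$. Hence $f(p)$ is an even integer. It therefore suffices to show $f(p)\ge-1$ and to identify the case $f(p)=0$.

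\emph{Coordinatewise bound.} I write $f(p)=\sum_i g_i$ with
$$
   g_i:=Np_i^2+2\rho_ip_i,\qquad \rho_i=\ell-i\in\{0,\ldots,\ell-1\},\qquad N=2\ell-3 .
$$
\begin{itemize}
\item If $p_i=0$, then $g_i=0$.
\item If $p_i\ge1$, then $g_i\ge N$.
\item If $p_i\le-2$, then $g_i=|p_i|(N|p_i|-2\rho_i)\ge 2(2N-2(\ell-1))>0$.
\item If $p_i=-1$, then $g_i=N-2\rho_i=2(\ell-1-\rho_i)-1$. This is odd, equals $-1$ only for $i=1$, equals $1$ only for $i=2$, and is at least $3$ for $i\ge3$.
\end{itemize}
Thus every $g_i\ge0$ except possibly $g_1=-1$, so $f(p)\ge-1$. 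By parity, $f(p)\ge0$, and $f(p)\ne0$ forces $f(p)\ge2$. Multiplying by $N$ gives \eqref{eq:D-norm-positive-gap}.

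\emph{Equality case.} Suppose $f(p)=0$. By the case list, either all $g_i=0$, which forces $p=0$, or $g_1=-1$ is compensated by exactly one $g_j=1$ with all other $g_i=0$, which forces $p=-\varepsilon_1-\varepsilon_2$. In the second case
$$
   \rho+Np=(-(\ell-2),-(\ell-1),\ell-3,\ldots,0).
$$
This is a signed permutation of $\rho$ with two sign changes, so it lies in $W\rho$. Conversely, $\zeta\in W\rho$ trivially gives norm difference $0$.

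The only delicate point is the equality analysis: $f(p)=0$ does \emph{not} force $q=0$, so the correct statement is membership in $W\rho$. This is exactly why the odd values $g_1=-1$ and $g_2=1$ from the $p_i=-1$ case must be tracked carefully rather than absorbed into a crude Cauchy--Schwarz bound.
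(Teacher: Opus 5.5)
Your proof is correct and follows essentially the paper's route: reduce by $W$ to $\rho+Nq$, bound the coordinatewise contributions $Nn^2+2tn$ (only the first coordinate with $n=-1$ can go negative, to $-1$), identify the equality case $q=-\varepsilon_1-\varepsilon_2=-\theta$ as $s_\theta\rho$, and use evenness of $Q(D_\ell)$ for the gap $\ge 2N$. The only cosmetic difference is that you use evenness of $f$ to pass from $f\ge-1$ to $f\ge0$ and then enumerate small values in the equality case, whereas the paper uses the odd coordinate sum of the remaining entries when $q_1=-1$ to force a compensating contribution of at least $1$.
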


\begin{proof}
After applying an element of $W$, write
$$
   \zeta=\rho+Nq,
   \qquad
   q\in Q(D_\ell).
$$
Then
\begin{equation}
\label{eq:D-norm-Rq}
   \|\zeta\|^2-\|\rho\|^2
   =
   N R(q),
   \qquad
   R(q):=
   N\|q\|^2+2(\rho,q).
\end{equation}
Write
$$
   \rho=(r+1,r,r-1,\ldots,1,0).
$$
For a coordinate of $\rho$ equal to $t$, put
$$
   f_t(n)=Nn^2+2tn,
   \qquad n\in\mathbb Z.
$$
Since $N=2r+1$, one has $f_t(n)\ge0$ for
$0\le t\le r$, with equality only at $n=0$.  For the first
coordinate, $t=r+1$, the only negative value is
$$
   f_{r+1}(-1)=-1;
$$
all other nonzero values are positive.  Hence, if $q_1\ne-1$, every
coordinate contribution is nonnegative and $R(q)=0$ forces $q=0$.

Suppose instead that $q_1=-1$.  Since
$$
   Q(D_\ell)=\{(q_i)\in\mathbb Z^\ell:\ \sum_iq_i\in2\mathbb Z\},
$$
the remaining coordinates have odd total sum, so at least one of them is
odd.  Among all odd $n$ and all $0\le t\le r$, the smallest value of
$f_t(n)$ is
$$
   f_r(-1)=1,
$$
and this minimum is unique.  Thus the contribution $-1$ from the first
coordinate is compensated by at least $1$, with equality only when
$$
   q=(-1,-1,0,\ldots,0)=-\theta.
$$
Consequently
$$
   R(q)\ge0,
$$
with equality precisely for
$$
   q=0
   \qquad\text{or}\qquad
   q=-\theta.
$$
In the latter case
$$
   \rho-N\theta=s_\theta\rho
$$
because
$$
   \langle\rho,\theta^\vee\rangle=N.
$$
Thus equality in \eqref{eq:D-norm-Rq} is equivalent to
$\zeta\in W\rho$.

Finally, $Q(D_\ell)$ is an even lattice, so
$$
   \|q\|^2\in2\mathbb Z.
$$
Hence $R(q)\in2\mathbb Z$.  If $R(q)>0$, then $R(q)\ge2$, which gives
\eqref{eq:D-norm-positive-gap}.  The bound is sharp: for
$q=(-1,0,-1,0,\ldots,0)$ one has $R(q)=2$.
\end{proof}

\begin{proposition}[Regularity of the minimal-orbit central character]
\label{prop:minimal-regular-central-character}
Under \eqref{eq:minimal-orbit-assumptions},
$$
   \xi=\mu+\rho\in W\rho.
$$
Equivalently, $J(\mu)$ has the regular integral central character
$\mathfrak m_0$.
\end{proposition}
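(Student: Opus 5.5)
The plan is to combine the quadratic central gap of Proposition~\ref{prop:minimal-Casimir-gap} with the type-$D$ norm gap of Lemma~\ref{lem:D-norm-gap}.

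First, I will compute the eigenvalue of the quadratic central element $C$ on $M_J$. By \eqref{eq:C-HC-eigenvalue}, $C$ acts on the finite-dimensional simple $H$-module $M_J$ by the scalar
$$
   \|\xi\|^2-\|\rho\|^2 .
$$
This uses Premet's canonical identification $Z(H)\cong Z(U(\mathfrak g))$ and the normalization of $C$ fixed in Lemma~\ref{lem:exact-Ramond-Premet-normalization}. Since $M_J$ is a nonzero finite-dimensional $B_\ell$-module, Proposition~\ref{prop:minimal-Casimir-gap} then gives
$$
   \|\xi\|^2-\|\rho\|^2\le 2-\frac2r<2 .
$$

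Second, I apply Lemma~\ref{lem:D-norm-gap} with $\zeta=\xi$. This is legitimate because $\xi\in W\rho+NQ(D_\ell)$ by the hypothesis \eqref{eq:minimal-orbit-assumptions}. The lemma leaves only two possibilities: either $\xi\in W\rho$, or
$$
   \|\xi\|^2-\|\rho\|^2\ge 2N=2(2r+1)\ge 14 .
$$
The second alternative contradicts the bound from the first step, so $\xi\in W\rho$.

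Finally, $\xi\in W\rho$ means $\mu=w\rho-\rho=w\mathbin\cdot 0$ for some $w\in W$. Hence $J(\mu)$ contains the same maximal ideal of the center as $\operatorname{Ann}L_{\mathfrak g}(0)$, namely the regular integral character $\mathfrak m_0$. This is the asserted equivalence, and the converse direction is the same Harish--Chandra statement read backwards.

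The only delicate point is the sign and normalization consistency between \eqref{eq:C-HC-eigenvalue} and the derivation of $C=2\overline{\mathcal C}(M_J)-r(r+2)$ in Proposition~\ref{prop:minimal-Casimir-gap}. I will check that both use Premet's $C$ normalized to have eigenvalue $(\mu,\mu+2\rho)$ on the module with Harish--Chandra parameter $\mu$. Once that is confirmed, the argument reduces to comparing the two numerical bounds, $2-2/r$ and $2N$.
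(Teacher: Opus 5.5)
Your proposal is correct and is essentially the paper's own proof: it likewise reads $C=\|\xi\|^2-\|\rho\|^2$ from \eqref{eq:C-HC-eigenvalue}, uses the dichotomy of Lemma~\ref{lem:D-norm-gap}, and rules out the branch $C\ge 2N>2$ by Proposition~\ref{prop:minimal-Casimir-gap}. Your remark on the equivalence with $\mathfrak m_0$ and your normalization check are both consistent with the paper, which takes a single $C$, fixed in Section~\ref{subsec:Premet-vacuum-parameters}, in both the eigenvalue formula and the Casimir bound.
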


\begin{proof}
By \eqref{eq:C-HC-eigenvalue} and Lemma~\ref{lem:D-norm-gap},
either
$$
   C=0
$$
and $\xi\in W\rho$, or
$$
   C\ge2N>2.
$$
The second possibility contradicts
Proposition~\ref{prop:minimal-Casimir-gap}.  Thus $C=0$ and
$\xi\in W\rho$.
\end{proof}

Thus $J(\mu)$ lies in the regular minimal primitive fibre.
Petukhov proves that for a simply-laced simple Lie algebra not of type $A$,
this fibre contains
exactly $\ell$ primitive ideals, indexed by Dynkin vertices
\cite[Proposition~5.4]{Petukhov2018}.  Put
$$
   J_i:=\operatorname{Ann}_{U(\mathfrak g)}L_{\mathfrak g}(-\alpha_i).
$$
With Petukhov's convention
$\tau_R(w)=\{\alpha\in\Pi:w\alpha\in\Phi^+\}$, these are precisely the
ideals with $\tau(J_i)=\Pi\setminus\{\alpha_i\}$.

To compute their finite-$W$ highest weights, set $y_i=z_i^{-1}$ and
$\nu_i=y_i\rho-\rho$.  The path word $y_i$ begins with $s_2$ and ends
with $s_i$, so its ordinary Coxeter left and right descent sets are
$\{s_2\}$ and $\{s_i\}$.  Hence
$$
   \tau\!\left(\operatorname{Ann}L_{\mathfrak g}(\nu_i)\right)
   =\Pi\setminus\{\alpha_i\},
   \qquad
   \operatorname{Ann}L_{\mathfrak g}(\nu_i)=J_i,
$$
by \cite[Proposition~5.4]{Petukhov2018}; this is the reason for the inverse.
Moreover $y_i^{-1}\alpha_2<0$, so $\nu_i$ is the
$\langle s_2\rangle$-antidominant representative for the
$\alpha_2$-Whittaker model.  Lemma~\ref{lem:primitive-finiteW-bridge}
therefore applies with this prescribed $\nu_i$ and gives
$$
   \lambda_i^H=\bar\delta+\overline{\nu_i},
   \qquad C=(\nu_i,\nu_i+2\rho)=0.
$$
The path calculation is recorded in Appendix~\ref{app:type-D-calculations}.

\begin{proposition}[Regular node weights]
The finite-$W$ highest weights attached to $J_i$ are
\begin{align}
   \lambda_1^H
   &=
   r\varpi,
   \label{eq:node-weight-1}\\
   \lambda_2^H
   &=
   (r-1)\varpi+\eta_1,
   \\
   \lambda_i^H
   &=
   (r+1-i)\varpi+\eta_{i-1},
   &&3\le i\le r-1,
   \label{eq:node-weight-middle}\\
   \lambda_{\ell-2}^H
   &=
   \varpi+\eta_{r-1}+\eta_r,
   \\
   \lambda_{\ell-1}^H
   &=
   2\eta_r,
   \\
   \lambda_\ell^H
   &=
   2\eta_{r-1}.
   \label{eq:node-weight-spin2}
\end{align}
For $r=3$, the middle range is empty and the last three formulas are read
under $D_3\cong A_3$.
\end{proposition}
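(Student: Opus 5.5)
The plan is a direct computation from the formula $\lambda_i^H=\bar\delta+\overline{\nu_i}$, with $\nu_i=y_i\rho-\rho$ and $y_i=z_i^{-1}$, established just before the statement. It also uses $\bar\delta=(r-1)\varpi+\eta_1$ from Lemma~\ref{lem:Premet-shift-D}. So what has to be done is to make the restriction $\nu\mapsto\bar\nu$ to $\mathfrak h^\natural=\beta^\perp$, with $\beta=\alpha_2=\varepsilon_2-\varepsilon_3$, completely explicit in orthogonal coordinates, and then evaluate it on the $\ell$ path weights.

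\emph{Step 1: coordinates on $\mathfrak h^\natural$.} The roots orthogonal to $\alpha_2$ are $\pm(\varepsilon_2+\varepsilon_3)$, which give the $A_1$-factor, together with $\pm\varepsilon_a\pm\varepsilon_b$ for $a,b\in\{1,4,5,\dots,\ell\}$, which give a $D_r$ on $r=\ell-2$ coordinates. Thus $\varpi=\tfrac12(\varepsilon_2+\varepsilon_3)$, and the $A_1$-label of $\bar\nu$ is $\langle\nu,(\varepsilon_2+\varepsilon_3)^\vee\rangle=\nu_2+\nu_3$. The $D_r$-component is the vector $(\nu_1,\nu_4,\dots,\nu_\ell)$. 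I would fix the ordering and positive system of this $D_r$ exactly as in Appendix~\ref{app:type-D-calculations}, namely coordinates ordered $(1,4,\dots,\ell)$, so that $\eta_1=\varepsilon_1$ and $\eta_{r-1},\eta_r$ are the two half-spin weights in the last two coordinates. This convention is consistent with $\bar\delta=(r-1)\varpi+\eta_1$ having $D_r$-part $\varepsilon_1$.

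\emph{Step 2: the path weights.} I would compute $y_i\rho$ by applying $s_2$ first and $s_i$ last to $\rho=(r+1,r,\dots,1,0)$, reading the word $y_i=s_2s_3\cdots s_i$ from right to left. Equivalently, write $\nu_i=-z_i^{-1}\cdots$, using that $y_i$ is a chain of adjacent transpositions (with the final sign change in the case $i=\ell$). Two checks follow.
\begin{itemize}
\item $i=2$: $\nu_2=-\alpha_2$, so $\overline{\nu_2}=0$ and $\lambda_2^H=\bar\delta$.
\item $i=1$: $\nu_1=(-1,-1,2,0,\dots,0)$, which has $A_1$-label $1$ and $D_r$-part $-\varepsilon_1$, so $\lambda_1^H=r\varpi$.
\end{itemize}
For $3\le i\le\ell-1$, the word cycles the entries in positions $2,\dots,i+1$, producing $\nu_i=(0,-1,i-1,\dots)$ up to a tail of $-1$'s in positions $4,\dots,i+1$ and a $+(i-1)$-type shift. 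Reading off $\nu_2+\nu_3$ and the tail then gives the $A_1$-label $(r+1-i)-(r-1)$ together with a $D_r$-part equal to $-\varepsilon_1+\varepsilon_1+\dots$. Adding $\bar\delta$ yields $\eta_{i-1}=\varepsilon_1+\varepsilon_4+\dots+\varepsilon_{i+1}$, which is the $D_r$ fundamental weight for $i-1\le r-2$. At $i=\ell-2,\ell-1$, the same sum $\varepsilon_1+\varepsilon_4+\cdots$ must be re-expressed as $\eta_{r-1}+\eta_r$, respectively $2\eta_r$. For $i=\ell$ the last letter is $s_\ell$, which flips the sign of the final coordinate pair, and this produces $2\eta_{r-1}$.

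\emph{Main obstacle.} The arithmetic is routine. The real risk is bookkeeping of conventions, in three places:
\begin{itemize}
\item the direction in which the word $y_i$ acts, which is why $y_i$ rather than $z_i$ appears;
\item the identification of the two half-spin weights $\eta_{r-1}$ and $\eta_r$ with the nodes $\ell$ and $\ell-1$; this swap is visible in the last two formulas;
\item the orientation of the $D_r$ positive system inherited from Premet's $\beta$-model.
\end{itemize}
I would settle these by matching the $i=2$ and $i=1$ cases against $\bar\delta$ and verifying dominance of each resulting weight. The case $r=3$ then follows by the same coordinates read through $D_3\cong A_3$, as in Appendix~\ref{app:rank-five-boundary}.
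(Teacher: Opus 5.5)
Your route is the paper's own: compute $\nu_i=y_i\rho-\rho$, restrict to $\mathfrak h^\natural$ by taking $c_2+c_3$ as the $A_1$-label and $(c_1,c_4,\ldots,c_\ell)$ as the $D_r$-part, and add $\bar\delta=(r-1)\varpi+\eta_1$. Your checks at $i=1,2$ are correct. But the proposition is nothing more than this computation, so the generic case $3\le i\le\ell-1$ is where the proof lives, and as written that step fails.

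\emph{The generic vector is wrong.} Taken at face value, your $\nu_i=(0,-1,i-1,-1,\ldots,-1,0,\ldots)$ has $A_1$-label $i-2$, not $2-i$, and $D_r$-part $-(\varepsilon_4+\cdots+\varepsilon_{i+1})$. Adding $\bar\delta$ would give $(r+i-3)\varpi+\varepsilon_1-\varepsilon_4-\cdots-\varepsilon_{i+1}$, which is not even $D_r$-dominant. The labels you then quote ($2-i$, and a $D_r$-part ``$-\varepsilon_1+\varepsilon_1+\cdots$'') do not follow from that vector; they are read back from the answer.

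\emph{The order of application is stated inconsistently.} ``Applying $s_2$ first and $s_i$ last'' contradicts ``reading $s_2s_3\cdots s_i$ from right to left.'' Followed literally, the first instruction computes $z_i\rho-\rho=\mu_i$. For $i=1$ that is $(-2,1,1,0,\ldots)$, which would give the non-dominant weight $(r+1)\varpi-\eta_1$. Your own $i=1$ check uses the right-to-left order, which is the correct one.

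\emph{The repair is short.} Acting right to left, $y_i=s_2s_3\cdots s_i$ moves $\rho_{i+1}=r+1-i$ into position $2$ and shifts the entries in positions $2,\ldots,i$ one step to the right. Hence $\nu_i=-(i-1)\varepsilon_2+\varepsilon_3+\cdots+\varepsilon_{i+1}$ for $3\le i\le\ell-1$. For $i=\ell$, $s_\ell\rho$ ends in $(0,-1)$, and the same shift gives $\nu_\ell=-r\varepsilon_2+\varepsilon_3+\cdots+\varepsilon_{\ell-1}-\varepsilon_\ell$.

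So the $A_1$-label is $2-i$ (resp.\ $1-r$), and the $D_r$-part is $\delta_2+\cdots+\delta_{i-1}$ (resp.\ $\delta_2+\cdots+\delta_{r-1}-\delta_r$), with no $\delta_1=\varepsilon_1$ term. Adding $\bar\delta$ yields $(r+1-i)\varpi+\delta_1+\cdots+\delta_{i-1}$, and $\delta_1+\cdots+\delta_{r-1}-\delta_r=2\eta_{r-1}$ for $i=\ell$. These are the stated weights once you use $\delta_1+\cdots+\delta_{i-1}=\eta_{i-1}$ for $i\le r-1$, $\delta_1+\cdots+\delta_{r-1}=\eta_{r-1}+\eta_r$, and $\delta_1+\cdots+\delta_r=2\eta_r$.

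This is exactly the computation in Appendix~\ref{app:type-D-calculations}. The dominance check you proposed would have caught the error.
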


\begin{proposition}[Node-$2$ rigidity]
\label{prop:node2-rigidity}
If
$$
   I_\ell^{\mathrm{cand}}\subset J_i
$$
for a regular minimal-orbit primitive ideal $J_i$, then $i=2$.
\end{proposition}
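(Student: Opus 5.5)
If $I_\ell^{\mathrm{cand}}\subset J_i$, I will show that $i\ne2$ forces the simple finite $W$-module $M_{J_i}$ to violate the finite-type bounds of Lemma~\ref{lem:allowed-finite-types}.

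First I place $M_{J_i}$ over $B_\ell$, exactly as at the start of the minimal-orbit branch. Petukhov's correspondence gives $M_{J_i}$, and since $I_\ell^{\mathrm{cand}}\subset J_i$, the Skryabin lift lies in $\mathcal C^{I_\ell^{\mathrm{cand}}}$. Arakawa's quotient equivalence \cite[Theorem~8.6]{Arakawa2015} then makes $M_{J_i}$ a finite-dimensional $B_\ell$-module.

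Next I identify its highest-weight vector. By Lemma~\ref{lem:primitive-finiteW-bridge}, applied with the prescribed antidominant weight $\nu_i=y_i\rho-\rho$, $M_{J_i}$ is the simple highest-weight module with highest pair $(\lambda_i^H,0)$. The weights $\lambda_i^H$ are listed in the proposition on regular node weights. A Premet highest-weight vector is in particular a $\mathfrak g^\natural$-highest vector, so it generates an irreducible $A_1\oplus D_r$-constituent of highest weight $\lambda_i^H$. Equivalently, the constituent of highest weight $\lambda_i^H$ occurs in $M_{J_i}$. Lemma~\ref{lem:allowed-finite-types} then requires, writing $\lambda_i^H=a\varpi+\lambda_D$,
$$
   a\le r-1,\qquad \langle\lambda_D,\theta_D^\vee\rangle\le1 .
$$

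Then I check the list node by node; this step is routine.
\begin{itemize}
\item For $i=1$, $a=r>r-1$, which is excluded by $e_A^r=0$.
\item For $3\le i\le r-1$, $\lambda_D=\eta_{i-1}$ with $2\le i-1\le r-2$. Its $\theta_D^\vee$-pairing is $2$, since $\theta_D=\eta_2$ in the $D_r$ labels and the marks of the interior nodes are $2$. This is excluded by $e_D^2=0$.
\item For $i=\ell-2$, $\lambda_D=\eta_{r-1}+\eta_r$, which pairs to $2$ with $\theta_D^\vee$. It is excluded.
\item For $i=\ell-1$ and $i=\ell$, $\lambda_D=2\eta_r$ or $2\eta_{r-1}$, again with pairing $2$. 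Both are excluded.
\item Only $i=2$ survives, with $\lambda_2^H=(r-1)\varpi+\eta_1$. This is consistent with Lemma~\ref{lem:Premet-shift-D} and the membership theorem.
\end{itemize}

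Two points need care. The first is the marks in the boundary case $r=3$, $D_3\cong A_3$. There $\theta_D^\vee$ pairs to $1$ with every fundamental weight, so the argument must use the $A_3$ reading: $\eta_{r-1}+\eta_r$ and $2\eta_r$ still pair to $2$, and the middle range is empty. The second is justifying that the $\mathfrak g^\natural$-highest weight of the Premet highest-weight vector is exactly $\lambda_i^H$, with no shift from the $\bar\delta$ conventions. This is where I expect the main friction. I would take the identification $\lambda_J^H=\bar\nu+\bar\delta$ as the actual $\mathfrak h^\natural$-eigenvalue on the highest line, as stated in Lemma~\ref{lem:primitive-finiteW-bridge}, and cite the appendix path computation for the values.
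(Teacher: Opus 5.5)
Your proposal is correct and follows essentially the same route as the paper. It makes $M_{J_i}$ a finite-dimensional $B_\ell$-module via Arakawa's quotient equivalence, and reads off its highest weight $\lambda_i^H$ from the regular node-weight list, obtained through Lemma~\ref{lem:primitive-finiteW-bridge} with $\nu_i=y_i\rho-\rho$. It then excludes $i=1$ by $a\le r-1$ and every $i\ge3$ by $\langle(\lambda_i^H)_D,\theta_D^\vee\rangle=2$, using the $A_3$ reading at $r=3$. Your remark that the Premet highest vector generates a $\mathfrak g^\natural$-constituent of highest weight $\lambda_i^H$ spells out a step the paper leaves implicit.
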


\begin{proof}
If $I_\ell^{\mathrm{cand}}\subset J_i$, then the corresponding
finite-dimensional simple $H$-module is a $B_\ell$-module.

For $i=1$, the $A_1$-highest weight in
\eqref{eq:node-weight-1} is $r\varpi$.  This contradicts the
$B_\ell$-bound
$$
   a\le r-1
$$
from Lemma~\ref{lem:allowed-finite-types}.

For $i=2$,
$$
   \lambda_2^H
   =
   (r-1)\varpi+\eta_1
$$
is not excluded by either necessary bound.  Its actual membership is supplied
independently by Theorem~\ref{thm:membership}; no converse to the finite-type
bounds is being used here.

For every $i\ge3$, the $D_r$-component in
\eqref{eq:node-weight-middle}--\eqref{eq:node-weight-spin2} satisfies
$$
   \left\langle
      (\lambda_i^H)_D,\theta_D^\vee
   \right\rangle
   =
   2.
$$
This contradicts $e_D^2=0$, equivalently
$$
   \left\langle
      \lambda_D,\theta_D^\vee
   \right\rangle
   \le1.
$$
The same computation holds for $r=3\cong A_3$; it is spelled out in
Appendix~\ref{app:rank-five-boundary}.  Hence only $i=2$ survives.
\end{proof}

\begin{proposition}[Minimal-orbit rigidity]
\label{prop:minimal-orbit-rigidity}
Under \eqref{eq:minimal-orbit-assumptions},
$$
   J(\mu)=J_{2,\ell},
$$
and
$$
   \mu\in\{\mu_1,\ldots,\mu_\ell\}.
$$
\end{proposition}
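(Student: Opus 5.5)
Under \eqref{eq:minimal-orbit-assumptions}, Proposition~\ref{prop:minimal-regular-central-character} gives $\xi=\mu+\rho\in W\rho$, so $J(\mu)$ has the regular integral central character. Together with $\mathcal V(J(\mu))=\overline{\mathcal O_{\min}}$, this places $J(\mu)$ in the regular minimal primitive fibre. By \cite[Proposition~5.4]{Petukhov2018}, that fibre consists exactly of the ideals $J_1,\dots,J_\ell$. Hence $J(\mu)=J_i$ for some $i$. Since $I_\ell^{\mathrm{cand}}\subset J(\mu)=J_i$, Proposition~\ref{prop:node2-rigidity} forces $i=2$, so $J(\mu)=J_{2,\ell}$. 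This gives the first assertion.

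For the second assertion, write $\mu=w\rho-\rho$ with $w\in W$; such a $w$ is unique because $\rho$ is regular. By the Duflo--Joseph criterion quoted in the proof of Theorem~\ref{thm:primitive-annihilator-compression}, the equality $J(w\rho-\rho)=J(s_2\rho-\rho)$ holds exactly when $w\sim_L s_2$ in the finite Weyl group $W(D_\ell)$. So I must show that the finite left cell of $s_2$ is $\{z_1,\dots,z_\ell\}$. Theorem~\ref{thm:primitive-annihilator-compression} already shows that each $z_i$ lies in this cell. It remains to show the cell has no other elements.

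For this I would use that the left cell of $s_2$ lies in the subregular two-sided cell, namely the elements with a unique reduced expression other than the identity (cite \cite{Xu2019} or Lusztig). In that cell, left cells are determined by the last letter of the unique reduced word. The elements of $W(D_\ell)$ with a unique reduced expression are the paths in the finite Dynkin tree. Those ending in $s_2$ are precisely the $z_i$, since the tree contains a unique path from each node $i$ to node $2$. As a count check, I would note that the number of primitive ideals in the fibre equals the number of left cells in the two-sided cell, consistent with Petukhov's count of $\ell$. This also matches the Robinson--Schensted/Lusztig description, where the left cell has $\dim$ of the reflection representation, that is $\ell$, elements.

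The main obstacle is justifying that the finite left cell of $s_2$ equals exactly the set of unique-reduced-word elements ending in $s_2$. Membership is already known, and the delicate direction is exclusion. I would handle it with a cardinality count rather than a combinatorial cell analysis. The subregular two-sided cell of $W(D_\ell)$ carries the reflection representation, so each of its left cells has exactly $\ell$ elements. The $z_i$ give $\ell$ distinct elements of the cell of $s_2$, so they exhaust it. Then $\mu=z_i\rho-\rho=\mu_i$ for some $1\le i\le\ell$.
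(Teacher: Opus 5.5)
Your proof is correct and follows the paper's argument. The regular-central-character proposition and Petukhov's count place $J(\mu)$ among $J_1,\dots,J_\ell$, node-$2$ rigidity selects $J_{2,\ell}$, and the Duflo--Joseph criterion reduces the second assertion to showing that the finite left cell of $s_2$ is exactly $\{z_1,\dots,z_\ell\}$.

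The only difference is how that last fact is justified. The paper quotes it directly from Xu's Proposition~3.6, which already gives both inclusions. Your counting argument is a valid alternative for the exclusion direction. It does need the family of the subregular two-sided cell to be the singleton consisting of the reflection representation. That holds here because the cell has $\ell^2=(\dim\text{refl})^2$ elements, so every left cell has exactly $\ell$ elements.
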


\begin{proof}
Proposition~\ref{prop:minimal-regular-central-character} places $J(\mu)$
in the regular integral minimal-orbit fibre.  Hence
$$
   J(\mu)=J_i
$$
for some $i$.  Proposition~\ref{prop:node2-rigidity} forces $i=2$.

It remains to determine the highest weights with annihilator $J_{2,\ell}$.
At regular integral central character, equality of highest-weight primitive
ideals is the Kazhdan--Lusztig left-cell relation.  By the same
unique-reduced-word description of the subregular cell used in
Proposition~\ref{prop:subregular-left-cell} (see in particular
\cite[Proposition~3.6]{Xu2019}), the finite left cell containing $s_2$
consists exactly of the unique reduced path words ending at $s_2$:
$$
   z_1,z_2,\ldots,z_\ell.
$$
See \cite[Chapter~12]{Bonnafe2017} and
\cite[Proposition~5.4]{Petukhov2018}.  Therefore
$$
   \mu+\rho=z_i\rho
$$
for a unique $1\le i\le\ell$, and hence
$$
   \mu=z_i\rho-\rho=\mu_i.
$$
\end{proof}

\subsection{Completion of exhaustion}

The two associated-variety cases give the exhaustion statement.

\begin{theorem}[Exhaustion theorem]
\label{thm:exhaustion}
Let $\ell\ge5$.  If
$$
   I_\ell^{\mathrm{cand}}
   \subset
   \operatorname{Ann}_{U(D_\ell)}L_{D_\ell}(\mu)
$$
and
$$
   \mu+\rho
   \in
   W(D_\ell)\rho
   +(2\ell-3)Q(D_\ell),
$$
then
\begin{equation}
   \mu\in\{\mu_0,\mu_1,\ldots,\mu_\ell\}.
\end{equation}
More precisely,
$$
   \mu=0
$$
in the zero-orbit branch, while
$$
   \mu\in\{\mu_1,\ldots,\mu_\ell\}
$$
in the minimal-orbit branch.
\end{theorem}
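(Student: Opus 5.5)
The plan is to assemble the exhaustion theorem from the two branch results already proved, using the associated-variety dichotomy as the case split. Set $J(\mu)=\operatorname{Ann}_{U(D_\ell)}L_{D_\ell}(\mu)$. This is a primitive ideal, and by hypothesis it contains $I_\ell^{\mathrm{cand}}$. Lemma~\ref{lem:filtered-Zhu-containment} then gives
$$
   \mathcal V(J(\mu))=\{0\}
   \quad\text{or}\quad
   \mathcal V(J(\mu))=\overline{\mathcal O_{\min}}.
$$
The hypothesis $\mu+\rho\in W\rho+(2\ell-3)Q(D_\ell)$ is exactly the affine congruence $\mu+\rho\in W\rho+NQ(D_\ell)$, because $N=2\ell-3$. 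Consequently, in the first case all of \eqref{eq:zero-orbit-assumptions} holds, and in the second case all of \eqref{eq:minimal-orbit-assumptions} holds.

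In the zero-orbit branch, Proposition~\ref{prop:zero-orbit-rigidity} gives $\mu=0=\mu_0$. Tracing its inputs, this uses finite-dimensionality of $L_{D_\ell}(\mu)$ and Arakawa's minimal reduction to reach Lemma~\ref{lem:zero-orbit-restricted-type}. The energy gap $s<N$ then follows from the weighted Casimir identity and the bound $C_{\max}$, and the residue analysis modulo $N$ finishes the argument.

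In the minimal-orbit branch, Proposition~\ref{prop:minimal-orbit-rigidity} gives $J(\mu)=J_{2,\ell}$ and $\mu\in\{\mu_1,\dots,\mu_\ell\}$. Its inputs are the regularity of the central character (Proposition~\ref{prop:minimal-regular-central-character}), which combines the quadratic central gap with the type-$D$ norm gap, followed by Petukhov's description of the regular minimal fibre, node-$2$ rigidity, and the left-cell description of the highest weights attached to $J_{2,\ell}$.

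The two branches are mutually exclusive: in the first $L_{D_\ell}(\mu)$ is finite dimensional, while in the second it is not. This yields the ``more precisely'' refinement as stated.

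The only point needing real care is the bookkeeping in the minimal branch. The final step passes from $J(\mu)=J_{2,\ell}$ to $\mu+\rho=z_i\rho$, and this requires $\mu$ to be regular integral. That is exactly what Proposition~\ref{prop:minimal-regular-central-character} supplies, namely $\xi\in W\rho$. So $\mu=w\rho-\rho$ for some $w\in W$, and the Duflo--Joseph left-cell criterion applies to identify $w$ with some $z_i$.

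I would also check that $\ell\ge5$, i.e.\ $r\ge3$, is all that the cited results need. The rank-five boundary $r=3$ is covered by the remarks referring to Appendix~\ref{app:rank-five-boundary}. Otherwise the theorem is a direct assembly of the earlier results, and I expect no new obstacle.
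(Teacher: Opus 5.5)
Your proposal is correct and is essentially the paper's proof: the paper also splits by the associated-variety dichotomy of Lemma~\ref{lem:filtered-Zhu-containment}, and then cites Proposition~\ref{prop:zero-orbit-rigidity} and Proposition~\ref{prop:minimal-orbit-rigidity} for the two branches. Your extra remarks are harmless bookkeeping: the branches are mutually exclusive, and the regular integrality needed in the final left-cell step already follows from $J(\mu)=J_{2,\ell}$, since $-\alpha_2+\rho=s_2\rho$.
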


\begin{proof}
Lemma~\ref{lem:filtered-Zhu-containment} gives
$$
   \mathcal V(J(\mu))
   =
   \{0\}
   \quad\text{or}\quad
   \overline{\mathcal O_{\min}}.
$$
In the first case,
Proposition~\ref{prop:zero-orbit-rigidity} gives $\mu=0=\mu_0$.
In the second case,
Proposition~\ref{prop:minimal-orbit-rigidity} gives
$\mu\in\{\mu_1,\ldots,\mu_\ell\}$.
\end{proof}

\section{The affine left-cell prediction at level -1}
\label{sec:final-left-cell}

By Propositions~\ref{prop:subregular-left-cell} and
\ref{prop:path-dot-action},
$$
   \mathbf c^L(s_0)=\{w_0,w_1,\ldots,w_\ell\},
   \qquad
   w_i\circ(-\Lambda_0)=-\Lambda_0+\mu_i.
$$
Here $m=k+h^\vee=N=h^\vee-1$.  In type~$D_\ell$,
\cite[Section~2.4.7]{ShanYanZhao2025} gives
$\check{\mathbb O}(m)=\check{\mathbb O}_{\mathrm{sreg}}$, a distinguished
orbit.  Since $A_\ell$ is dominant with stabilizer $\langle s_0\rangle$,
we have $\xi_m=A_\ell$ and $w_m=s_0$.  Thus the simple/cell basis prediction in
\cite[Conjecture~5.2.1]{ShanYanZhao2026} specializes to the classification
below.  The Grothendieck-group statement will be proved by a signed-character
realization that is independent of the functorial $W_{\mathrm{aff}}$-action
whose construction is announced in \cite[Section~5.2]{ShanYanZhao2026}.

\subsection{Classification and passage to the simple quotient}

The two inputs used here are logically independent.  The membership theorem
shows that the predicted path modules descend.  The exhaustion theorem, by
contrast, starts only from the necessary condition
$I_\ell^{\mathrm{cand}}\subset J(\mu)$ for a hypothetical simple object and
does not use Theorem~\ref{thm:membership}; nor does either argument use the
later identification $Q_\ell\cong L_{-1}(D_\ell)$.  Thus the classification
below introduces no circular dependence.

\begin{theorem}[Candidate quotient classification]
\label{thm:classification-Q}
The simple objects in the affine vacuum block that factor through $Q_\ell$
are exactly
\begin{equation}
   \left\{
      L(-\Lambda_0+\mu_i)\;\middle|\;0\le i\le\ell
   \right\}.
\end{equation}
\end{theorem}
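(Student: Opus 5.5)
The plan is to prove the two inclusions separately, using the membership corollary and the exhaustion theorem. First I fix what an object of the affine vacuum block is. It is an irreducible highest-weight module $L(\lambda)$ whose highest weight is linked to $-\Lambda_0$ under the dot action of $\widehat W$. Since the shifted level is $N=2\ell-3$, the linkage condition at level $-1$ reads
\[
   \lambda=-\Lambda_0+\mu\quad\text{with}\quad \mu+\rho\in W\rho+NQ(D_\ell).
\]
This is because $\widehat W=W\ltimes NQ^\vee$ acts on the level-$N$ slice of $\widehat\rho$-shifted weights, and $Q^\vee=Q$ in the simply-laced normalization. The imaginary part is fixed by the vacuum block, so only the finite part $\mu$ varies.

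For the inclusion of the listed modules, I use Proposition~\ref{prop:path-dot-action}. It gives $-\Lambda_0+\mu_i=w_i\circ(-\Lambda_0)$, so every $L(-\Lambda_0+\mu_i)$ lies in the vacuum block. The case $i=0$ is the vacuum module $Q_\ell$ itself, or more precisely its simple quotient. This module factors through $Q_\ell$ because the trivial module $L_{\mathfrak g}(0)$ is an $A(Q_\ell)$-module: the ideal $I_\ell^{\mathrm{cand}}$ is proper, since $Q_\ell\neq0$ (its DS reduction is nonzero). We then apply Corollary~\ref{cor:single-membership-test} in its $\mu=0$ form, i.e.\ the Zhu correspondence. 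For $1\le i\le\ell$, the Simultaneous descent corollary, which is derived from Theorem~\ref{thm:membership}, gives the factorization directly.

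For the reverse inclusion, let $L(-\Lambda_0+\mu)$ be a simple object of the vacuum block factoring through $Q_\ell$. I take its degree-zero top. Since the module is positive-energy and simple, the top is the simple $A(Q_\ell)$-module $L_{\mathfrak g}(\mu)$, and hence
\[
   I_\ell^{\mathrm{cand}}\subset\operatorname{Ann}L_{\mathfrak g}(\mu).
\]
Together with the congruence $\mu+\rho\in W\rho+NQ(D_\ell)$, these are exactly the hypotheses of Theorem~\ref{thm:exhaustion}. That theorem gives $\mu\in\{\mu_0,\dots,\mu_\ell\}$.

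The only step needing care, and the main obstacle, is the reduction of vacuum-block linkage to the finite congruence. One must check that the dot orbit $\widehat W\circ(-\Lambda_0)$, restricted to level $-1$ weights of the relevant form, produces precisely $\mu+\rho\in W\rho+NQ$ and nothing larger. In particular, one should be sure the imaginary root $\delta$-component does not enlarge the set, since weights are considered modulo $\mathbb C\delta$ or are fixed by the energy grading. If the block is defined by linkage rather than by orbit, I would note that at the noncritical level the Kac--Kazhdan linkage classes coincide with the integral Weyl group orbits. Here the integral Weyl group is all of $\widehat W$, because $-\Lambda_0$ is integral.
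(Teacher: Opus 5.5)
Your proposal is correct and follows the paper's proof essentially verbatim: the path modules descend via the simultaneous-descent corollary to Theorem~\ref{thm:membership}, and conversely the degree-zero top of any simple object in the block gives $I_\ell^{\mathrm{cand}}\subset\operatorname{Ann}L_{\mathfrak g}(\mu)$. The translation part of $\widehat W=W\ltimes Q^\vee$ then gives $\mu+\rho\in W\rho+NQ(D_\ell)$, so Theorem~\ref{thm:exhaustion} applies. One small slip: properness of $I_\ell^{\mathrm{cand}}$ by itself does not place it inside the augmentation ideal (a proper ideal need not kill the trivial module), but your other remark already settles the vacuum case, since the simple quotient $L(-\Lambda_0)$ of the nonzero vertex algebra $Q_\ell$ is trivially a $Q_\ell$-module.
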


\begin{proof}
The vacuum is present.  For $i\ge1$, Theorems
\ref{thm:membership} and \ref{thm:primitive-annihilator-compression} give
$$
   I_\ell^{\mathrm{cand}}
   \subset J_{2,\ell}
   =\operatorname{Ann}_{U(D_\ell)}L_{D_\ell}(\mu_i),
$$
so all path modules descend.  Conversely, by the classical affine
linkage/block parametrization \cite{Kac1990}
\cite[Section~1.2]{ShanYanZhao2026}, any simple object in the ambient block of
$-\Lambda_0$ is $L(y\circ(-\Lambda_0))$ for some $y\in\widehat W$,
modulo the usual stabilizer redundancy.

If such a simple factors through $Q_\ell$, its degree-zero top is
annihilated by $I_\ell^{\mathrm{cand}}$.  Write its finite highest-weight
part as $\mu$.  Since $\widehat W=W\ltimes Q^\vee$ and a
translation by $q$ adds $Nq$ to the finite part of a level-$N$
weight \cite[Chapter~6]{Kac1990}, the shifted vacuum
$-\Lambda_0+\widehat\rho=N\Lambda_0+\rho$ gives the necessary
congruence
$\mu+\rho\in W\rho+(2\ell-3)Q(D_\ell)$, because type $D$ is
simply laced and $Q^\vee=Q$.  Theorem~\ref{thm:exhaustion} therefore
forces $\mu\in\{\mu_0,\ldots,\mu_\ell\}$.
\end{proof}

The type-$D_\ell$, level-$-1$ maximal-ideal theorem of
\cite{Jin2026} states
\begin{equation}
\label{eq:Q-is-simple-final}
   Q_\ell\cong L_{-1}(D_\ell).
\end{equation}
Simplicity of the candidate quotient is used only here.

\begin{theorem}[Simple objects]
\label{thm:final-simple-classification}
For every $\ell\ge5$,
\begin{equation}
   \operatorname{Irr}\mathcal O_{-\Lambda_0}
   \!\left(L_{-1}(D_\ell)\right)
   =
   \left\{
      L\!\left(w_i\circ(-\Lambda_0)\right)
      \;\middle|\;0\le i\le\ell
   \right\}.
\end{equation}
In particular, the block has exactly $\ell+1$ simple objects.
\end{theorem}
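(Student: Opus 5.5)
The plan is to transport Theorem~\ref{thm:classification-Q} along the identification \eqref{eq:Q-is-simple-final}, and then to rewrite the resulting highest weights with Proposition~\ref{prop:path-dot-action}. First I fix what $\operatorname{Irr}\mathcal O_{-\Lambda_0}(L_{-1}(D_\ell))$ means. Following the Shan--Yan--Zhao convention, it is the set of simple objects of the ambient affine category-$\mathcal O$ block of $-\Lambda_0$ at level $-1$ on which the action of $V^{-1}(D_\ell)$ factors through the simple quotient $L_{-1}(D_\ell)$. These are highest-weight modules $L(y\circ(-\Lambda_0))$, and each is determined by its affine highest weight. Because $L_{-1}(D_\ell)$ is a quotient of $V^{-1}(D_\ell)$, a simple object of the block is an $L_{-1}(D_\ell)$-module exactly when the ideal $\ker(V^{-1}\to L_{-1})$ annihilates it. By \eqref{eq:Q-is-simple-final} this kernel equals $\langle s_A,s_D\rangle$. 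So the simple objects of the vacuum block of $L_{-1}(D_\ell)$ are precisely the simple objects of the ambient block that factor through $Q_\ell$.

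Second, Theorem~\ref{thm:classification-Q} lists these objects: they are exactly $L(-\Lambda_0+\mu_i)$ for $0\le i\le\ell$. Proposition~\ref{prop:path-dot-action} gives $-\Lambda_0+\mu_i=w_i\circ(-\Lambda_0)$. Substituting this into the list gives the displayed equality of sets.

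Third, to show there are exactly $\ell+1$ simple objects, I must check that the weights $\mu_0,\dots,\mu_\ell$ are pairwise distinct. This follows from the explicit orthogonal coordinates listed after Proposition~\ref{prop:path-dot-action}. $\mu_0=0$ differs from the others, since each $z_i$ is a nontrivial element of $W$ and so $z_i\rho\ne\rho$ by regularity of $\rho$. For $i\ge1$, the map $z\mapsto z\rho$ is injective on $W$, and the path words $z_1,\dots,z_\ell$ are distinct elements of $W$ because their reduced expressions begin with different simple reflections. Hence the $\mu_i$ are distinct, and since distinct highest weights give non-isomorphic simple modules, the count is $\ell+1$.

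I do not expect a serious obstacle, since all the substantive work sits in Theorems~\ref{thm:membership} and \ref{thm:exhaustion} and in \eqref{eq:Q-is-simple-final}. The only point needing care is the first step. The block category of $L_{-1}(D_\ell)$ has to be the full subcategory of the ambient block cut out by the vanishing of the maximal ideal, and not, for example, a category of modules with some additional grading restriction. If Shan--Yan--Zhao's definition instead imposes positive-energy conditions, I would note that highest-weight modules satisfy them automatically, so the set of simple objects is unchanged.
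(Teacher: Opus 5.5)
Your proposal is correct and matches the paper's proof, which simply combines Theorem~\ref{thm:classification-Q}, the identification \eqref{eq:Q-is-simple-final}, and Proposition~\ref{prop:path-dot-action}. Your explicit check that $\mu_0,\ldots,\mu_\ell$ are pairwise distinct is needed for the count $\ell+1$, which the paper leaves implicit; it is immediate from the listed orthogonal coordinates, or from the distinct supports of the $z_i$.
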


\begin{proof}
Combine Theorem~\ref{thm:classification-Q}, \eqref{eq:Q-is-simple-final},
and Proposition~\ref{prop:path-dot-action}.
\end{proof}

\subsection{Grothendieck group}

The passage from a classification of simple objects to the ordinary
Grothendieck group requires a d\'evissage statement.  As explained in the
introduction, Jain \cite[Sections~2--5]{Jain2026} isolates this issue and proves
finite length for a grading-restricted vacuum block.  We now establish the
corresponding statement for the original affine category-$\mathcal O$ block
used by Shan--Yan--Zhao.  The extra input is the canonical noncritical Sugawara
lift of affine category $\mathcal O$ to the affine Kac--Moody category
$\mathcal O$.

Put
$$
   \mathcal A_\ell
   :=\mathcal O_{-\Lambda_0}\!\left(L_{-1}(D_\ell)\right),
   \qquad
   \lambda_i:=w_i\circ(-\Lambda_0)=-\Lambda_0+\mu_i,
$$
and let
$$
   \mathcal B_\ell
   :=\mathcal O_{-\Lambda_0}(\mathfrak g_{\mathrm{aff}})
$$
be the corresponding ambient affine Lie-algebra block.  The category
$\mathcal A_\ell$ is an abelian full subcategory of $\mathcal B_\ell$:
kernels and cokernels remain in the ambient linkage block, and the condition
that the defining ideal of $V^{-1}(D_\ell)\twoheadrightarrow L_{-1}(D_\ell)$
act trivially is inherited by submodules and quotients.  Hence the natural
inclusion
$$
   j_\ell:\mathcal A_\ell\hookrightarrow\mathcal B_\ell
$$
is exact and induces a homomorphism
$j_{\ell,*}:K_0\mathcal A_\ell\to K_0\mathcal B_\ell$.

Let
$$
   \widetilde{\mathfrak h}
   :=\mathfrak h\oplus\mathbb C\mathbf K\oplus\mathbb C d
$$
be the Cartan of the affine Kac--Moody algebra, with
$[d,xt^n]=nxt^n$.  Since
$$
   k+h^\vee=-1+(2\ell-2)=2\ell-3\ne0,
$$
the level is noncritical.  Shan--Yan--Zhao define the category used here as
the affine Lie-algebra category $\mathcal O$ attached to
$\mathfrak h_{\mathrm{aff}}=\mathfrak h\oplus\mathbb C\mathbf K$
and the Iwahori, and note that every object of this category is smooth; the
subcategory $\mathcal O(L_k)$ is obtained by intersecting it with the
category of $L_k$-modules
\cite[Section~1.2]{ShanYanZhao2026}.  Hence
\cite[Remark~3.5.3]{CampbellDhillonRaskin2021} applies: at a noncritical
level this affine category $\mathcal O$ embeds canonically as a Serre
subcategory of category $\mathcal O$ for the extended affine
Kac--Moody algebra, with the degree operator $d$ acting as the
semisimple part of $-L_0$.  We use this canonical lift of the original
Shan--Yan--Zhao block; no grading-restricted replacement category is
introduced.
For each $i$, let $\widetilde\lambda_i\in\widetilde{\mathfrak h}^*$
be the lift of $\lambda_i$ with $\widetilde\lambda_i(d)=0$.

\begin{proposition}[Finite length of the vacuum block]
\label{prop:finite-length-vacuum-block}
For every $\ell\ge5$, the category $\mathcal A_\ell$ is a length
category.  More precisely, every $M\in\mathcal A_\ell$ satisfies
\begin{equation}
\label{eq:finite-length-bound}
   \operatorname{length}(M)
   \le
   \sum_{i=0}^{\ell}\dim M_{\widetilde\lambda_i},
\end{equation}
where the weight spaces are taken after the canonical noncritical lift above.
\end{proposition}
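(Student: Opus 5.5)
The strategy is a detector argument: each simple object of $\mathcal A_\ell$ contributes a nonzero vector to one of the finitely many finite-dimensional weight spaces $M_{\widetilde\lambda_i}$, and these spaces are additive in $M$. Concretely, define
$$
   \delta(M):=\sum_{i=0}^{\ell}\dim M_{\widetilde\lambda_i}.
$$
The proof then has three steps.

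First, we show $\delta(M)<\infty$ for every $M\in\mathcal A_\ell$. After the canonical noncritical lift of \cite[Remark~3.5.3]{CampbellDhillonRaskin2021}, $M$ lies in Kac--Moody category $\mathcal O$ for the extended algebra. There weight spaces for $\widetilde{\mathfrak h}$ are finite dimensional. The reason is that objects are finitely generated, $\widetilde{\mathfrak h}$-semisimple and locally $\widetilde{\mathfrak n}^+$-finite, so weights lie in finitely many cones $\widetilde\lambda-\widehat Q_+$, each of which has finite-dimensional weight multiplicities. One point needs checking here: $d$ acts through the semisimple part of $-L_0$, so the lift may split a generalized $L_0$-eigenspace. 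This is harmless, because we only use honest $\widetilde{\mathfrak h}$-weight spaces of the lifted module.

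Second, we show that $\delta$ is additive on short exact sequences in $\mathcal A_\ell$. The lift is exact, since it embeds $\mathcal O$ as a Serre subcategory, and taking a weight space is exact on $\widetilde{\mathfrak h}$-semisimple modules.

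Third, we show $\delta(L)\ge1$ for every simple $L\in\mathcal A_\ell$. By Theorem~\ref{thm:final-simple-classification}, $L\cong L(\lambda_i)$ for some $i$. Its lift is a simple highest-weight module of highest weight $\widetilde\lambda_i+c\delta$ for some $c\in\mathbb C$. I expect this normalization to be the main obstacle: we must confirm that the canonical lift places the highest weight exactly at $d$-eigenvalue $0$, so that the highest weight is $\widetilde\lambda_i$ itself and not a shift. I would argue this in two parts.

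Part one: $L_0$ acts on the top of $L(\lambda_i)$ by the Sugawara conformal weight
$$
   h_i=\frac{(\mu_i,\mu_i+2\rho)}{2N}.
$$
The central-character computation gives $(\mu_i,\mu_i+2\rho)=\|z_i\rho\|^2-\|\rho\|^2=0$, so $h_i=0$ for every $i$. The lift therefore puts the highest line in $d$-degree $0$, and $L_{\widetilde\lambda_i}$ is the nonzero highest-weight line.

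Part two: if the lift convention instead allows an arbitrary integral shift on each indecomposable summand, I would replace $\widetilde\lambda_i$ by the honest lifted highest weight. Since all $h_i$ coincide (they are all $0$), this still yields a fixed finite list of detector weights.

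With the three steps in hand, the length bound follows by induction on $\delta(M)$. If $M\ne0$, it has a simple subquotient; we need $M$ to admit a simple submodule or quotient. This holds because $M$ is finitely generated in category $\mathcal O$, so a highest weight of maximal real part gives a highest-weight vector, hence a nonzero submodule with a simple quotient. Consider any strictly increasing chain $0=M_0\subsetneq M_1\subsetneq\cdots\subsetneq M_n=M$. Additivity gives $\delta(M)=\sum_j\delta(M_j/M_{j-1})$. Each nonzero subquotient has a simple subquotient, so by the third step and additivity its $\delta$ is at least $1$. Hence $n\le\delta(M)<\infty$. Every chain is therefore bounded, so $M$ is Artinian and Noetherian, has finite length, and satisfies \eqref{eq:finite-length-bound}.
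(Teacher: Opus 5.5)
Your proposal is correct and follows essentially the same route as the paper. Both use the exact detector functors $M\mapsto M_{\widetilde\lambda_i}$ after the Campbell--Dhillon--Raskin lift, nonvanishing on each $L(\lambda_i)$ because the Sugawara weight $(\mu_i,\mu_i+2\rho)/2N$ is zero, the classification theorem, and the chain bound $n\le\sum_i\dim M_{\widetilde\lambda_i}$.

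The differences are minor. You obtain a simple subquotient from a vector of maximal weight, where the paper uses a maximal proper submodule of a cyclic submodule; note that this weight must be maximal for the $\widehat Q_+$-order, not merely of maximal real $d$-part. Your ``Part two'' hedge is unnecessary, since $d$ acting as the semisimple part of $-L_0$ already pins the highest line at degree $0$.
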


\begin{proof}
Under the Serre embedding of
\cite[Remark~3.5.3]{CampbellDhillonRaskin2021}, every object of
$\mathcal A_\ell$ is an object of affine Kac--Moody category
$\mathcal O$.  By the standard definition of this category
\cite[Section~9.1]{Kac1990}, it is a direct sum of finite-dimensional
$\widetilde{\mathfrak h}$-weight spaces.  The inclusion $j_\ell:\mathcal A_\ell\hookrightarrow\mathcal B_\ell$
was shown above to be exact, and the Campbell--Dhillon--Raskin embedding of
$\mathcal B_\ell$ into the extended category is exact (indeed Serre).
Hence short exact sequences in $\mathcal A_\ell$ remain short exact after
the lift, and taking a fixed $\widetilde{\mathfrak h}$-weight space is exact.
Hence for each $i$ the functor
$$
   F_i:\mathcal A_\ell\longrightarrow\operatorname{Vect}^{\mathrm{fd}}_{\mathbb C},
   \qquad
   F_i(M):=M_{\widetilde\lambda_i},
$$
is exact.

We next check that these finitely many functors detect every simple object.
For $i\ge1$, the Sugawara conformal weight of the highest-weight vector of
$L(\lambda_i)$ is
$$
   \frac{(\mu_i,\mu_i+2\rho)}{2(-1+h^\vee)}.
$$
Since $\mu_i=z_i\rho-\rho$ and $z_i$ preserves the invariant form,
$$
   (\mu_i,\mu_i+2\rho)
   =(z_i\rho-\rho,z_i\rho+\rho)
   =\lVert z_i\rho\rVert^2-\lVert\rho\rVert^2
   =0.
$$
The same statement is the vacuum normalization for $i=0$.  Under the
canonical Serre embedding, this fixes the degree eigenvalue of the
highest-weight line to be $0$; no independent scalar shift of $d$ is being
chosen.  Thus the highest-weight line of $L(\lambda_i)$ lies in
$F_i(L(\lambda_i))$, so
$F_i(L(\lambda_i))\ne0$.  By
Theorem~\ref{thm:final-simple-classification}, these are all the simple
objects of $\mathcal A_\ell$.

Finally, every nonzero object of $\mathcal A_\ell$ has a simple
subquotient.  Indeed, if $0\ne v\in M$, the cyclic affine submodule generated
by $v$ is a vertex-algebra submodule of $M$, since the affine vertex algebra
is generated by its current fields.  It has a maximal proper submodule: the
union of a chain of proper submodules cannot contain the cyclic generator.
Its corresponding simple quotient remains in $\mathcal A_\ell$.  The ambient
linkage block is a Serre subcategory, while the condition of factoring through
$L_{-1}(D_\ell)$ means precisely that the defining ideal of the quotient
vertex algebra acts trivially; this condition is inherited by submodules and
quotients.

Now consider an arbitrary strict finite filtration
$$
   0=M_0\subsetneq M_1\subsetneq\cdots\subsetneq M_n=M
$$
and put $Q_j=M_j/M_{j-1}$.  Each $Q_j$ has a simple subquotient
$S_j=A_j/B_j$, and some $F_i$ detects $S_j$.  Exactness gives
$F_i(A_j)/F_i(B_j)\cong F_i(S_j)\ne0$, while
$A_j\hookrightarrow Q_j$ gives $F_i(A_j)\hookrightarrow F_i(Q_j)$.
Thus $F_i(Q_j)\ne0$, and therefore
$$
   1\le\sum_{i=0}^{\ell}\dim F_i(Q_j).
$$
Summing over $j$ yields
$$
   n\le
   \sum_{i=0}^{\ell}\dim F_i(M),
$$
which is \eqref{eq:finite-length-bound}.  Hence all strict ascending and
descending chains have uniformly bounded length; $M$ is both noetherian
and artinian and therefore has finite length.
\end{proof}

\begin{corollary}[D\'evissage]
\label{cor:K0-devissage}
For every $\ell\ge5$,
\begin{equation}
\label{eq:K0-simple-basis}
   K_0\mathcal A_\ell
   =
   \bigoplus_{i=0}^{\ell}
   \mathbb Z\,[L(\lambda_i)].
\end{equation}
\end{corollary}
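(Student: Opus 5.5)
The plan is the standard dévissage for a length category with finitely many simple objects. I will use Proposition~\ref{prop:finite-length-vacuum-block} for finite length and Theorem~\ref{thm:final-simple-classification} for the list of simples.

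\textbf{Spanning.} Every $M\in\mathcal A_\ell$ has a composition series by Proposition~\ref{prop:finite-length-vacuum-block}. All its subquotients lie in $\mathcal A_\ell$, since the category is abelian and closed under subquotients, as noted before that proposition. Each composition factor is therefore isomorphic to some $L(\lambda_i)$ by Theorem~\ref{thm:final-simple-classification}. Additivity of $[\,\cdot\,]$ on short exact sequences then gives
$$[M]=\sum_i[M:L(\lambda_i)]\,[L(\lambda_i)],$$
so the classes $[L(\lambda_i)]$ generate $K_0\mathcal A_\ell$.

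\textbf{Linear independence.} By Jordan--H\"older, which holds because every object has finite length, the multiplicity $[M:L(\lambda_i)]$ is well defined. It is also additive on short exact sequences: concatenate a composition series of the subobject with the preimage of a composition series of the quotient. Each multiplicity therefore descends to a homomorphism
$$m_i:K_0\mathcal A_\ell\to\mathbb Z,\qquad m_i([L(\lambda_j)])=\delta_{ij}.$$
For this to hold we need the $L(\lambda_i)$ to be pairwise non-isomorphic. This is true because the $\mu_i$ are pairwise distinct, as the explicit coordinates after Proposition~\ref{prop:path-dot-action} show.

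\textbf{Alternative detector argument.} One could instead use the exact functors $F_i$ from the proof of Proposition~\ref{prop:finite-length-vacuum-block}. The maps $[M]\mapsto\dim F_i(M)$ are additive, so they define homomorphisms $K_0\mathcal A_\ell\to\mathbb Z$. This route, however, would require showing that the matrix $\bigl(\dim F_i(L(\lambda_j))\bigr)_{i,j}$ is invertible. That needs weight-space information about the $L(\lambda_j)$ at the weights $\widetilde\lambda_i$, which are not obviously zero off the diagonal. I will therefore avoid this route and use the Jordan--H\"older multiplicities.

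\textbf{Where the difficulty lies.} The only nontrivial input is finite length, and that is already established. The remaining care is bookkeeping: distinctness of the $\ell+1$ simples, and well-definedness of the multiplicities, which holds because $K_0$ is generated by classes of finite-length objects. Together these show that the $[L(\lambda_i)]$ form a $\mathbb Z$-basis, which is \eqref{eq:K0-simple-basis}.
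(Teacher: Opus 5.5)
Your proposal is correct and follows the paper's own argument: the paper's proof simply invokes the usual d\'evissage theorem for a length category, combining Proposition~\ref{prop:finite-length-vacuum-block} with Theorem~\ref{thm:final-simple-classification}. You have written out the spanning step, the Jordan--H\"older independence step, and the pairwise non-isomorphism of the $L(\lambda_i)$, which the paper leaves implicit. (The detector route you set aside would also have worked: $\dim F_i(L(\lambda_j))$ is nonzero only when $\mu_j-\mu_i$ is a sum of positive finite roots, and it equals $1$ for $i=j$, so after ordering the $\mu_i$ compatibly with dominance the matrix is triangular with unit diagonal; but this is not needed.)
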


\begin{proof}
This is the usual d\'evissage theorem for a length category, using
Proposition~\ref{prop:finite-length-vacuum-block} and
Theorem~\ref{thm:final-simple-classification}.
\end{proof}

We next separate three statements that should not be conflated: the
injectivity of the natural map to the ambient Grothendieck group, the
basis-preserving identification with the dual left-cell module, and the
functorial origin of a possible affine-Weyl-group action.  The first two will
be proved directly.  The third is not needed here.

Write $v=q^{1/2}$ and specialize by $v\mapsto1$.  Set
\begin{equation}
\label{eq:cell-module-definition}
   \mathcal C_\ell
   :=
   \left.
      \mathcal H^\vee_{\mathrm{aff},\mathbf c^L(s_0)}
   \right|_{v=1}.
\end{equation}
By Proposition~\ref{prop:subregular-left-cell} and the canonical cell
inclusion \cite[Equation~(2.1.2)]{ShanYanZhao2026},
\begin{equation}
\label{eq:cell-module-basis}
   \mathcal C_\ell
   =\bigoplus_{i=0}^{\ell}\mathbb ZD_{w_i}.
\end{equation}

\begin{proposition}[Ambient injectivity by characters]
\label{prop:ambient-K0-injectivity}
For every $\ell\ge5$, the homomorphism
$$
   j_{\ell,*}:K_0\mathcal A_\ell\longrightarrow K_0\mathcal B_\ell
$$
induced by the exact inclusion is injective.
\end{proposition}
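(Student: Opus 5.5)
By Corollary~\ref{cor:K0-devissage}, $K_0\mathcal A_\ell$ is free on the classes $[L(\lambda_i)]$, $0\le i\le\ell$. Injectivity of $j_{\ell,*}$ therefore amounts to showing that these $\ell+1$ classes are $\mathbb Z$-linearly independent in $K_0\mathcal B_\ell$. I will detect this with a character map defined on the ambient group. Linear independence in a target quotient is weaker than the statement needed, so I will build a homomorphism \emph{out of} $K_0\mathcal B_\ell$ and show that the images of the $[L(\lambda_i)]$ are independent there.

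Using the canonical noncritical lift of \cite[Remark~3.5.3]{CampbellDhillonRaskin2021} already used in Proposition~\ref{prop:finite-length-vacuum-block}, every object of $\mathcal B_\ell$ has finite-dimensional $\widetilde{\mathfrak h}$-weight spaces. Hence the formal character
$$
   \operatorname{ch}M=\sum_{\gamma}\dim M_\gamma\,e^\gamma
$$
lies in the additive group of formal $\mathbb Z$-valued functions on $\widetilde{\mathfrak h}^*$. The lift is exact and taking a weight space is exact, so $M\mapsto\operatorname{ch}M$ is additive on short exact sequences. By the universal property of $K_0$ it factors through a homomorphism $\operatorname{ch}:K_0\mathcal B_\ell\to\mathbb Z^{\widetilde{\mathfrak h}^*}$. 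Composing with $j_{\ell,*}$, it suffices to show that the characters $\operatorname{ch}L(\lambda_i)$ are linearly independent.

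Each $L(\lambda_i)$ is a simple highest-weight module whose weights all lie in $\widetilde\lambda_i-\widehat Q_+$, and the weight $\widetilde\lambda_i$ itself occurs with multiplicity one. The finite parts $\mu_0,\ldots,\mu_\ell$ are pairwise distinct by the explicit list after Proposition~\ref{prop:path-dot-action}. Moreover all the $\widetilde\lambda_i$ have $d$-eigenvalue $0$ and level $-1$, so they are pairwise distinct weights.

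Suppose $\sum_i c_i\operatorname{ch}L(\lambda_i)=0$ with some $c_i\neq0$. Among the indices with $c_i\ne0$, choose one whose $\widetilde\lambda_i$ is maximal for the dominance order $\le$ defined by $\widehat Q_+$. The coefficient of $e^{\widetilde\lambda_i}$ in any other $\operatorname{ch}L(\lambda_j)$ with $c_j\ne0$ is zero, because $\widetilde\lambda_i\le\widetilde\lambda_j$ together with $\widetilde\lambda_i\ne\widetilde\lambda_j$ would contradict the maximality of $\widetilde\lambda_i$. Therefore the coefficient of $e^{\widetilde\lambda_i}$ in the sum is $c_i$, which must vanish, a contradiction. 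This triangularity argument proves linear independence and hence injectivity of $j_{\ell,*}$.

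The main point needing care is the well-definedness of $\operatorname{ch}$ on $K_0\mathcal B_\ell$ itself. Objects of $\mathcal B_\ell$ need not have finite length, so additivity must come from exactness of weight spaces after the lift, and not from a composition-series expansion. The Serre embedding supplies this exactness directly. Apart from that, the independence step uses only the pairwise distinctness of the $\mu_i$ and the standard highest-weight triangularity, and it invokes no fullness or $W_{\mathrm{aff}}$-linearity.
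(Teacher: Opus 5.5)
Your proposal is correct and follows essentially the same route as the paper: dévissage gives the basis $[L(\lambda_i)]$, the Campbell--Dhillon--Raskin noncritical lift provides an additive formal character on $K_0\mathcal B_\ell$, and highest-weight triangularity with a maximal $\widetilde\lambda_i$ gives the linear independence. Two points the paper leaves implicit are stated explicitly in your version: the factorization of $\operatorname{ch}$ through $K_0\mathcal B_\ell$ via exactness of weight spaces rather than composition series, and the pairwise distinctness of the $\widetilde\lambda_i$ coming from the distinct $\mu_i$.
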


\begin{proof}
By Corollary~\ref{cor:K0-devissage}, every element of
$K_0\mathcal A_\ell$ has a unique expression
$$
   \sum_{i=0}^{\ell}a_i[L(\lambda_i)],\qquad a_i\in\mathbb Z.
$$
Suppose its image under $j_{\ell,*}$ is zero.  Compose the relation with
the canonical noncritical Serre embedding of
\cite[Remark~3.5.3]{CampbellDhillonRaskin2021} into the extended affine
Kac--Moody category $\mathcal O$ used in the proof of
Proposition~\ref{prop:finite-length-vacuum-block}.  There the Cartan is
$\widetilde{\mathfrak h}=\mathfrak h\oplus\mathbb C\mathbf K\oplus\mathbb C d$,
all weight spaces are finite dimensional, and formal character is additive
on short exact sequences.  Writing $\widetilde L_i$ for the canonical lift of $L(\lambda_i)$, its
highest weight is $\widetilde\lambda_i$ with
$\widetilde\lambda_i(d)=0$.  We obtain
\begin{equation}
\label{eq:ambient-character-relation}
   \sum_{i=0}^{\ell}a_i\operatorname{ch}\widetilde L_i=0.
\end{equation}
The characters of these distinct extended highest-weight modules are
linearly independent.  Indeed, among the finitely many
$\widetilde\lambda_i$ with $a_i\ne0$, choose one that is maximal for the
usual affine positive-root order.  The monomial
$e^{\widetilde\lambda_i}$ occurs in
$\operatorname{ch}\widetilde L_i$ with coefficient one.  If it occurred
in $\operatorname{ch}\widetilde L_j$, then
$\widetilde\lambda_j-\widetilde\lambda_i$ would be a nonnegative integral
combination of positive affine roots, contradicting maximality unless
$i=j$.  The coefficient of $e^{\widetilde\lambda_i}$ in
\eqref{eq:ambient-character-relation} is therefore $a_i$, a contradiction.
Removing one maximal weight at a time gives $a_i=0$ for all $i$.
Thus $j_{\ell,*}$ is injective.
\end{proof}

\begin{remark}[Why fullness alone is not being used]
\label{rem:ambient-injectivity-not-formal}
The preceding injectivity is not inferred merely from the fact that
$\mathcal A_\ell$ is a full exact subcategory of $\mathcal B_\ell$.
Such an inference is false for Grothendieck groups in general, as emphasized
in \cite[Sections~2--3]{Jain2026}.  The proof uses finite-length d\'evissage
inside $\mathcal A_\ell$ and the highest-weight triangularity of formal
characters.  In particular it does not use the ambient
$W_{\mathrm{aff}}$-linearity assertion in
\cite[Section~5.2]{ShanYanZhao2026}.
\end{remark}

\begin{theorem}[Basis-preserving dual-cell realization]
\label{thm:final-SYZ-K0}
For every $\ell\ge5$, there is a unique basis-preserving isomorphism of
abelian groups
\begin{equation}
\label{eq:final-SYZ-K0}
   \Phi_\ell:
   K_0\mathcal O_{-\Lambda_0}
   \!\left(L_{-1}(D_\ell)\right)
   \xrightarrow{\ \sim\ }
   \mathcal C_\ell,
   \qquad
   [L(w_i\circ(-\Lambda_0))]\longmapsto D_{w_i}.
\end{equation}
Moreover this identification has a concrete signed-character realization
that does not use an ambient transported action.
\end{theorem}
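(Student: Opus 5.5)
The plan has two parts: a formal basis comparison, followed by a realization of that comparison inside the completed singular-orbit module.

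\emph{Abstract isomorphism.} By Corollary~\ref{cor:K0-devissage}, $K_0\mathcal A_\ell$ is free abelian on the $\ell+1$ classes $[L(\lambda_i)]$. By \eqref{eq:cell-module-basis}, $\mathcal C_\ell$ is free abelian on $D_{w_0},\dots,D_{w_\ell}$. Proposition~\ref{prop:path-dot-action} and Proposition~\ref{prop:subregular-left-cell} show that $i\mapsto w_i$ is a bijection between the indexing sets, since the $\lambda_i$ are pairwise distinct. Sending basis to basis therefore defines an isomorphism $\Phi_\ell$. Uniqueness is immediate, because a homomorphism of free abelian groups is determined by its values on a basis, and the statement prescribes those values.

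\emph{Signed-character realization.} Let $R_{\widehat{\mathfrak g}}$ be the affine Weyl denominator. I would define
\[
   \chi\bigl([M]\bigr):=R_{\widehat{\mathfrak g}}\operatorname{ch}\widetilde M
\]
through the canonical noncritical lift used in Proposition~\ref{prop:finite-length-vacuum-block}, and then set $\chi^{\pm}([L(\lambda_i)]):=\varepsilon(w_i)\chi([L(\lambda_i)])$. Additivity of formal characters on short exact sequences makes $\chi$ well defined on $K_0\mathcal A_\ell$. The proof of Proposition~\ref{prop:ambient-K0-injectivity} gives linear independence of the characters, so $\chi^{\pm}$ is injective.

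To compare the image with the cell side, I would expand each simple character using inverse Kazhdan--Lusztig polynomials:
\[
   \operatorname{ch}L(w\circ(-\Lambda_0))
   =\sum_{y}\varepsilon(w)\varepsilon(y)\,Q_{\ldots}(1)\,\operatorname{ch}M(y\circ(-\Lambda_0)).
\]
Here the sum runs over minimal coset representatives for the stabilizer $\langle s_0\rangle$. Multiplying by $R_{\widehat{\mathfrak g}}$ turns each Verma character into the monomial $e^{y(A_\ell)}$. The sign $\varepsilon(w)$ cancels, so $\varepsilon(w_i)R_{\widehat{\mathfrak g}}\operatorname{ch}L(\lambda_i)$ becomes a signed sum of orbit monomials with coefficients given by inverse KL polynomials at $v=1$. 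This is exactly the formula for the image of $D_{w_i}$ under the canonical map from $\mathcal H^\vee_{\mathrm{aff}}|_{v=1}$ to the completed module of functions on the singular orbit $\widehat W A_\ell$.

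I expect the main obstacle to be making this identification exact.

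\begin{itemize}
\item \textbf{Conventions must match.} The KL character formula (Kashiwara--Tanisaki at negative level, with singular stabilizer $\langle s_0\rangle$) must use the same conventions as the dual-basis definition of \cite[Section~2.1]{ShanYanZhao2026}. In particular the parabolic-singular version must apply with the inverse polynomials, and the signs and the inversion $w\leftrightarrow w^{-1}$ must agree.
\item \textbf{Use the subregular calculation.} I would rely on the explicit subregular inverse KL coefficients of \cite{BezrukavnikovKacKrylov2024}. They let one check the formula directly for the $\ell+1$ elements $w_i$, instead of invoking the general theorem blindly.
\item \textbf{Injectivity on the cell side.} The canonical dual-cell map must also be injective on the span of the $D_{w_i}$. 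This follows from triangularity: the leading monomial $e^{w_i A_\ell}$ appears with coefficient $\pm1$, and these monomials are distinct.
\end{itemize}

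Once both maps are injective and they agree on bases, $\Phi_\ell$ is exactly the composite of $\chi^{\pm}$ with the inverse of the cell embedding on its image. This gives the concrete realization, and it makes no use of a transported ambient $\widehat W$-action.
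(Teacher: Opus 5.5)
Your proposal is correct, and it has the same architecture as the paper. $\Phi_\ell$ is just the basis correspondence between Corollary~\ref{cor:K0-devissage} and \eqref{eq:cell-module-basis}, and the realization is the factorization $\operatorname{SNCh}_\ell=\Theta_{A_\ell}|_{\mathcal C_\ell}\circ\Phi_\ell$.

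The difference is in what is proved and what is imported. The paper does not re-derive the identity $\Theta_{A_\ell}(D_y)=\varepsilon(y)R_{\widehat{\mathfrak g}}\operatorname{ch}L(y\circ(-\Lambda_0))$. It takes from Jain's completed singular-orbit construction a map $\Theta_{A_\ell}$ that is already defined, injective and $\widehat W$-equivariant on all of $\mathcal Q^\vee_{s_0}(1)$. Injectivity of the signed character map and $\widehat W$-stability of its image then follow at once from the factorization.

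You instead propose two things:
\begin{itemize}
\item verify the identity by hand for the $\ell+1$ elements $w_i$, using the inverse Kazhdan--Lusztig character formula with the BKK coefficients;
\item obtain injectivity from highest-weight triangularity, as in Proposition~\ref{prop:ambient-K0-injectivity}.
\end{itemize}
This is more self-contained and suffices for the statement as posed. However, it only produces a map on the span of the $D_{w_i}$. To pull back a $\widehat W$-action, as the paper does immediately afterwards, you would still need to define your cell-side map on $\mathcal Q^\vee_{s_0}(1)$ (or show that a map defined on $\mathcal H^\vee_{\mathrm{aff}}|_{v=1}$ factors through it) and prove it equivariant.

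Two points for your convention check:
\begin{itemize}
\item The relevant Kashiwara--Tanisaki theorem is the positive-level one, since $k+h^\vee=2\ell-3>0$. The negative-level theorem ($k+h^\vee<0$) involves ordinary rather than inverse polynomials. Your choice of inverse polynomials is right; only the label is off.
\item Sum over all $x\in\widehat W$ with coefficients $\varepsilon(x)m^x_{w_i}$, as in \eqref{eq:BKK-unfolded-character}. This avoids deciding which parabolic coefficient to attach to a minimal coset representative, an issue your ``$Q_{\ldots}(1)$'' leaves open.
\end{itemize}
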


\begin{proof}
Corollary~\ref{cor:K0-devissage} gives the basis
$[L(\lambda_0)],\ldots,[L(\lambda_\ell)]$ on the source, while
\eqref{eq:cell-module-basis} gives the basis
$D_{w_0},\ldots,D_{w_\ell}$ on the target.  Since
$\lambda_i=w_i\circ(-\Lambda_0)$, the displayed basis correspondence
therefore defines the unique isomorphism $\Phi_\ell$.
\end{proof}

To realize $\Phi_\ell$ inside a module carrying an independently defined
$\widehat W$-action, set
$$
   \mathcal Q^\vee_{s_0}(1)
   :=
   \left.
   \left(
      \mathcal H^\vee_{\mathrm{aff}}
      /\mathcal H^\vee_{\mathrm{aff},\not\le_L s_0}
   \right)
   \right|_{v=1}.
$$
The canonical cell inclusion
\cite[Equation~(2.1.2)]{ShanYanZhao2026} embeds
$\mathcal C_\ell$ as a $\widehat W$-submodule of
$\mathcal Q^\vee_{s_0}(1)$.  The completed singular-orbit construction of
\cite[Proposition~6.6 and Theorem~6.7]{Jain2026} gives an injective
$\widehat W$-equivariant homomorphism
\begin{equation}
\label{eq:theta-singular-orbit}
   \Theta_{A_\ell}:\mathcal Q^\vee_{s_0}(1)
   \lhook\joinrel\longrightarrow \widehat X_{A_\ell},
\end{equation}
where $A_\ell=-\Lambda_0+\widehat\rho$, satisfying
\begin{equation}
\label{eq:theta-signed-character}
   \Theta_{A_\ell}(D_y)
   =
   \varepsilon(y)R_{\widehat{\mathfrak g}}\,
   \operatorname{ch}L\!\left(y\circ(-\Lambda_0)\right)
   \qquad (y\le_L s_0).
\end{equation}
This construction is Hecke-theoretic and uses the singular inverse
Kazhdan--Lusztig character formula of
\cite{BezrukavnikovKacKrylov2024}.

Define the signed normalized-character homomorphism on the vertex-algebra
block by
\begin{equation}
\label{eq:signed-normalized-character-map}
\begin{aligned}
   \operatorname{SNCh}_\ell:
   K_0\mathcal A_\ell&\longrightarrow \widehat X_{A_\ell},\\
   [L(\lambda_i)]&\longmapsto
   \varepsilon(w_i)R_{\widehat{\mathfrak g}}\,
   \operatorname{ch}L(\lambda_i).
\end{aligned}
\end{equation}
Then \eqref{eq:theta-signed-character} and
\eqref{eq:final-SYZ-K0} give the factorization
\begin{equation}
\label{eq:SNCh-factorization}
   \operatorname{SNCh}_\ell
   =
   \left.\Theta_{A_\ell}\right|_{\mathcal C_\ell}
   \circ\Phi_\ell.
\end{equation}
Consequently $\operatorname{SNCh}_\ell$ is injective and
\begin{equation}
\label{eq:SNCh-image}
   \operatorname{Im}(\operatorname{SNCh}_\ell)
   =\Theta_{A_\ell}(\mathcal C_\ell),
\end{equation}
which is $\widehat W$-stable.  Pulling this independently defined action
back through the injective map $\operatorname{SNCh}_\ell$ equips
$K_0\mathcal A_\ell$ with the signed-character cell action.  For this
action, $\Phi_\ell$ is $\widehat W$-equivariant by
\eqref{eq:SNCh-factorization}.

\begin{remark}[Scope of the affine-Weyl-group statement]
\label{rem:functorial-action-scope}
Section~5.2 of \cite{ShanYanZhao2026} records the ambient abelian-group
identification and states that it can be made $W_{\mathrm{aff}}$-linear by
affine twisting functors or Kashiwara--Tanisaki localization; the details of
that subsection are explicitly deferred there to future work.  The proof
above does not use that deferred functorial assertion.  Its
$\widehat W$-action is the action characterized by the injective signed
normalized-character realization \eqref{eq:signed-normalized-character-map}
and the canonical dual-cell action.

Accordingly, Theorem~\ref{thm:final-SYZ-K0} proves the simple/cell basis
correspondence and the underlying dual-cell module realization predicted by
\cite[Conjecture~5.2.1]{ShanYanZhao2026}.  We do not claim here that the
signed-character action coincides with an action induced by a specified
family of twisting or localization functors on
$\mathcal O_{-\Lambda_0}(L_{-1}(D_\ell))$, nor that those functors preserve
this vertex-algebra subcategory.  Such a naturality statement requires a
separate comparison theorem.
\end{remark}

\begin{remark}[Square-root convention]
\label{rem:qhalf-sign}
The notation $q=1$ means the specialization $v=q^{1/2}\mapsto1$.  The
involution defining the dual Hecke action in
\cite[Equation~(2.1.1)]{ShanYanZhao2026} sends $v$ to $-v$.
Accordingly the Grothendieck-group basis correspondence is
$[L(y\circ(-\Lambda_0))]\leftrightarrow D_y$, whereas the completed
character realization is
$
D_y\mapsto\varepsilon(y)R_{\widehat{\mathfrak g}}
\operatorname{ch}L(y\circ(-\Lambda_0))
$
as in \eqref{eq:theta-signed-character}.  This accounts for the factor
$\varepsilon(w_i)$ in Corollary~\ref{cor:uniform-characters}.
\end{remark}

\subsection{Closed character formulas}

Let $\mathbf K$ be the affine central element,
$a_i=\langle\Lambda_i,\mathbf K\rangle$, so
$$
   a_0=a_1=a_{\ell-1}=a_\ell=1,
   \qquad
   a_2=\cdots=a_{\ell-2}=2.
$$
Let $R_{\widehat{\mathfrak g}}$ be the affine Weyl denominator, let
$t_\gamma$ denote translation by $\gamma\in Q(D_\ell)$, put
$A_\ell=N\Lambda_0+\rho$, and set
$$
   b_i(u,\gamma)
   :=\langle\Lambda_i,u\theta^\vee\rangle
      -a_i\bigl((\theta,\gamma)+1\bigr).
$$
All double sums below are interpreted in the completed group algebra
underlying $\widehat X_{A_\ell}$ in
\eqref{eq:theta-singular-orbit}.

\begin{corollary}[Uniform characters]
\label{cor:uniform-characters}
For $0\le i\le\ell$,
\begin{equation}
\label{eq:uniform-character}
   R_{\widehat{\mathfrak g}}\,
   \operatorname{ch}L\!\left(w_i\circ(-\Lambda_0)\right)
   =
   \frac{\varepsilon(w_i)}2
   \sum_{u\in W(D_\ell)}\varepsilon(u)
   \sum_{\gamma\in Q(D_\ell)}
   b_i(u,\gamma)e^{u t_\gamma A_\ell}.
\end{equation}
\end{corollary}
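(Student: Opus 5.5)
We plan to derive the formula from the signed-character realization \eqref{eq:theta-signed-character}, using the subregular inverse Kazhdan--Lusztig polynomials of Bezrukavnikov--Kac--Krylov. By \eqref{eq:theta-signed-character} with $y=w_i$,
$$
   R_{\widehat{\mathfrak g}}\operatorname{ch}L(w_i\circ(-\Lambda_0))
   =\varepsilon(w_i)\,\Theta_{A_\ell}(D_{w_i}),
$$
so it suffices to compute $\Theta_{A_\ell}(D_{w_i})$ in $\widehat X_{A_\ell}$. We expand $D_{w_i}$, in the quotient $\mathcal Q^\vee_{s_0}(1)$, in the standard basis $\{T_y\}$ with $y$ ranging over minimal coset representatives for $\widehat W/\langle s_0\rangle$. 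The coefficients are the singular inverse Kazhdan--Lusztig polynomials at $v=1$. We then use the defining property of $\Theta_{A_\ell}$ on standard elements: $T_y\mapsto\varepsilon(y)e^{yA_\ell}$, which is the normalized Verma character. This gives
$$
   \Theta_{A_\ell}(D_{w_i})=\sum_y m_{w_i,y}(1)\,\varepsilon(y)\,e^{yA_\ell}.
$$

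Next we insert the subregular inverse KL calculation. For the left cell containing $s_0$, Bezrukavnikov--Kac--Krylov show that the inverse KL polynomial at $v=1$, evaluated at $y=u t_\gamma$ with $u\in W$ and $\gamma\in Q^\vee=Q(D_\ell)$, is an affine-linear function of $(u,\gamma)$. We will translate their formula into our coordinates and identify it with $b_i(u,\gamma)$. It is made of two pieces:
\begin{itemize}
\item the term $\langle\Lambda_i,u\theta^\vee\rangle$, recording the node-$i$ coordinate of the coroot $u\theta^\vee$;
\item the term $-a_i((\theta,\gamma)+1)$, coming from the level pairing.
\end{itemize}
For $i=0$ this is the familiar Kac--Wakimoto-type vacuum formula at a boundary-like level. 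For $i\ge1$, the path word $z_i$ shifts the node whose fundamental weight enters the formula.

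To pass from cosets of $\langle s_0\rangle$ to all of $W\ltimes Q$, we sum over the full group $\{u t_\gamma\}$. Each $A_\ell$-orbit point is hit twice, because $s_0$ stabilizes $A_\ell$ and $s_0=s_\theta t_{-\theta^\vee}$ (up to convention). We must check that the summand $\varepsilon(u)b_i(u,\gamma)$ is invariant under this stabilizer involution, so that summing over all pairs gives exactly twice the coset sum. This produces the factor $\tfrac12$. The invariance amounts to an identity: $\langle\Lambda_i,u\theta^\vee\rangle-a_i(\theta,\gamma)$ transforms compatibly under $u\mapsto us_\theta$, $\gamma\mapsto s_\theta\gamma-\theta$. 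This identity uses $\langle\Lambda_i,\theta^\vee\rangle=a_i$ for $i\ge1$, together with the $+1$ shift.

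The main obstacle is the dictionary between Bezrukavnikov--Kac--Krylov's formula and ours. We must match sign conventions ($v\mapsto -v$ per Remark~\ref{rem:qhalf-sign}, and left versus right cells), the choice of translation-lattice parametrization, and the normalization of $\Theta_{A_\ell}$ on standard elements. We will first verify the case $i=0$ by matching lowest terms against the known vacuum character. We will then check $i=2$ by a direct comparison of the leading coefficient of $e^{A_\ell}$ and of $e^{s_2A_\ell}$-type terms, to fix all signs before stating the general case.
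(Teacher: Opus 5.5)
Your outer framework matches the paper's argument read backwards. You expand in standard elements, obtain one coefficient per $\langle s_0\rangle$-coset, and unfold to all $ut_\gamma$ at the cost of a factor $\tfrac12$. Your invariance check under $(u,\gamma)\mapsto(us_\theta,s_\theta\gamma-\theta)$ is correct; it needs only $s_\theta\theta=-\theta$ and $\lVert\theta\rVert^2=2$, not $\langle\Lambda_i,\theta^\vee\rangle=a_i$, and it holds for $i=0$ as well.

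The gap is the step you postpone as ``translate their formula and identify it with $b_i$''. That step \emph{is} the corollary, and you misdescribe its input. The Bezrukavnikov--Kac--Krylov formula the paper uses, their Equation~(31), evaluates \emph{regular} inverse Kazhdan--Lusztig coefficients, and only at the shortest representative $w_\delta$ of a coset $t_\delta W$:
\[
   m^{w_\delta}_{w_i}=\Bigl\langle\Lambda_i,\,-\delta+\tfrac{\lVert\delta\rVert^2}{2}\mathbf K\Bigr\rangle .
\]
This is quadratic in $\delta$. An affine-linear coefficient on $\langle s_0\rangle$-cosets is the output of a computation, not an input you can quote.

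Three ingredients are missing from your proposal.

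\emph{The coefficient is a difference.} Once the fibre $\{x,xs_0\}$ is collapsed, the coefficient of $e^{xA_\ell}$ is $\varepsilon(xw_i)\bigl(m^x_{w_i}-m^{xs_0}_{w_i}\bigr)$, a difference of two regular coefficients. The paper gets this by pairing terms in the unfolded identity $R_{\widehat{\mathfrak g}}\operatorname{ch}L(w_i\circ(-\Lambda_0))=\sum_{x\in\widehat W}\varepsilon(xw_i)m^x_{w_i}e^{xA_\ell}$, which comes from BKK Proposition~3.6, Section~3.7 and Lemma~A.3.

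\emph{Coset reduction.} To apply (31) at an arbitrary $x=ut_\gamma$ you need $m^x_{w_i}=m^{w_\delta}_{w_i}$ for $x\in w_\delta W$. The paper proves this in the antispherical module using BKK Proposition~A.6, together with the fact (BKK Corollary~5.2) that each $w_i$ is itself shortest in its right $W$-coset. Since $x\in t_{u\gamma}W$ and $xs_0\in t_{u(\gamma+\theta)}W$, one then gets
\[
   m^x_{w_i}-m^{xs_0}_{w_i}
   =\langle\Lambda_i,u\theta^\vee\rangle
    -\tfrac{a_i}{2}\bigl(\lVert\gamma+\theta\rVert^2-\lVert\gamma\rVert^2\bigr)
   =b_i(u,\gamma).
\]
The quadratic terms cancel via $\lVert\gamma+\theta\rVert^2-\lVert\gamma\rVert^2=2(\theta,\gamma)+2$. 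This cancellation is where both the affine-linearity and the $+1$ come from.

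\emph{Signs.} Matching the $i=0$ and $i=2$ leading terms can catch a sign error but does not prove the formula for all $i$. The paper fixes signs once, using:
\begin{itemize}
\item the BKK labelling $L_{v_i}=L(w_i\circ(-\Lambda_0))$ with $v_i=w_i^{-1}$ maximal in its left $\langle s_0\rangle$-coset;
\item $\varepsilon(xs_0w_i)=-\varepsilon(xw_i)$;
\item $\varepsilon(t_\gamma)=1$.
\end{itemize}

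Finally, your starting point $\Theta_{A_\ell}(T_y)=\varepsilon(y)e^{yA_\ell}$ is not recorded in the paper, which gives $\Theta_{A_\ell}$ only on the $D_y$. Routing through $\Theta_{A_\ell}$ therefore adds an unverified assumption. Going directly to BKK Proposition~3.6, as the paper does, avoids it.
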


\begin{proof}
We first fix the inverse and coset conventions.  Write
$m^x_y:=m^x_y(1)$ for the inverse Kazhdan--Lusztig coefficients in the
normalization of \cite[Appendix~A]{BezrukavnikovKacKrylov2024}.  In the singular block with
$\lambda=-\Lambda_0$ and
$A_\ell=\lambda+\widehat\rho$, one has
$W_{A_\ell}=\langle s_0\rangle$.  The hypotheses of the singular character formula are satisfied here:
the level is $-1>-h^\vee$, and
$A_\ell=-\Lambda_0+\widehat\rho$ is dominant integral with stabilizer
$\langle s_0\rangle$.  For each $i$, set $v_i=w_i^{-1}$.  Since the
unique reduced word for $w_i$ ends in $s_0$, the element $v_i$ is the
maximal representative of its left $\langle s_0\rangle$-coset, exactly as
required in \cite[Proposition~3.6]{BezrukavnikovKacKrylov2024}; moreover
$L_{v_i}=L(v_i^{-1}\circ(-\Lambda_0))=L(w_i\circ(-\Lambda_0))$ in their
notation.  Applying that proposition and the character rewriting of
\cite[Section~3.7]{BezrukavnikovKacKrylov2024}, then using the inversion
symmetry \cite[Lemma~A.3]{BezrukavnikovKacKrylov2024}, gives the completed
formal character identity
\begin{equation}
\label{eq:BKK-unfolded-character}
 R_{\widehat{\mathfrak g}}\,
 \operatorname{ch}L\!\left(w_i\circ(-\Lambda_0)\right)
 =\sum_{x\in\widehat W}
   \varepsilon(xw_i)m^x_{w_i}e^{xA_\ell}.
\end{equation}
Equivalently, this is obtained from the singular formula by unfolding the
finite stabilizer $W_{A_\ell}$.  Notice that the inverse in the BKK labeling
is essential here: setting $y=w_i=v^{-1}$ converts their
$L_v=L(v^{-1}\circ\lambda)$ convention into the module occurring in
\eqref{eq:BKK-unfolded-character}.

We next justify the reduction of an arbitrary inverse-Kazhdan--Lusztig
coefficient to a shortest finite-Weyl-coset representative.  Write
$x=w_\delta v$, where $w_\delta$ is the shortest element of
$t_\delta W$ and $v\in W$.  By
\cite[Corollary~5.2]{BezrukavnikovKacKrylov2024}, each of the elements
$w_i$ used here is of the form $w_{\nu_i}$, hence is itself the shortest
representative of its right $W$-coset.  In the anti-spherical module,
$\phi(H_x)=\varepsilon(v)H'_{w_\delta}$.  Comparing the
$C'_{w_i}$-coefficients and using
\cite[Proposition~A.6]{BezrukavnikovKacKrylov2024} gives
$$
 \varepsilon(xw_i^{-1})m^x_{w_i}
 =\varepsilon(v)\varepsilon(w_\delta w_i^{-1})
   m^{w_\delta}_{w_i}.
$$
Since $\varepsilon(x)=\varepsilon(w_\delta)\varepsilon(v)$, the signs
cancel, and therefore
\begin{equation}
\label{eq:inverse-KL-coset-reduction}
   m^x_{w_i}=m^{w_\delta}_{w_i}.
\end{equation}

Because $s_0A_\ell=A_\ell$, the two terms indexed by $x$ and $xs_0$
in \eqref{eq:BKK-unfolded-character} have the same exponential, while
$\varepsilon(xs_0w_i)=-\varepsilon(xw_i)$.  Thus the contribution of the
unordered pair $\{x,xs_0\}$ is
$$
   \varepsilon(xw_i)
   \bigl(m^x_{w_i}-m^{xs_0}_{w_i}\bigr)e^{xA_\ell}.
$$
The stabilizer has order two, and in fact
$\operatorname{Stab}_{\widehat W}(A_\ell)=\langle s_0\rangle$.  Hence for
each orbit exponent $e^{xA_\ell}$, the complete fiber of the map
$y\mapsto yA_\ell$ is exactly $\{x,xs_0\}$; there are no additional
terms with the same exponent hidden elsewhere in the sum.  Summing the last
expression over every $x$ therefore counts each unordered pair twice, and
the pairing is coefficientwise legitimate in the completed orbit module.
Therefore
\begin{equation}
\label{eq:BKK-paired-character}
 2R_{\widehat{\mathfrak g}}\,
 \operatorname{ch}L\!\left(w_i\circ(-\Lambda_0)\right)
 =\sum_{x\in\widehat W}
   \varepsilon(xw_i)
   \bigl(m^x_{w_i}-m^{xs_0}_{w_i}\bigr)e^{xA_\ell}.
\end{equation}

Now write uniquely $x=ut_\gamma=t_{u\gamma}u$, with
$u\in W$ and $\gamma\in Q(D_\ell)$.  Thus the right $W$-coset of
$x$ is $t_{u\gamma}W$.  Since
$s_0=t_\theta s_\theta$, the right $W$-coset of $xs_0$ is
$t_{u(\gamma+\theta)}W$.  Applying
\eqref{eq:inverse-KL-coset-reduction} and then
\cite[Equation~(31)]{BezrukavnikovKacKrylov2024} therefore yields
$$
\begin{aligned}
 m^x_{w_i}-m^{xs_0}_{w_i}
 &=\left\langle\Lambda_i,
       -u\gamma+\frac{\lVert\gamma\rVert^2}{2}\mathbf K
   \right\rangle
   -\left\langle\Lambda_i,
       -u(\gamma+\theta)
       +\frac{\lVert\gamma+\theta\rVert^2}{2}\mathbf K
   \right\rangle\\
 &=\langle\Lambda_i,u\theta^\vee\rangle
   -a_i\bigl((\theta,\gamma)+1\bigr)
  =b_i(u,\gamma),
\end{aligned}
$$
where $Q^\vee=Q$, $\theta^\vee=\theta$, and
$\lVert\theta\rVert^2=2$.  Finally,
$\varepsilon(xw_i)=\varepsilon(u)\varepsilon(w_i)$.  Indeed, the sign
character is the determinant of the finite linear part of an affine Weyl
transformation; a translation has identity linear part, so
$\varepsilon(t_\gamma)=1$.  Substituting the last identity into
\eqref{eq:BKK-paired-character} gives exactly
\eqref{eq:uniform-character}.

As a convention check, when $i=0$ one has
$w_0=s_0$, $a_0=1$, and
$\langle\Lambda_0,u\theta^\vee\rangle=0$; thus the two minus signs cancel
and the formula reduces to
$$
 R_{\widehat{\mathfrak g}}\operatorname{ch}L(-\Lambda_0)
 =\frac12\sum_{u\in W}\varepsilon(u)
   \sum_{\gamma\in Q(D_\ell)}
   \bigl((\theta,\gamma)+1\bigr)e^{ut_\gamma A_\ell}.
$$
In particular, the vacuum coefficient has the expected positive sign; this is a
useful check on the inversion, sign, and $s_0$-pairing conventions above.
\end{proof}

\appendix
\section{Type-D root calculations}
\label{app:type-D-calculations}

This appendix records the coordinate calculations used in
Sections~\ref{sec:membership} and \ref{sec:exhaustion}.  Put
$r=\ell-2$, use orthogonal coordinates
$\varepsilon_1,\ldots,\varepsilon_\ell$, and work in Premet's minimal
root model
$$
   \beta=\alpha_2=\varepsilon_2-\varepsilon_3.
$$
The roots orthogonal to $\beta$ form $A_1\sqcup D_r$.  Take
$$
   \phi=\varepsilon_2+\varepsilon_3,
   \qquad
   \varpi_\beta=\frac{\phi}{2},
$$
and for the $D_r$-factor use
\begin{equation}
   \delta_1=\varepsilon_1,
   \qquad
   \delta_j=\varepsilon_{j+2}\quad(2\le j\le r).
\end{equation}
For $r\ge4$, its fundamental weights are
$$
   \eta_j=\delta_1+\cdots+\delta_j\quad(1\le j\le r-2),
$$
$$
   \eta_{r-1}=\tfrac12(\delta_1+\cdots+\delta_{r-1}-\delta_r),
   \qquad
   \eta_r=\tfrac12(\delta_1+\cdots+\delta_{r-1}+\delta_r).
$$
We transport $\varpi_\beta$ to the $A_1$-weight $\varpi$ in the
highest-root model.

\subsection{Premet's shift}

Premet's shift is the restriction of
$\delta=\frac12\sum_{\gamma\in\Phi^+_{-1}}\gamma$.  The positive roots
with $\langle\gamma,\beta^\vee\rangle=-1$ are
$$
   \varepsilon_1-\varepsilon_2,
   \quad
   \varepsilon_1+\varepsilon_3,
   \quad
   \varepsilon_3\pm\varepsilon_j\quad(4\le j\le\ell).
$$
Set
$$
 x_*^\beta=\varepsilon_1+\frac{\varepsilon_2+\varepsilon_3}{2}.
$$
The positive $A_1\oplus D_r$-roots have $x_*^\beta$-charges in
$\{0,1\}$, while the three displayed root families have charges
$\frac12,\frac32,\frac12$, respectively.  Since
$\beta(x_*^\beta)=0$, the corresponding positive roots
$\beta+\gamma\in\Phi^+_{e,1}$ have the same charges.
Their sum is
$$
   2\varepsilon_1-\varepsilon_2+(2\ell-5)\varepsilon_3,
$$
so
$$
   \delta=
   \varepsilon_1-\frac12\varepsilon_2
   +\frac{2\ell-5}{2}\varepsilon_3.
$$
Pairing with $\phi^\vee$ gives $\ell-3=r-1$, while the
$D_r$-restriction is $\delta_1=\eta_1$.  Therefore
\begin{proposition}[Premet shift in type $D$]
\begin{equation}
\label{eq:appendix-D-Premet-shift}
   \bar\delta=(r-1)\varpi+\eta_1.
\end{equation}
\end{proposition}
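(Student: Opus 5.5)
The plan is to compute $\bar\delta$ directly from Premet's definition, in the $\beta=\alpha_2$ model and for the positive system fixed by the charge element $x_*^\beta$. The shift is the restriction to $\mathfrak h^\natural=\beta^\perp$ of
$$\delta=\tfrac12\sum_{\gamma\in\Phi^+_{-1}}\gamma,$$
where $\Phi^+_{-1}$ is the set of roots with $\langle\gamma,\beta^\vee\rangle=-1$ lying in the chosen positive half. The argument has three steps: enumerate $\Phi^+_{-1}$, add up its elements, and project the sum onto the $A_1$ and $D_r$ factors.

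\textbf{Step 1: enumerate $\Phi^+_{-1}$.} Since $\beta=\varepsilon_2-\varepsilon_3$, a root $\gamma=\pm\varepsilon_a\pm\varepsilon_b$ has $\langle\gamma,\beta^\vee\rangle=-1$ exactly when its $\varepsilon_2$-coefficient minus its $\varepsilon_3$-coefficient equals $-1$. This gives the following roots:
\begin{itemize}
\item $\varepsilon_1-\varepsilon_2$, $\ -\varepsilon_1-\varepsilon_2$, $\ -\varepsilon_2\pm\varepsilon_j$ for $j\ge4$;
\item $\varepsilon_1+\varepsilon_3$, $\ -\varepsilon_1+\varepsilon_3$, $\ \varepsilon_3\pm\varepsilon_j$ for $j\ge4$.
\end{itemize}
Next I pair each of these with $x_*^\beta=\varepsilon_1+\tfrac12(\varepsilon_2+\varepsilon_3)$. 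The families $\varepsilon_1-\varepsilon_2$, $\varepsilon_1+\varepsilon_3$ and $\varepsilon_3\pm\varepsilon_j$ have charges $\tfrac12,\tfrac32,\tfrac12$. The remaining families all have negative charge ($-\tfrac12$ or $-\tfrac32$). The positive half is therefore exactly the three listed families, which is consistent with the charge bookkeeping used in Proposition~\ref{prop:vacuum-finite-W-highest}.

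\textbf{Step 2: sum the roots.} The $\varepsilon_j$-terms with $j\ge4$ cancel in each $\pm$ pair. The $\varepsilon_3$-coefficient collects $1$ from $\varepsilon_1+\varepsilon_3$ and $2(\ell-3)$ from the $\varepsilon_3\pm\varepsilon_j$ family. This gives
$$\sum_{\gamma\in\Phi^+_{-1}}\gamma=2\varepsilon_1-\varepsilon_2+(2\ell-5)\varepsilon_3,
\qquad
\delta=\varepsilon_1-\tfrac12\varepsilon_2+\tfrac{2\ell-5}{2}\varepsilon_3 .$$

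\textbf{Step 3: project onto $A_1\oplus D_r$.} The orthogonal projection to $\beta^\perp$ has two components.
\begin{itemize}
\item For the $A_1$-component I pair with $\phi^\vee=\varepsilon_2+\varepsilon_3$, obtaining $-\tfrac12+\tfrac{2\ell-5}{2}=\ell-3=r-1$. Since $\langle\varpi_\beta,\phi^\vee\rangle=1$, this component is $(r-1)\varpi$.
\item For the $D_r$-component I read off the coefficients in the basis $\delta_1=\varepsilon_1$, $\delta_j=\varepsilon_{j+2}$. The result is $\delta_1=\eta_1$.
\end{itemize}
Together these give $\bar\delta=(r-1)\varpi+\eta_1$.

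The main point needing care is Step~1: one must confirm that Premet's positive system really is the one cut out by $x_*^\beta$, so that the correct half of the $\langle\gamma,\beta^\vee\rangle=-1$ roots is summed. The check is that the positive $A_1\oplus D_r$ roots have $x_*^\beta$-charges in $\{0,1\}$, matching the positive system of Appendix~\ref{app:type-D-calculations}. For $r=3$ the same computation applies verbatim, read under $D_3\cong A_3$.
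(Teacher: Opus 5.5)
Your proposal is correct and follows the paper's own computation. You select the half of $\{\gamma:\langle\gamma,\beta^\vee\rangle=-1\}$ with positive $x_*^\beta$-charge and sum it to get $\delta=\varepsilon_1-\tfrac12\varepsilon_2+\tfrac{2\ell-5}{2}\varepsilon_3$; pairing with $\phi^\vee$ then gives $r-1$, and reading off the $D_r$-part gives $\delta_1=\eta_1$, exactly as the paper does. Your explicit listing of the discarded negative-charge roots, which are also exactly the ones negative in the standard Bourbaki ordering, makes the choice of half slightly more transparent than the paper's version.
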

This proves Lemma~\ref{lem:Premet-shift-D}.

\subsection{Inverse path words and regular node weights}

Let $y_i=z_i^{-1}$, as fixed in Section~\ref{sec:exhaustion}; the
inverse is the one that has right descent $s_i$ and hence labels the node
ideal $J_i$.  Applying it to
$\rho=(\ell-1,\ell-2,\ldots,1,0)$ gives, with $\nu_i=y_i\rho-\rho$,
\begin{align}
 \nu_1&=-\varepsilon_1-\varepsilon_2+2\varepsilon_3,
 &\nu_2&=-\varepsilon_2+\varepsilon_3,\label{eq:appendix-D-nu12}\\
 \nu_i&=-(i-1)\varepsilon_2+
        \varepsilon_3+\cdots+\varepsilon_{i+1}
 &&(3\le i\le\ell-2),\\
 \nu_{\ell-1}&=-r\varepsilon_2+
       \varepsilon_3+\cdots+\varepsilon_\ell,\\
 \nu_\ell&=-r\varepsilon_2+
       \varepsilon_3+\cdots+\varepsilon_{\ell-1}-\varepsilon_\ell.
       \label{eq:appendix-D-nusp2}
\end{align}
Premet's comparison theorem gives
$$
   \lambda_i^H=\bar\delta+\overline{\nu_i}.
$$
For an ambient weight $\sum c_j\varepsilon_j$, its $A_1$-coefficient
in units of $\varpi$ is $c_2+c_3$, while its $D_r$-restriction keeps
coordinates $c_1,c_4,\ldots,c_\ell$.  Substitution yields the following.

\begin{proposition}[Node-weight calculation]
\begin{align}
 \lambda_1^H&=r\varpi,
 &\lambda_2^H&=(r-1)\varpi+\eta_1,\\
 \lambda_i^H&=(r+1-i)\varpi+\eta_{i-1}
 &&(3\le i\le r-1),\\
 \lambda_{\ell-2}^H&=\varpi+\eta_{r-1}+\eta_r,\\
 \lambda_{\ell-1}^H&=2\eta_r,
 &\lambda_\ell^H&=2\eta_{r-1}.
\end{align}
For the highest root $\theta_D=\delta_1+\delta_2$,
$$
   \langle(\lambda_2^H)_D,\theta_D^\vee\rangle=1,
   \qquad
   \langle(\lambda_i^H)_D,\theta_D^\vee\rangle=2
   \quad(3\le i\le\ell).
$$
\end{proposition}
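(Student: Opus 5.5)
The plan is a direct coordinate substitution: compute each $\nu_i$, add $\delta$, project, and read off the $A_1$- and $D_r$-parts. I will take the shift $\delta=\varepsilon_1-\tfrac12\varepsilon_2+\tfrac{2\ell-5}{2}\varepsilon_3$ from the preceding subsection as given, and use the restriction rule stated just before the proposition. Under that rule a weight $\sum c_j\varepsilon_j$ has $A_1$-coefficient $c_2+c_3$ (in units of $\varpi$), and its $D_r$-part is $c_1\delta_1+\sum_{j\ge2}c_{j+2}\delta_j$.

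The first step is to confirm the list \eqref{eq:appendix-D-nu12}--\eqref{eq:appendix-D-nusp2}. Since $y_i=z_i^{-1}=s_2s_3\cdots s_i$ (or $s_2s_1$, respectively $s_2s_3\cdots s_{\ell-2}s_\ell$), I will compute $y_i\rho$ by applying the reflections right to left to $\rho=(\ell-1,\ldots,1,0)$. The first $i-2$ reflections $s_i,\ldots,s_3$ cyclically move the entry $\ell-2$ from position $2$ to position $i+1$ and shift the intermediate entries up by one position. The final $s_2$ then swaps positions $2$ and $3$. Subtracting $\rho$ gives $\nu_i$. The endpoint cases $s_{\ell-1}$ and $s_\ell$ (the latter acting by $\varepsilon_{\ell-1}+\varepsilon_\ell$, which flips signs and produces the $-\varepsilon_\ell$) are checked separately in the same way.

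The second step is the substitution, which I organize by cases.
\begin{itemize}
\item For $i=1$: $\delta+\nu_1=-\tfrac32\varepsilon_2+\tfrac{2\ell-1}{2}\varepsilon_3$. This has $A_1$-coefficient $\ell-2=r$ and trivial $D_r$-part, giving $r\varpi$.
\item For $i=2$: the $A_1$-coefficient is $-\tfrac32+\tfrac{2\ell-3}{2}=r-1$, and the $D_r$-part is $\delta_1=\eta_1$.
\item For $3\le i\le \ell-2$: the $A_1$-coefficient is $-(i-1)-\tfrac12+\tfrac{2\ell-3}{2}=r+1-i$, and the $D_r$-part is $\delta_1+\cdots+\delta_{i-1}$. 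This equals $\eta_{i-1}$ for $i\le r-1$. For $i=r=\ell-2$ it equals $\delta_1+\cdots+\delta_{r-1}=\eta_{r-1}+\eta_r$, with $A_1$-coefficient $1$.
\item For $i=\ell-1$: the $A_1$-coefficient is $-r-\tfrac12+\tfrac{2\ell-3}{2}=0$, and the $D_r$-part $\delta_1+\cdots+\delta_r$ is $2\eta_r$.
\item For $i=\ell$: the $A_1$-coefficient is again $0$, and the $D_r$-part $\delta_1+\cdots+\delta_{r-1}-\delta_r$ is $2\eta_{r-1}$.
\end{itemize}

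Finally, pairing with $\theta_D^\vee=\delta_1+\delta_2$ gives the stated values. The weight $\eta_1$ pairs to $1$. Each $\eta_j$ with $2\le j\le r-2$ pairs to $2$. For $r\ge4$, each of $\eta_{r-1}$ and $\eta_r$ pairs to $1$, so $\eta_{r-1}+\eta_r$, $2\eta_r$ and $2\eta_{r-1}$ all pair to $2$.

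The only real obstacle is bookkeeping at the fork. I need the correct sign in the $s_\ell$ reflection, and I must recognize $\delta_1+\cdots+\delta_{r-1}\pm\delta_r$ as twice a spinor weight. For $r=3$ the $\eta$-formulas must instead be read through $D_3\cong A_3$, which I defer to Appendix~\ref{app:rank-five-boundary}.
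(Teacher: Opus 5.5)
Your substitution and pairing steps are correct and match the paper's proof. The paper adds $\bar\delta=(r-1)\varpi+\eta_1$ to the listed $\overline{\nu_i}$, reads off the $A_1$- and $D_r$-parts, and pairs with $\delta_1+\delta_2$, using $\eta_{r-1}+\eta_r=\delta_1+\cdots+\delta_{r-1}$ and $2\eta_r,\,2\eta_{r-1}=\delta_1+\cdots+\delta_{r-1}\pm\delta_r$ at the fork. Every coefficient you compute, from $r$ for node $1$ down to $0$ at the spinor nodes, is right.

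One slip should be fixed: your description of how $y_i\rho$ is computed in the re-derivation of the $\nu_i$ is wrong. The reflections $s_i,\ldots,s_3$ never touch position $2$, so they cannot move $\ell-2$ to position $i+1$. That is the mechanism for $z_i\rho=s_i\cdots s_3s_2\rho$, which produces $\mu_i$ rather than $\nu_i$. Followed literally, your recipe would not reproduce the listed $\nu_i$ for $i\ge3$.

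The correct mechanism, applied right to left to $\rho$, runs as follows. The reflections $s_i,\ldots,s_3$ carry the entry $\ell-i-1$ from position $i+1$ down to position $3$, and shift $\ell-3,\ldots,\ell-i$ one slot to the right. The final $s_2$ then puts $\ell-i-1$ in position $2$ and $\ell-2$ in position $3$. This gives $\nu_i=-(i-1)\varepsilon_2+\varepsilon_3+\cdots+\varepsilon_{i+1}$, as listed.

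The values you actually substitute are these correct ones, and the paper treats the list as established before the proposition. So your conclusion is unaffected once this description is corrected.
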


\begin{proof}
Adding \eqref{eq:appendix-D-Premet-shift} to
\eqref{eq:appendix-D-nu12}--\eqref{eq:appendix-D-nusp2} gives the displayed
weights.  The last assertion follows by pairing the $D_r$-parts with
$\delta_1+\delta_2$; at the branch and spinor endpoints use
$$
   \eta_{r-1}+\eta_r=\delta_1+\cdots+\delta_{r-1},
   \qquad
   2\eta_{r\,\text{ or }\,r-1}
   =\delta_1+\cdots+\delta_{r-1}\pm\delta_r.
$$
\end{proof}

For $r\ge4$, the root lattice is
$$
   Q(D_r)=\left\{\sum n_j\delta_j\;\middle|\;
   n_j\in\mathbb Z,\ \sum n_j\in2\mathbb Z\right\},
$$
and
$$
   Q(D_r)+\mathbb Z\eta_1
   =Q(D_r)\sqcup(\eta_1+Q(D_r)).
$$
This root/vector subgroup is used in the Casimir estimates.  The
boundary case $r=3\cong A_3$ is recorded next.
\section{The rank-five boundary}
\label{app:rank-five-boundary}

When $\ell=5$, one has $r=3$, $N=7$, and $D_3\cong A_3$.
There is no low-rank degeneration in the candidate relations: the second
Kostant component has highest weight $\omega_4+\omega_5$, and
$$
   [s_D]_{\mathrm{DS}}=c_D:J_D^2:\ne0,
$$
so the relation $e_D^2=0$ used in Lemma~\ref{lem:allowed-finite-types}
remains valid \cite[Equation~(32), Proposition~3.6, and Remark~3.15]{Jin2026}.

In orthogonal coordinates $\delta_1,\delta_2,\delta_3$, take
$$
\begin{gathered}
 \beta_1=\delta_1-\delta_2,\quad
 \beta_2=\delta_2-\delta_3,\quad
 \beta_3=\delta_2+\delta_3,\\
 \eta_1=\delta_1,\quad
 \eta_2=\tfrac12(\delta_1+\delta_2-\delta_3),\quad
 \eta_3=\tfrac12(\delta_1+\delta_2+\delta_3).
\end{gathered}
$$
Under $D_3\cong A_3$, these are respectively
$\omega_2,\omega_1,\omega_3$.  Since
$\theta_D=\delta_1+\delta_2$, all three satisfy
$\langle\eta_j,\theta_D^\vee\rangle=1$.  Moreover
$$
\begin{aligned}
 Q(D_3)&=\{n\in\mathbb Z^3\mid n_1+n_2+n_3\in2\mathbb Z\},\\
 Q(D_3)+\mathbb Z\eta_1&=Q(D_3)\sqcup(\eta_1+Q(D_3)).
\end{aligned}
$$
The second line is the order-two subgroup generated by $\omega_2$ in
$P(A_3)/Q(A_3)\cong\mathbb Z/4\mathbb Z$.  Hence the level-one
root/vector-coset argument leaves only $0$ and $\eta_1$; this covers
Lemmas~\ref{lem:zero-root-vector-bound} and \ref{lem:minimal-no-spinor}.

With $\rho_{D_3}=2\delta_1+\delta_2$, one has
$c_2(\eta_1)=5$, $C_{\max}=49/6$, and $C\le4/3<2$, so
Proposition~\ref{prop:minimal-Casimir-gap} holds.  The node weights are
$3\varpi,2\varpi+\eta_1,\varpi+\eta_2+\eta_3,2\eta_3,2\eta_2$: node~1
violates $a\le2$, whereas nodes~3,4,5 have highest-root level~2, proving
Proposition~\ref{prop:node2-rigidity}.  Finally
$s^2\le12a-a^2+50/3-10c\le110/3<49=N^2$, so the zero-orbit argument is
unchanged.  No exceptional modification is therefore required for $D_5$.

\subsection*{The \texorpdfstring{$D_6$}{D6} triality check}
When $\ell=6$, the factor $D_r$ is $D_4$, so one should also check
that triality does not invalidate the root/vector-coset arguments.  In the
fixed embedding coming from the minimal grading, the $G$-generators carry
the vector class $\eta_1$; triality is not being used to relabel this
embedding.  In orthogonal coordinates,
$$
\begin{aligned}
 Q(D_4)
 &=\{n\in\mathbb Z^4:\ n_1+n_2+n_3+n_4\in2\mathbb Z\},\\
 Q(D_4)+\mathbb Z\eta_1
 &=Q(D_4)\sqcup(\eta_1+Q(D_4)).
\end{aligned}
$$
The two spinor classes $\eta_3+Q(D_4)$ and
$\eta_4+Q(D_4)$ have half-integral orthogonal coordinates and are
therefore excluded whenever the proof invokes integrality.  Although
$\eta_1,\eta_3,\eta_4$ have the same level-one Casimir by triality, the
coset argument singles out $\eta_1$ before the Casimir estimate is used,
so the bound in Lemmas~\ref{lem:zero-root-vector-bound} and
\ref{lem:minimal-no-spinor} is unchanged.

For completeness, the regular node weights at $r=4$ are
$$
 4\varpi,\quad 3\varpi+\eta_1,\quad
 2\varpi+\eta_2,\quad
 \varpi+\eta_3+\eta_4,\quad
 2\eta_4,\quad2\eta_3.
$$
The last four have $D_4$-highest-root level $2$, whereas node~1 has
$A_1$-weight $4\varpi$ and violates $a\le3$.  Thus
Proposition~\ref{prop:node2-rigidity} has no $D_6$ triality exception.

\providecommand{\bysame}{\leavevmode\hbox to3em{\hrulefill}\thinspace}
\providecommand{\MR}{\relax\ifhmode\unskip\space\fi MR }
\providecommand{\MRhref}[2]{%
  \href{http://www.ams.org/mathscinet-getitem?mr=#1}{#2}
}
\providecommand{\href}[2]{#2}

\end{document}